\def\squarebox#1{\hbox to #1{\hfill\vbox to #1{\vfill}}}
\newcommand{\qed}{\hspace*{\fill}
\vbox{\hrule\hbox{\vrule\squarebox{.667em}\vrule}\hrule}\smallskip}
\newtheorem{theorem}{Theorem}[section]
\newtheorem{lemma}[theorem]{Lemma}
\newtheorem{proposition}[theorem]{Proposition}
\newtheorem{definition}[theorem]{Definition}
\newenvironment{proof}{\noindent {\bf Proof:}}{\hfill $\qed $ \newline}
\newcommand{\R}{{\mathbb R}}
\newcommand{\Z}{{\mathbb Z}}
\newcommand{\C}{\mathbb{C}}
\newcommand{\F}{\mathbb{F}}
\newcommand{\B}{{\mathbb B}}
\newcommand{\simto}{\stackrel{\sim}{\longrightarrow}}
\renewcommand{\S}{{\mathcal S}}
\renewcommand{\B}{{\mathcal B}}
\DeclareMathOperator{\Ima}{Im}
\newcommand{\ad}{{\rm ad}}
\newcommand{\Ad}{{\rm Ad}}
\newcommand{\Sl}{{\rm Sl}}
\newcommand{\SO}{{\rm SO}}
\newcommand{\Sp}{{\rm Sp}}
\newcommand{\e}{{\rm e}}
\renewcommand{\min}{\mbox{{\rm min}}}
\newcommand{\SU}{{\rm SU}}
\newcommand{\g}{\mathfrak{g}}
\renewcommand{\k}{\mathfrak{k}}
\newcommand{\s}{\mathfrak{s}}
\renewcommand{\a}{\mathfrak{a}}
\newcommand{\m}{\mathfrak{m}}
\newcommand{\n}{\mathfrak{n}}
\renewcommand{\sl}{\mathfrak{sl}}
\newcommand{\q}{\mathfrak{q}}
\begin{document}
\thispagestyle{empty}

\title{Cellular homology of compact groups: \\  Split real forms}

\author{Mauro Patrão \and Ricardo Sandoval}

\maketitle

\begin{abstract}
In this article, we use the Bruhat and Schubert cells to calculate the cellular homology of the maximal compact subgroup $K$ of a connected semisimple Lie group $G$ whose Lie algebra is a split real form. We lift to the maximal compact subgroup the attaching maps for the maximal flag manifold presented in \cite{lonardo} and use it to characterize algebraically the incidence order between Schubert cells. We also present algebraic formulas to compute the boundary maps which extend to the maximal compact subgroups similar formulas obtained in the case of the maximal flag manifolds. Finally, we apply our results to calculate the cellular homology of $\SO(3)$ as the maximal compact subgroup of $\Sl(3, \R)$ and the cellular homology of $\SO(4)$ as the maximal compact subgroup of the split real form $G_2$.
\end{abstract}

\vskip 0.5cm
\noindent
\textbf{Keywords:} \normalfont{Semisimple Lie groups, }

\section{Introduction}

The construction of cellular decompositions of group manifolds and homogeneous spaces is an old theme. For the classical compact Lie groups one can use cells using products of reflections via the method that goes back to Whitehead \cite{whitehead} and was later developed by \cite{yokota1}, \cite{yokota2}. More recently it can also be found in section 3.D of \cite{hatcher}. The degeneration of Spectral sequences that occurs for unitary and sympletic groups fails for the orthogonal groups, because in the analogue of the iterated fiber decomposition of the orthogonal groups one encounters spheres of adjacent dimensions (see section 3.2 of \cite{carlson}).

In \cite{lonardo}, the Bruhat and Schubert cells are used in flag manifolds to divide them into cell complexes to then calculate the cellular homology of generalized real flag manifolds. Formulas for the boundary operator are then found.
The results of \cite{lonardo} were already partially found for flag manifolds by Kocherlakota \cite{kocher}. In the realm of Morse homology, it is proven in Theorem 1.1.4 of \cite{kocher}  that the boundary operator for the Morse-Witten complexes are intimately related to the Bruhat cells, since they are the unstable manifolds of the gradient flow of a Morse function (see Duistermatt-Kolk-Varadarajan \cite{dkv}). Nevertheless the cellular point of view of \cite{lonardo} has the advantage of showing the geometry in a more evident way, in particular, the choice of minimal decompositions for the elements of the Weyl group $W$ fix certain signs which are ambiguous in the Morse-Witten complex. For each element $w \in W$, we have a Bruhat cell $\B(w)$ in the maximal flag manifold $\F$ of a connected semisimple Lie group $G$, and its closure, called Schubert cell, is given by
\[
\S(w) = \coprod_{w'\leq w} \B(w')
\]
where $\leq$ is the so-called Bruhat-Chevalley order.

In this article, we use the Bruhat and Schubert cells to calculate the cellular homology of the maximal compact subgroup $K$ of $G$ whose Lie algebra is a split real form. We follow \cite{lonardo} to first construct the skeleton and then to find algebraic expressions for the degrees of the maps. We apply these results to calculate the cellular homology of $\SO(3)$ as the maximal compact subgroup of $\Sl(3, \R)$ and calculate the homology of $\SO(4)$ as the maximal compact subgroup of the split real form $G_2$ (see \cite{yokota}). Denote by $U$ and $C$ the groups of the connected components of, respectively, the normalizer and the centralizer in $K$ of a maximal abelian subalgebra in the symmetric component of the Cartan decomposition. If $\pi: K \to \F$ is the natural projection, then $\pi(U) = U/C = W$ and, for each simple reflection $r_\alpha \in W$, we can choose a suitable element $s_\alpha \in U$ such that $\pi(s_\alpha) = r_\alpha$ and $s_\alpha^2 = c_\alpha \in C$. Since the simple reflections generate $W$, each element $u \in U$ can be written as $u = s_1 \cdots s_dc$, where $c \in C$ and $\pi(u) = r_1 \cdots r_d$ is a reduced expression. In this case, the length of $\pi(u)$ is given by $\ell(\pi(u)) = d$. For each element $u \in U$, we have a Bruhat cell $\B(u)$ in $K$, and its closure, called Schubert cell, is given by
\[
\S(u) = \coprod_{u'\leq u} \B(u')
\]
where $\leq$ is an order in $U$ characterized by our first main theorem.

\begin{theorem}\label{theo-1}
Let $u, u' \in U$. If $\ell(\pi(u)) = d$ and $\ell(\pi(u')) = d-1$, then $u' < u$ if and only if for some (or equivalently for each) reduced expression $\pi(u) = r_1 \cdots r_d$ we have that $u = s_1 \cdots s_dc$, for some $c \in C$, and that
\[
u' = u_i^0 = s_1 \cdots s_{i-1}s_{i+1} \cdots s_d c
\qquad
\mbox{or}
\qquad
u' = u_i^1 = s_1 \cdots s_{i-1}c_is_{i+1} \cdots s_d c
\]
for some $1 \leq i \leq d$, where $c_i = s_i^2$. Furthermore, $u' < u$ if and only if there exists $u_1,\dots,u_n$ such that $u_1 = u'$, $u_n = u$, $\ell(\pi(u_{k+1})) = \ell(\pi(u_k)) + 1$ and $u_k < u_{k+1}$.
\end{theorem}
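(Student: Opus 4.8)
The plan is to reduce the statement to the explicit attaching maps lifted from the maximal flag manifold $\F$. The first observation is that, since the Lie algebra of $G$ is split, the centralizer $M = C_K(\a)$ is finite; hence $C = M$, $U = N_K(\a)$, and $\pi \colon K \to \F = K/M$ is a covering map of degree $|C|$. In particular $\dim \B(u) = \ell(\pi(u)) = \dim \B(\pi(u))$, and $\pi^{-1}(\B(w))$ is the disjoint union of the $|C|$ Bruhat cells $\B(u)$ with $\pi(u) = w$, each mapped homeomorphically onto $\B(w)$. Fixing a reduced expression $\pi(u) = r_1 \cdots r_d$ with $u = s_1 \cdots s_d c$, I would take as characteristic map of $\B(u)$ the lift to $K$ of the map of \cite{lonardo}, namely $\Phi_u(\theta_1, \dots, \theta_d) = R_1(\theta_1) \cdots R_d(\theta_d)\, c$ on the cube $[0,1]^d$, where $R_i \colon [0,1] \to K$ is the one-parameter arc in the rank-one subgroup attached to $r_i$ with $R_i(0) = \id$, $R_i(1) = s_i^2 = c_i$, and value $s_i$ at the midpoint. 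That $\Phi_u$ restricts to a homeomorphism of the open cube onto $\B(u)$ and that $\pi \circ \Phi_u$ is the corresponding map for $\B(\pi(u))$ is exactly what the lifting of the maps of \cite{lonardo} provides.

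The combinatorics then come from restricting $\Phi_u$ to the faces of the cube. Setting $\theta_i = 0$ deletes the $i$-th factor and yields the map attached to the word $r_1 \cdots \widehat{r_i} \cdots r_d$ with lift $s_1 \cdots s_{i-1} s_{i+1} \cdots s_d c = u_i^0$, while setting $\theta_i = 1$ replaces the $i$-th factor by $c_i$ and yields the same word with lift $s_1 \cdots s_{i-1} c_i s_{i+1} \cdots s_d c = u_i^1$. When $\ell(r_1 \cdots \widehat{r_i} \cdots r_d) = d-1$ the deleted word is reduced, so each of these two faces is a genuine $(d-1)$-dimensional characteristic map, with image $\overline{\B(u_i^0)}$ and $\overline{\B(u_i^1)}$ respectively; otherwise the face collapses onto a cell of dimension $< d-1$, a case excluded by the hypothesis $\ell(\pi(u')) = d-1$. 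This already shows that every listed $u_i^0, u_i^1$ satisfies $u_i^0, u_i^1 < u$.

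For the converse, I would argue that these are the only codimension-one incidences. Since $K$ is compact and $\pi$ is a covering map, $\pi(\overline{\B(u)}) = \overline{\B(\pi(u))}$, so any $(d-1)$-cell $\B(u')$ in $\partial \B(u)$ projects to $\B(\pi(u'))$ with $\pi(u') < \pi(u)$ and $\ell(\pi(u')) = d-1$; by the classical subword description of the Bruhat--Chevalley order on $W$, such a covered element is exactly some $\pi(u') = r_1 \cdots \widehat{r_i} \cdots r_d$ with reduced deletion, and the lifts of this cell meeting $\partial \B(u)$ are precisely the images of the faces $\theta_i \in \{0,1\}$ over the positions $i$ with $r_1 \cdots \widehat{r_i} \cdots r_d = \pi(u')$, that is, among the $u_i^0$ and $u_i^1$. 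This proves the equivalence for the chosen reduced expression; the parenthetical ``for some or equivalently for each'' is then automatic, since the relation $u' < u$ is defined by the closure incidence $\B(u') \subseteq \S(u)$ and makes no reference to the expression, so the set $\{u_i^0, u_i^1\}_i$ that it equals must coincide for every reduced expression of $\pi(u)$.

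Finally, for the ``furthermore'' I would prove by induction on $d = \ell(\pi(u))$ that $\S(u) = \overline{\B(u)}$ is the disjoint union of $\B(u)$ with all $\B(v)$ reachable from $u$ by a descending chain of the covering relations just described: the first part gives $\partial \B(u) = \bigcup_v \overline{\B(v)}$ over the covering elements $v$, and the inductive hypothesis describes each $\overline{\B(v)}$, so this union is closed with the stated cell decomposition. Comparing with $\S(u) = \coprod_{u' \le u} \B(u')$ then identifies $<$ with the transitive closure of the covering relation, which is the assertion. I expect the main obstacle to be the bookkeeping in the converse of the first equivalence: controlling the monodromy of the covering $\pi$ along $\partial \B(u)$ so as to confirm that only the coset representatives $u_i^0$ and $u_i^1$ -- and no other lift of $\B(\pi(u'))$ -- occur in the boundary. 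This is exactly the point where the explicit appearance of $c_i = s_i^2$ at the far endpoint of the $i$-th arc is indispensable.
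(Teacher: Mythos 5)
Your proposal is correct and follows essentially the same route as the paper: the paper likewise lifts the characteristic maps of \cite{lonardo} to $\Psi_u(t_1,\dots,t_d)=\psi_1(t_1)\cdots\psi_d(t_d)\,c$, identifies the faces $t_i=0$ and $t_i=1$ with $u_i^0$ and $u_i^1$ (Lemma \ref{lemma-faces}, which uses the commutation Lemma \ref{trocando m} to move $c_i$ past the later arcs, exactly the role of $c_i=s_i^2$ you single out), invokes the subword property for the reduced positions, and gets the chain characterization by induction. The ``monodromy'' bookkeeping you flag as the main obstacle is precisely what the paper settles with its topological Lemma \ref{lemma-order} together with the density/covering-map argument of Proposition \ref{prop-order}, yielding $\partial \B(u)=\bigcup_{i \in S} \S(u_i^0) \cup \S(u_i^1)$, so that no lift of $\B(\pi(u'))$ other than the $u_i^0$, $u_i^1$ can occur in the boundary.
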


In the case of $\SO(3)$ as the maximal compact subgroup of $\Sl(3, \R)$, we have that
\begin{eqnarray*}
U
& = &
\{\textcolor{blue}{1, c_1, c_2, c_1c_2}\}
\cup \\
&   &
\cup \{\textcolor{red}{s_1, s_1c_1, s_1c_2, s_1c_1c_2, s_2, s_2c_1, s_2c_2, s_2c_1c_2}\} \cup \\
&   &
\cup \{\textcolor{green}{s_2s_1, s_2s_1c_1, s_2s_1c_2, s_2s_1c_1c_2, s_1s_2, s_1s_2c_1, s_1s_2c_2, s_1s_2c_1c_2}\}
\cup \\
&   &
\cup \{\textcolor{orange}{s_1s_2s_1, s_1s_2s_1c_1, s_1s_2s_1c_2, s_1s_2s_1c_1c_2}\}
\end{eqnarray*}
and the order $<$ is given by the graph bellow, where $u' < u$ if there is a sequence of arrows starting from $u'$ and ending in $u$. The labels of the red and green vertices where suppressed in order to get a better visualization of the arrows.

\begin{center}
\includegraphics[scale=0.6]{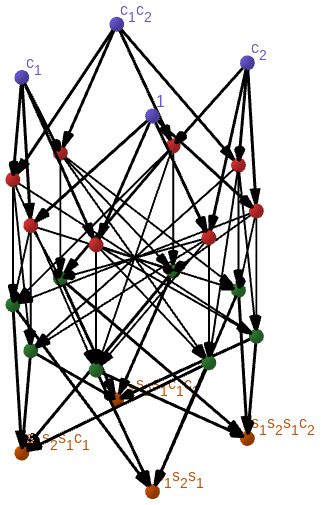}
\end{center}

This algebraic order is also related to the dynamical order between Morse components of the dynamics considered in \cite{Patrao3} and to the dynamical order between control sets of the semigroup actions considered in \cite{Patrao2}. In our second main theorem, the boundary maps are characterized algebraically.

\newpage

\begin{theorem}\label{theo-2}
Let $\mathcal{C}$ be the $\Z$-module freely generated by $\B (u)$, $u \in U$. The  boundary map $\delta : \mathcal{C} \to \mathcal{C}$ is such that, if $u = s_1 \cdots s_dc$ and $\pi(u) = r_1 \cdots r_d$ is a reduced expression, then
\[
\delta \B (u) = \sum_{v < u} \rho (u, v) \B (v)
\]
with $\rho (u, v) = 0$, except when
\[
v = u_i^0 = s_1 \cdots s_{i-1}s_{i+1} \cdots s_d c
\qquad
\mbox{or}
\qquad
v = u_i^1 = s_1 \cdots s_{i-1}c_is_{i+1} \cdots s_d c
\]
and $\pi(v) = r_1 \cdots r_{i-1}r_{i+1} \cdots r_d$ is a reduced expression, and thus
\[
\rho(u, u_i^0) = \varepsilon(u)_i (-1)^i
\]
and
\[
\rho(u, u_i^1) = \varepsilon(u)_i (-1)^{i + 1 + \sigma}
\]
where $\varepsilon(u)_i = \pm 1$ and
\[
\sigma = \sum_{\beta \in \Pi_i} \frac{2\langle \alpha_i, \beta \rangle}{\langle \alpha_i, \alpha_i \rangle}
\]
with $\Pi_i = \Pi^+ \cap r_{i+1} \cdots r_d \Pi^-$, and $\Pi^\pm$ are the sets of positive and negative roots.
\end{theorem}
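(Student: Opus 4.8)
The plan is to compute $\delta\B(u)$ directly from the definition of the cellular boundary, as a sum of local degrees of the attaching map of $\B(u)$ onto the codimension-one cells. I would work with the lift to $K$ of the flag-manifold attaching maps of \cite{lonardo}: writing $u = s_1\cdots s_d c$ with $\pi(u) = r_1\cdots r_d$ reduced, the characteristic map takes the product form
\[
\Phi_u(\theta_1,\dots,\theta_d) = \gamma_1(\theta_1)\cdots\gamma_d(\theta_d)\,c, \qquad (\theta_1,\dots,\theta_d)\in[0,2\pi]^d,
\]
where $\gamma_i$ parametrizes an arc in the circle subgroup $K_{\alpha_i}\subset K$ attached to the simple root $\alpha_i$, normalized so that $\gamma_i(0)=e$, $\gamma_i(\pi)=s_i$ and $\gamma_i(2\pi)=s_i^2=c_i$. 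One checks that $\Phi_u$ carries the open cube homeomorphically onto $\B(u)$ and sends the boundary into $\S(u)\setminus\B(u)$, so that $\rho(u,v)$ is the degree of the composite of $\Phi_u|_{\partial[0,2\pi]^d}$ with the collapse of the $(d-1)$-skeleton onto the $v$-sphere.

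First I would identify which faces can contribute. The faces $\theta_i=0$ and $\theta_i=2\pi$ are the only codimension-one faces whose image can be $(d-1)$-dimensional. On $\theta_i=0$ we have $\gamma_i(0)=e$, so $\Phi_u$ restricts to the characteristic map of $\B(u_i^0)$ with $u_i^0=s_1\cdots s_{i-1}s_{i+1}\cdots s_d c$; on $\theta_i=2\pi$ we have $\gamma_i(2\pi)=c_i$, and commuting $c_i\in C$ to the right shows the restriction parametrizes $\B(u_i^1)$ with $u_i^1=s_1\cdots s_{i-1}c_i s_{i+1}\cdots s_d c$. These are exactly the cells $v<u$ singled out in Theorem \ref{theo-1}. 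When $r_1\cdots r_{i-1}r_{i+1}\cdots r_d$ fails to be reduced, Theorem \ref{theo-1} forces $\B(u_i^0)$ and $\B(u_i^1)$ to have dimension $<d-1$, so the corresponding faces land in lower skeleta and contribute $0$; this accounts for all vanishing coefficients.

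It remains to compute the two nonzero degrees. Both restrictions are homeomorphisms onto their target open cells, so each degree is $\pm1$, and I would split the sign into an orientation factor common to the $i$-th coordinate, packaged as $\varepsilon(u)_i=\pm1$, times the boundary-orientation sign of the cube and a reorientation sign. The induced orientations of the opposite faces $\theta_i=0$ and $\theta_i=2\pi$ differ by the usual $(-1)$, giving $(-1)^i$ and $(-1)^{i+1}$. On $\theta_i=0$ no further correction occurs, yielding $\rho(u,u_i^0)=\varepsilon(u)_i(-1)^i$. On $\theta_i=2\pi$, conjugating the tail arcs $\gamma_{i+1},\dots,\gamma_d$ by $c_i$ to bring them into the standard parametrization of $\B(u_i^1)$ reverses the orientation of each root direction $\g_\beta$ on which $\Ad(c_i)$ acts by $-1$. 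Since $c_i=s_i^2$ acts on $\g_\beta$ by $(-1)^{\langle\beta,\alpha_i^\vee\rangle}$ and the directions swept by the tail are indexed exactly by $\Pi_i=\Pi^+\cap r_{i+1}\cdots r_d\Pi^-$, the total reorientation sign is $(-1)^\sigma$ with $\sigma=\sum_{\beta\in\Pi_i}\tfrac{2\langle\alpha_i,\beta\rangle}{\langle\alpha_i,\alpha_i\rangle}$, giving $\rho(u,u_i^1)=\varepsilon(u)_i(-1)^{i+1+\sigma}$.

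The main obstacle is precisely this reorientation computation on the $\theta_i=2\pi$ face: one must track how the Bott--Samelson change of variables turns the simple-root arcs $\gamma_{i+1},\dots,\gamma_d$ into the inversion-root directions $\Pi_i$, verify with the correct normalization that $\Ad(c_i)$ restricts to multiplication by $(-1)^{\langle\beta,\alpha_i^\vee\rangle}$ on each $\g_\beta$, and confirm that the global orientation factor is the \emph{same} $\varepsilon(u)_i$ on both faces so that it may be pulled out of both coefficients. As a consistency check I would verify that applying $\pi_*$ recovers the flag-manifold boundary formula of \cite{lonardo}: since $\pi_*\B(u_i^0)=\pi_*\B(u_i^1)=\B(r_1\cdots\hat r_i\cdots r_d)$, the sum $\rho(u,u_i^0)+\rho(u,u_i^1)=\varepsilon(u)_i(-1)^i\big(1+(-1)^{1+\sigma}\big)$ reproduces the known coefficient, which vanishes or equals $\pm2$ according to the parity of $\sigma$.
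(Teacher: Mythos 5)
Your proposal is correct and follows essentially the same route as the paper: the lifted product characteristic maps (the paper's $\Psi_u$, with $[0,1]$ in place of your $[0,2\pi]$), the face identifications of Lemma \ref{lemma-faces} via commuting $c_i$ past the tail (Lemma \ref{trocando m}), the cube-face orientations producing $(-1)^i$ and $(-1)^{i+1}$, and the sign $(-1)^{\sigma}$ obtained from the action of $\Ad(c_i)$ on the root spaces $\g_\beta$, $\beta \in \Pi_i$, exactly as in Lemmas \ref{diferencial do m} and \ref{acao do m} and Proposition \ref{degrees}. Your factor $\varepsilon(u)_i$, which you correctly flag as the delicate point, is precisely the paper's degree of the sphere homeomorphism ${\Psi}_{u_i^\epsilon}^{-1} \circ {\Phi}_{u_i^\epsilon}$ comparing the skeleton's fixed reduced expression for $w_i$ with the subword expression $r_1 \cdots \widehat{r_i} \cdots r_d$ (Theorem \ref{degrees1}), so the two arguments agree in substance.
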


In the case of $\SO(3)$ as the maximal compact subgroup of $\Sl(3, \R)$ and in the case of $\SO(4)$ as the maximal compact subgroup of the split real form $G_2$, we can always choose $\varepsilon(u)_i = 1$ in the above formulas. The paper has the following structure. In Section 2, we set the notation and recall the necessary definitions and results about homogeneous spaces of
Lie groups and semi-simple Lie theory. In Section 3, we introduce the Bruhat and Schubert cells in our context and derive some elementary results. In Section 4, we lift to the maximal compact subgroup the attaching maps of the maximal flag manifold and use it to prove Theorem \ref{theo-1}, characterizing the above order. In Section 5, we construct the skeleton and the boundary map, while, in the Section 6, we prove Theorem \ref{theo-2}. Finally, in Section 7, we apply our main results to compute the cellular homology of the two examples mentioned before. Some calculations are left to the Appendix.

\section{Preliminaries}\label{secpreliminar}

\subsection{Homogeneous spaces of Lie groups}
\label{homogspaces}

For the theory of Lie groups and its homogeneous spaces we refer to Hilgert and Neeb \cite{neeb} and Knapp \cite{knapp}.  Let $G$ be a real Lie group with Lie algebra $\g$ where $g \in G$ acts on $X \in \g$ by the adjoint action
$g X = \Ad( g ) X$.  We have that  $\Ad( \exp(X) ) = e^{\ad(X)}$
where $\exp: \g \to G$ is the exponential of $G$, $\Ad$ and $\ad$ are, respectively, the adjoint representation of $G$ and $\g$. Let a Lie group $G$ act on a manifold $F$ on the left by the differentiable map
$
G \times F \to F$,  $(g,x) \mapsto gx
$.
Fix a point $x \in F$.  The isotropy subgroup $G_{x}$ is the set of all $g \in G$ such that $gx = x$.  We say that the action is transitive or, equivalently, that $F$ is a homogeneous space of $G$, if $F$ equals the orbit $Gx$ of $x$ (and hence the orbit of every point of $F$).
In this case, the map
$$
G \to F\qquad g \mapsto g x
$$
is a submersion onto $F$ which is a differentiable locally trivial principal fiber bundle with structure group the isotropy subgroup $G_{x}$.
Quotienting by $G_{x}$ we get the diffeomorphism
$$
G/G_{x} \simto F\qquad g G_{x} \mapsto g x
$$
Since the map $G \to F$ is a submersion, the derivative of the map $g \mapsto gx$ on the identity gives the infinitesimal action of $\g$, more precisely, a surjective linear map
$$
\g \to TF_x \qquad Y \mapsto Y \cdot x
$$
whose kernel is the isotropy subalgebra $\g_x$, the Lie algebra of $G_x$.
The derivative of the map $g: F \to F$, $x \mapsto gx$, gives the action of
$G$ on tangent vectors $g v = D g (v)$, $v \in TF$, which is related to the infinitesimal action by
$$
g( Y \cdot x ) = gY \cdot gx
$$
For a subset $ \q \subset \g$, denote by $\q \cdot x$ the set of all tangent vectors $Y \cdot x$, $Y \in \q$.
It follows that $ TF_{gx} = g( \g \cdot x ) $.

\subsection{Semi-simple Lie theory}\label{section-lie}

For the theory of real semisimple Lie groups and their flag manifolds we refer to Duistermat-Kolk-Varadarajan \cite{dkv},
Helgason \cite{helgason}, Hilgert and Neeb \cite{neeb}, and Knapp \cite{knapp}.
Let $G$ be a connected real Lie group with finite center such that its Lie algebra $\g$ is a split real form. Fix a Cartan decomposition $\g = \k \oplus \s$ and denote by $\theta$ the Cartan involution and by $\langle \cdot, \cdot \rangle$ the associated Cartan inner product.
Let $K$ be the connected subgroup with Lie algebra $\k$, it is a maximal compact subgroup of $G$.
Since $\ad(X)$ is anti-symmetric for $X \in \k$, the Cartan inner product is $K$-invariant.
Since $\ad(X)$ is symmetric for $X \in \s$, a maximal abelian subspace $\frak{a} \subset \frak{s}$ can be simultaneously diagonalized so that $\g$ splits as an orthogonal sum of
$$
\g_\alpha = \{ X \in \g:\, \ad(H)X = \alpha(H)X, \, \forall H \in \a \}
$$
where $\alpha \in \a^*$ (the dual of $\a$). We have that $\g_0 = \m \oplus \a$, where $\m$ is the centralizer of $\a$ in $\k$. Since $\g$ is a split real form, each $\g_{\alpha}$ is one dimensional and that $\m$ is trivial. A root is a functional $\alpha \neq 0$ such that its root space $\g_\alpha \neq 0$, denote the set of roots by $\Pi$. We thus have the root space decomposition of $\g$, given by the orthogonal sum
$$
\g = \m \oplus \a \oplus \sum_{\alpha \in \Pi} \g_\alpha
$$
Fix a Weyl chamber $\frak{a}^{+}\subset \frak{a}$ and let $\Pi^{+}$ be the corresponding positive roots, $\Pi^- = - \Pi^+$ the negative roots and $\Sigma $ the set of simple roots.  Consider the nilpotent subalgebras
\[
\n^\pm = \sum_{\alpha \in \Pi^{\pm}}\frak{g}_{\alpha }
\]
such that
\[
\g = \m \oplus \a \oplus \n \oplus \n^-
\]

In this paper, we look at $K$ as the homogeneous manifold $G/AN$, denoting its base, the left coset $AN$, as $b$. The natural action of $G$ in $G/AN$ is given by left multiplication (as in section 10.1 of \cite{neeb}). From the Iwasawa decomposition, it follows that the map $K \to G/AN$ given by $k \mapsto k b$, is a $K$-equivariant diffeomorphism. The isotropy subalgebra of the base $b$ is given by $\a \oplus \n$, while the isotropy subalgebra of $k b$ for $k \in K$ is given by
\[
\g_{k b} = k (\a \oplus \n)
\]
We also look at the maximal flag manifold $\F$ of $G$ as the homogeneous manifold $G/MAN$, denoting its base, the left coset $MAN$, as $b_0$. and is diffeomorphic to $K/M$. The natural projection $K \to \F$ given by $kb \mapsto k b_0$, is a $M$-principal bundle with the action of the structural group given $(kb,c) \mapsto kmb$. This projection is equivalent to the natural projection $K \to \F$ given by $k \mapsto k M$ and both will be denoted by $\pi$. Note that this map is a differentiable finite covering of $\F$ with $|M|$ sheets. Then for a sufficiently small open set $V \subset \F$ there is a diffeomorphism $\pi^{-1}: V \to B$, where $B$ is an open set of $K$, such that $\pi \pi^{-1} = \textnormal{id} |_V$ and $\pi^{-1} \pi|_B =  \textnormal{id} |_B$. Note also that, for any $m \in M$ and $x \in V$, $\pi (\pi^{-1}(x)m) = x$, so that, for a given $V$, $\pi^{-1}$ is not uniquely defined and in fact has $|M|$ possible choices.

The Weyl group $W$ is the finite group generated by the reflections over the root hyperplanes $\alpha=0$ in $\frak{a}$, $\alpha \in \Pi$. $W$ acts on $\frak{a}$ by isometries and can be alternatively be given as $W=M^{*}/M$ where $M^{*}$ and $M$ are the normalizer and the centralizer of $\a$ in $K$,
respectively. An element $w$ of the Weyl group $W$ can act in $\g$ by taking a representative in $M^*$.  This action centralizes $\a$, normalizes $\m$, permutes the roots $\Pi$ and thus permutes the root spaces $\g_\alpha$, where $w \g_\alpha = \g_{w \alpha}$ does not depend on the representative chosen in $M^*$. Since $\g$ is a split real form and $\Ad(G)$ can be complexified, following the Theorems 7.53 and 7.55 from \cite{knapp}, we get that $\Ad(M) = F$ where $F$ is the cartesian product of cyclic groups of order 2 and is generated by
\[
\gamma_\alpha  = \exp 2 \pi i |\alpha|^{-2}H_{\alpha}
\]
Considering each element $ w \in W$ as a product of simple reflections $r_{\alpha}$, the {\em length} of $\ell (w)$ of $w \in W$ is the number of simple reflections in any reduced expression of $w$. Another useful result Theorem 4.15.10 of \cite{vara} is that $l (w)$ is equal to the cardinality of $\Pi_w = \Pi^+ \cap w \Pi^-$, or the set of positive roots sent to the negative roots by $w^{-1}$. Let $w = r_1 \cdots r_d$ be a fixed reduced expression of $w$ and $\alpha_i = \alpha_{r_i}$ be the simple positive roots for each $r_i$, then each root of $\Pi_w$ can be written explicitly as in Theorem 4.15.10 of \cite{vara} as
\begin{equation}
\label{vara}
\Pi_w = \{ \alpha_1, r_1 \alpha_2 , \dots, r_1 \cdots r_{d-1} \alpha_d \}
\end{equation}

\section{Bruhat and Schubert cells}\label{sec-bruhat-schubert}

We start introducing some elementar definitions and recalling some dynamical results both on $K$ and on $\F$. Since $\m$ is trivial, we have that $M_0 = 1$, that
\[
C=M/M_0=M
\qquad
\mbox{and}
\qquad
U=M_*/M_0=M_*
\]
and that
\[
\pi(U) = M^*/M = W
\]

\begin{definition}
Each $u \in U$ define a Bruhat cell in $K$ given by
\[
\B(u):= N u b
\]
while each $w \in W$ define a Bruhat cell in $\F$ given by
\[
\B(w):= N w b_0
\]
and we have that
\[
\pi(\B(u)) = \B(\pi(u))
\]
\end{definition}

By Theorem 3.9 of \cite{Patrao3}, choosing a regular element $H$, we get the Bruhat decomposition of $K$, given by
\[
K = \coprod N u b = \coprod \B (u)
\]
where the disjoint unions are taken over all $u \in U$, and, by Proposition 1.3 of \cite{dkv}, we get the Bruhat decomposition of $\F$, given by
\[
\F = \coprod N w b_0 = \coprod \B (w)
\]
where the disjoint unions are taken over all $w \in W$. Note that if one prefers to use instead the stable manifolds decomposition its possible to obtain similar results by using that $N^- = u^- N (u^-)^{-1}$ where $u^- \in w^- M$ and $w^-$ is the principal involution.

In an analogous fashion to flags we define in $K$, the Schubert cells to be the closure of the Bruhat cells.

\begin{definition}
Each $u \in U$ define a Schubert cell in $K$ given by
\[
\S(u):= \textnormal{cl} (\B (u))
\]
while each $w \in W$ define a Schubert cell in $\F$ given by
\[
\S(w):= \textnormal{cl} (\B (w))
\]
and we have that
\[
\pi(\S(u)) = \S(\pi(u))
\]
\end{definition}

Let $\textnormal{cl} A$ be the closure of the set $A$. In the present work we take $\partial (A)$ to mean the {\em frontier} of the set $A$, that is $\textnormal{cl} A \backslash A$. Note that in this case the frontier of the Bruhat cell is then $\partial \B (u) = \S(u) \backslash \B (u)$ and also $\partial \B (w) = \S(w) \backslash \B (w)$. Later, to avoid confusion of notation, we use $\delta$ to be the boundary operator.

\begin{lemma} \label{border of bruhat}
The frontier of a Bruhat cell, $\partial \B (u) := \S (u) \backslash \B (u) $ in $K$ is the union of Bruhat cells $\B (v) = N v b$ for $v \in V (u)$, where $V (u)$ is a subset of $U$. The cells in $\partial \B (u)$ have smaller dimension then $\B(u)$.
\end{lemma}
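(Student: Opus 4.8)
The plan is to prove the two assertions separately, using the left $N$-action on $K$ for the first and the covering map $\pi \colon K \to \F$ for the second. First I would note that $\B(u) = Nub$ is precisely the $N$-orbit of the point $ub$ under left multiplication, and since each element of $N$ acts on $K = G/AN$ as a homeomorphism, the closure $\S(u) = \cl(\B(u))$ is $N$-invariant; hence so is the frontier $\partial \B(u) = \S(u) \backslash \B(u)$. Writing the Bruhat decomposition $K = \coprod_{u' \in U} \B(u')$, I get $\partial\B(u) = \coprod_{u' \in U}\big(\partial\B(u) \cap \B(u')\big)$. Each intersection is $N$-invariant and contained in the single $N$-orbit $\B(u') = Nu'b$, so it is either empty or all of $\B(u')$. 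Taking $V(u) = \{ u' \in U : \B(u') \subseteq \partial\B(u)\}$, which does not contain $u$ since $\B(u)\cap\partial\B(u) = \emptyset$, yields $\partial\B(u) = \coprod_{v \in V(u)}\B(v)$, the first assertion.

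For the dimension statement the crucial step, which I expect to be the real obstacle, is a separation property of the covering $\pi$: if $y \in \S(u)$ and $\pi(y) \in \B(\pi(u))$, then already $y \in \B(u)$. Here I would use that $\pi$ is a local diffeomorphism and that $\pi|_{\B(u)} \colon \B(u) \to \B(\pi(u))$ is a bijection --- the fibre of $\pi$ over a point of $\B(\pi(u))$ meets $\B(u)$ in exactly one point --- hence a diffeomorphism with continuous inverse $\psi$ satisfying $\psi \circ \pi = \id$ on $\B(u)$. Given $y \in \S(u)$, choose $n_k \in N$ with $n_k u b \to y$; then $\pi(n_k u b) \to \pi(y)$ inside $\B(\pi(u))$, and continuity of $\psi$ gives $n_k u b = \psi(\pi(n_k u b)) \to \psi(\pi(y)) \in \B(u)$, so that $y = \psi(\pi(y)) \in \B(u)$ by uniqueness of limits in the (metrizable) group $K$. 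Contrapositively, every point of $\partial\B(u)$ projects into $\partial\B(\pi(u)) = \cl(\B(\pi(u)))\backslash\B(\pi(u))$.

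Finally I would combine this with the cell structure of $\F$. For $v \in V(u)$ and any $y \in \B(v)\subseteq\partial\B(u)$, the separation property gives $\pi(y) \in \partial\B(\pi(u)) = \coprod_{w' < \pi(u)}\B(w')$, so $\pi(v) < \pi(u)$ in the Bruhat--Chevalley order and in particular $\ell(\pi(v)) < \ell(\pi(u))$. Since $\pi$ is a local diffeomorphism and each $\B(w)$ is a cell of dimension $\ell(w)$ in $\F$ (Proposition 1.3 of \cite{dkv}), we get $\dim\B(v) = \dim\B(\pi(v)) = \ell(\pi(v)) < \ell(\pi(u)) = \dim\B(\pi(u)) = \dim\B(u)$, proving that the cells in $\partial\B(u)$ have strictly smaller dimension. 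The genuine difficulty is entirely in the separation property: one must exclude the possibility that a ``parallel'' cell $\B(um)$ with $m \in M$, $m \neq 1$, lying in the same $\pi$-fibre as $\B(u)$, shows up in the frontier, and this cannot be seen by dimension counting alone but rather requires the covering structure and the continuity of the local section $\psi$.
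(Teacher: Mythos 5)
Your proposal follows the same two-step route as the paper's proof: the first assertion via the left $N$-action (the paper phrases it with sequences $n' n^{-1} n_k ub \to n'vb$, you phrase it as the all-or-nothing intersection of the $N$-invariant frontier with the $N$-orbits of the Bruhat decomposition --- the same argument), and the dimension assertion by projecting along the covering $\pi$ to $\F$ and quoting the flag-side result. In fact you are more detailed than the paper at the decisive point: the paper simply \emph{asserts} $\pi(\partial \B(u)) \subset \partial \B(\pi(u))$ and then cites Proposition 1.9(2) of \cite{lonardo}, whereas you correctly isolate this inclusion as the real content --- one must rule out a parallel cell $\B(uc)$, $c \in C$, $c \neq 1$, lying over the same Weyl element and sitting inside $\partial\B(u)$ --- and you supply a proof of it.

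One step of your separation argument is, however, not justified as written: from ``$\pi$ is a local diffeomorphism and $\pi|_{\B(u)}$ is a bijection onto $\B(\pi(u))$'' you conclude ``hence a diffeomorphism with continuous inverse $\psi$''. That inference is false in general for restrictions of covering maps to subsets: the covering $\R \to S^1$ restricted to $[0,1)$ is an injective local diffeomorphism and a bijection onto $S^1$, yet its inverse is not continuous --- and continuity of $\psi$ is exactly what your limit argument $n_k ub = \psi(\pi(n_k ub)) \to \psi(\pi(y))$ consumes. The gap is closable precisely by the covering structure you invoke informally at the end: with $w = \pi(u)$, the restriction $\pi^{-1}(\B(w)) \to \B(w)$ is a covering of $\B(w)$, which is a cell (Proposition 1.3 of \cite{dkv}), hence simply connected and locally path connected, so the covering is trivial and its sheets are the connected components of $\pi^{-1}(\B(w)) = \coprod_{c \in C}\B(uc)$; since $\B(u) = Nub$ is connected (continuous image of $N$) and $\pi(\B(u)) = \B(w)$, injectivity forces $\B(u)$ to be an entire sheet, so $\pi|_{\B(u)}$ is a homeomorphism. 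Alternatively, writing $f : n \mapsto nub$ on $N \cap wN^-w^{-1}$ and $g = \pi|_{\B(u)}$, the composite $g \circ f : n \mapsto nwb_0$ is a homeomorphism onto $\B(w)$ by \cite{dkv}, whence $\psi = g^{-1} = f \circ (g \circ f)^{-1}$ is continuous. With that one step repaired, your argument is complete and, at the asserted inclusion, more rigorous than the published proof.
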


\begin{proof}
Let $n v b$ be in the frontier of $\B (u) = N u b$ for some $n \in N$ then $\B (v) = N v b$ is also in the frontier of $N u b$. In fact, since $n v b$ is in the frontier of $N u b$ then there is a sequence $n_k$ in $N$ such that $n_k u b \to n v b$. Let $y$ be any point in $N v b$ then $y = n' v b$ for some $n' \in N$ and the sequence in $N u b$, $n' n^{-1} n_k u b$, converges to $n' n^{-1} n v b = n' v b = y$ and $y$ is also in the frontier of $\B (u)$.

Since $\pi$ is a finite cover it preserves dimension of cells. Let $\pi(u)=w$ then $\pi(\partial \B(u)) \subset \partial \B(w)$, now from Proposition 1.9 (2) of \cite{lonardo} the dimension of $\partial \B(w)$ is less then the dimension of $\B(w)$.
\end{proof}

So for each $u \in U$ there is {\em a subset} $V (u)$ of $U$ so that
\[
\partial \B(u) = \bigcup_{v \in V(u)} \B(v)
\]
so that,
\[
\S(u) = \B(u) \cup \bigcup_{v \in V(u)} \B(v)
\]
and since multiplication by right by an element $c \in C$ takes Bruhat cells to Bruhat cells
\[
\S(uc) = \S(u)c,
\qquad
\B(uc) = \B(u)c,
\qquad
\mbox{and}
\qquad
\partial \B(uc) = \partial \B(u)c
\]
for any $c \in C$, so that $V(uc)=V(u)c$. In the maximal flag manifold, we have that
\[
\partial \B(w) = \bigcup_{w' < w} \B(w')
\]
where the Bruhat-Chevalley order in $W$ is such that $w' < w$ if and only if for each reduced expression $w = r_1 \cdots r_d$, there exist $i_1 < \cdots < i_{d'}$ such that $w' = r_{i_1} \cdots r_{i_{d'}}$ is a reduced expression. In the next section, we will characterize the order in $U$ given by $u' < u$ if and only if $u' \in V(u)$.

\section{Attaching maps and the order}

We start constructing a map $\Psi_u$ from a closed cube to the Schubert cell $\S(u)$ for each $u \in U$ following similar steps used in \cite{lonardo} to build a map $\Psi_w$ from a closed cube to the Schubert cell $\S(w)$ for each $w \in W$. As in Section 1.3 of \cite{lonardo}, since each $\g_{\alpha}$ is one dimensional, there is an isomorphism from $\sl(2,\R)$ to $\g(\alpha):=\g_{\alpha} \oplus \R H_{\alpha} \oplus \g_{-\alpha}$ that can be complexified taking $\sl(2,\C)$ to $\g(\alpha)^{\C}$. Hence we can choose a normalized $E_\alpha \in \g_{\alpha}$ such that $\langle E_\alpha, \theta E_\alpha \rangle = 2\pi^2/\langle H_\alpha, H_\alpha \rangle$.

\begin{definition}
\[
\psi_j (t_j) := \exp({ F_j t_j}) \mbox{ for } t_j \in [0, 1]
\]
where $F_j = F_{\alpha_j}$ and $F_{\alpha} := E_{\alpha} + \theta E_{\alpha} \in \k$ for a fixed chosen $E_{\alpha} \in \g_{\alpha}$. Let
\[
s_{\alpha} := \exp(F_{\alpha}/2) \in U, \quad c_\alpha:= \exp(F_\alpha) \in C
\]
And similarly, $s_j = s_{\alpha_j}$, $c_j = c_{\alpha_j}$, so that ${c_j = s_j^2 = \exp (F_j)}$.
\end{definition}

Observe that $E_\alpha = \pi X_\alpha$ and $F_\alpha = \pi A_\alpha$, where $X_\alpha$ and $A_\alpha$ are defined in \cite{lonardo} with other normalization. Let $u \in U$ and $\pi (u) = w = r_1 \dots r_n$ be a fixed reduced decomposition. Note that, since $\pi(s_\alpha) =r_\alpha$, then $\pi(s_1 \cdots s_d) = r_1 \cdots r_d=w$, so that there exists $c \in C$ such that $u=s_1 \cdots s_d c$.

\begin{definition}
Let $u = s_1 \cdots s_d c $, for some $c \in C$, such that $w = \pi (u) = r_1 \cdots r_d$ is a reduced decomposition. Then define $\Psi_u : J^d \to K$,
\[
\Psi_u (t_1, \dots, t_d):= \psi_1 (t_1) \cdots \psi_d (t_d) c
\]
for $t_j \in J = [0, 1]$. Denoting $I = (0, 1)$, we have
\[
u = s_1 \cdots s_d c = \psi_1 (1/2) \cdots \psi_d (1/2) c = \Psi_u (1/2, \dots, 1/2) \in \Psi_u (I^d)
\]
\end{definition}

Now we obtain a result similiar to Proposition 1.9 of \cite{lonardo}.

\begin{proposition} \label{difeo from cube to cell}
Let $u = s_1 \cdots s_d c $, for some $c \in C$, such that $w = \pi (u) = r_1 \cdots r_d$ is a reduced decomposition. Denote the intervals $I = (0, 1) $ and $J = [0, 1]$, so that $\partial I^d = J^d \backslash I^d$, then

\[
\S (u) = \textnormal{cl} (\B (u)) = \Psi_u (J^d)
\]
The frontier of $\B (u)$ is
\[
\Psi _u \left( \partial I^d \right) = \partial \B (u) = \S (u) \backslash \B (u)
\]
and $\Psi_u |_{I^d}$ is a diffeomorphism from $I^d$ to $\B (u) = N u b$.
\end{proposition}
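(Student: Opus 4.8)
The plan is to deduce all three assertions from their flag-manifold counterparts in Proposition~1.9 of \cite{lonardo} by pulling them back through the finite covering $\pi\colon K\to\F$. The linchpin is the commutativity relation
\[
\pi\circ\Psi_u=\Psi_w,
\]
where $\Psi_w\colon J^d\to\F$ is the attaching map of \cite{lonardo}. Indeed, $\pi(\Psi_u(t_1,\dots,t_d))=\psi_1(t_1)\cdots\psi_d(t_d)\,b_0$, since $c\in C=M$ fixes the base $b_0$, and the chosen normalization of the $F_\alpha$ makes each projected factor $\exp(F_jt_j)b_0$ coincide with the $j$-th factor of $\Psi_w$—a loop based at $b_0$ passing through $r_jb_0$ at $t_j=1/2$; this gives $\pi\circ\Psi_u=\Psi_w$.

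First I would show that $\pi$ restricts to a diffeomorphism on each Bruhat cell lying over $\B(w)$. By the Bruhat decomposition of $K$ together with $\pi(\B(u'))=\B(\pi(u'))$, the preimage $\pi^{-1}(\B(w))$ is the disjoint union $\coprod_{c\in C}\B(uc)$ of $|M|$ cells. Each $\B(uc)=N(uc)b$ is connected, being a continuous image of the connected group $N$, while $\B(w)$ is an open cell and hence contractible; therefore these $|M|$ cells are exactly the sheets of the trivial covering over $\B(w)$, and $\pi$ maps each of them diffeomorphically onto $\B(w)$. In particular $\pi|_{\B(u)}\colon\B(u)\simto\B(w)$.

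With these two facts in hand the three claims transport cleanly. Since $\Psi_u(I^d)$ is connected and, by commutativity, contained in $\pi^{-1}(\B(w))=\coprod_c\B(uc)$, and since it contains $\Psi_u(1/2,\dots,1/2)=ub\in\B(u)$, connectedness forces $\Psi_u(I^d)\subset\B(u)$; consequently $\Psi_u|_{I^d}=(\pi|_{\B(u)})^{-1}\circ(\Psi_w|_{I^d})$ is a composite of diffeomorphisms and hence a diffeomorphism of $I^d$ onto $\B(u)$, giving the last assertion. For the closure, $\Psi_u(J^d)$ is compact, hence closed, and contains $\B(u)$, so $\textnormal{cl}(\B(u))\subset\Psi_u(J^d)$; conversely $\Psi_u(J^d)=\Psi_u(\textnormal{cl}\,I^d)\subset\textnormal{cl}\,\Psi_u(I^d)=\S(u)$, whence $\S(u)=\Psi_u(J^d)$. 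For the frontier, the inclusion $\Psi_u(J^d)\backslash\Psi_u(I^d)\subset\Psi_u(\partial I^d)$ is formal, and the reverse reduces to checking $\Psi_u(\partial I^d)\cap\B(u)=\emptyset$; this holds because $\pi\,\Psi_u(\partial I^d)=\Psi_w(\partial I^d)=\partial\B(w)$ is disjoint from $\B(w)$, so $\Psi_u(\partial I^d)$ avoids $\pi^{-1}(\B(w))\supset\B(u)$. Thus $\Psi_u(\partial I^d)=\S(u)\backslash\B(u)=\partial\B(u)$.

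The substantive geometry is already carried out in \cite{lonardo}; the only point that genuinely needs care here—and the step I expect to be the \emph{main obstacle}—is the analysis of the covering $\pi$, namely verifying that it is injective (thus a diffeomorphism) on each single Bruhat cell and that $\Psi_u(I^d)$ lands in the intended sheet $\B(u)$ rather than in some translate $\B(uc)$. Both are settled by the connectedness argument above, crucially using the disjointness of the Bruhat cells of $K$.
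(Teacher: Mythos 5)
Your proof is correct, but at the decisive step it takes a genuinely different route from the paper's. Both arguments start from $\pi\circ\Psi_u=\Psi_w$ and Proposition 1.9 of \cite{lonardo}, reduce everything to showing $\Psi_u(I^d)=\B(u)$, and both finish with a connectedness argument; the difference is how the disjoint pieces of $\pi^{-1}(\B(w))=\coprod_{c\in C}\B(uc)$ get separated by open sets, without which connectedness proves nothing (the cells $\B(uc)$ are not open in $K$ unless top-dimensional). The paper does this by explicit group theory: it first proves injectivity of $\pi$ on a single cell directly from disjointness of the Bruhat cells of $K$, and then embeds each $\B(uc)$ into the translate $u(u^-)^{-1}\B(u^-c)$ of a top-dimensional (hence open) Bruhat cell, using the principal involution $u^-$ and the decomposition $N'=(N'\cap N^-)(N'\cap N)$ from Proposition 2.7 of \cite{Patrao2}. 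You instead invoke covering-space theory: since $\B(w)$ is simply connected and locally path-connected, the restricted covering $\pi^{-1}(\B(w))\to\B(w)$ is trivial with $|M|$ sheets, and the $|M|$ disjoint connected cells $\B(uc)$ must coincide with the sheets, so each is relatively open in $\pi^{-1}(\B(w))$ and maps diffeomorphically onto $\B(w)$ --- which also yields the paper's injectivity lemma for free. Two small points you compress and should state explicitly: the pigeonhole identification of the $|M|$ cells with the $|M|$ components, and the fact that finitely many components are open in the subspace, which is precisely what your phrase ``connectedness forces'' relies on. On balance, your route is shorter and more conceptual, avoiding $u^-$ and the auxiliary decomposition from \cite{Patrao2} entirely, while the paper's route stays at the same concrete algebraic level as the rest of the section and produces explicit disjoint open neighborhoods inside $K$ rather than appealing to the classification of coverings; your handling of the closure and frontier statements is essentially the same as the paper's.
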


\begin{proof}
First we prove that $\pi:K \to \F$ is injective in each Bruhat cell in $K$. Let $n$, $n' \in N$ and assume that $n u b \neq n' u b$ and $\pi (n u b) = \pi (n' u b)$. Then there is $c \in C$ such that $n u b = n' u c b$, but since the Bruhat cells are disjoint then $c = 1$, that is a contradiction and $\pi$ is injective in each Bruhat cell.

Since $\pi$ is $G$-equivariant then $\pi (N u b) = N w b_0$, where $\pi (u) = w \in W$, so that $\pi$ is also surjective in Bruhat cells, and then bijective in Bruhat cells.

Now, from Proposition 1.9 items (1),(2),(3) of \cite{lonardo} and using that $\pi \Psi_u = \Psi_w$ then
\begin{enumerate}
\item $\pi (\Psi_u (J^d)) = \pi (\S (u) )$.

\item $\pi (\Psi_u (\textbf{t}) )\in \pi ( \partial \B (u) )$ if and only if $\textbf{t} \in \partial I^d = J^d \backslash I^d \cong S^{d-1}$.

\item $\pi \Psi_u|_{I^d} : I^d \to N w b_0 = \pi (N u b) = \pi (\B (u))$ is a diffeomorphism.
\end{enumerate}

In order to show the corresponding statements for $K$, we start with the third one. Since
\[
\pi(\Psi_u(I^d)) = \pi(Nub)
\]
we have that
\[
 \Psi_u(I^d) \subset \coprod_{c \in C} Nucb
\]
Take $N'= u' N (u')^{-1}$ where $u' = u^- u^{-1}$. In Proposition 2.7 of \cite{Patrao2}
\[
N'=(N'\cap N^-)(N'\cap N)
\]
Where in the paper the notation $N_u(B)$ is used for $N'$. Now note that
\[
u^-u^{-1}Nub= u^-u^{-1}Nu(u^-)^{-1}u^- b = N'u^-b=(N'\cap N)(N'\cap N^-)u^-b
\]
and
\[
(N'\cap N)(N'\cap N^-)u^-b =(N'\cap N)u^- (u^-)^{-1}(N'\cap N^-)u^-b = (N' \cap N)u^-b \subset Nu^-b
\]
since
\[
(u^-)^{-1}(N'\cap N^-)u^- \subset (u^-)^{-1}N^-u^- = N
\]
It follows that
\[
 Nub \subset u (u^-)^{-1} N u^-b = uN^-b
\]
since $(u^-)^{-1}Nu^- = N^-$. Hence
\[
Nucb \subset ucN^-b = uN^-cb = u (u^-)^{-1} N u^-cb = u (u^-)^{-1} \B(u^-c)
\]
Since each $\B(u^-c)$ is open and they are disjoint and since $\Psi_u(I^d)$ is connected and contains $ub$, then necessarily $\Psi_u(I^d) = Nub$.
Hence
\[
\Psi_u(J^d) = \textnormal{cl} \Psi_u (I^d) = \textnormal{cl} (\B (u)) = \S(u)
\]
Also $\psi_u(\textbf{t}) \in \partial \S (u)$ if and only if $\textbf{t} \in \partial I^d = J^d \backslash I^d \cong S^{d-1}$ and $\Psi_u|_{I^d} : I^d \to N u b = \B (u)$ is a diffeomorphism.
\end{proof}

The closed cube $J^d$ can be identified by a homeomorphism preserving orientation to a closed ball $B_d$ of dimension $d$ so that the frontier of the cube, $\partial I^d = J^d \backslash I^d$, is identified with a sphere $S^{d-1}$.

\begin{lemma} \label{lemma-order}
Let $X$ be a topological space and $\Psi: J^d \to X$ be a continuous map such that $\Psi$ is a homeomorphism from $I^d$ to $\Psi(I^d)$, that
\[
\Psi(J^d \backslash I^d) = \Psi(J^d)\backslash \Psi(I^d)
\]
For $i \in \{1,\ldots,d\}$, denote $J_i^{d-1} = J^{i-1} \times \{0, 1\} \times J^{d-i}$ and $I_i^{d-1} = I^{i-1} \times \{0, 1\} \times I^{d-i}$. If there is a subset $S$ of $\{1,\ldots,d\}$ such that, for each $i \in S$, $\Psi$ is a homeomorphism from $I_i^{d-1}$ to $\Psi(I_i^{d-1})$, that
\[
\Psi(J_i^{d-1} \backslash I_i^{d-1}) = \Psi(J_i^{d-1})\backslash \Psi(I_i^{d-1})
\]
and that
\[
\Psi(J^d \backslash I^d) = \bigcup_{i \in S} \Psi(J_i^{d-1})
\]
then
\[
\bigcup_{j \notin S} \Psi(J_j^{d-1}) \subset \bigcup_{i \in S} \Psi(J_i^{d-1})\backslash \Psi(I_i^{d-1})
\]
\end{lemma}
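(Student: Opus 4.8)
The plan is to prove the displayed inclusion pointwise, reducing it to a single delicate configuration that will be settled by invariance of domain. Throughout write $\partial I^d=J^d\backslash I^d=\bigcup_{k=1}^d J_k^{d-1}$ and, for a point $\mathbf{r}\in J^d$, set $B(\mathbf{r})=\{k:r_k\in\{0,1\}\}$. Three preliminary observations will be used repeatedly. First, the hypothesis $\Psi(J^d\backslash I^d)=\Psi(J^d)\backslash\Psi(I^d)$ is equivalent to the disjointness $\Psi(I^d)\cap\Psi(\partial I^d)=\emptyset$; likewise, for $i\in S$, the face hypothesis gives $\Psi(I_i^{d-1})\cap\Psi(J_i^{d-1}\backslash I_i^{d-1})=\emptyset$ and lets us rewrite the right-hand side as $\bigcup_{i\in S}\Psi(J_i^{d-1}\backslash I_i^{d-1})$. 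Second, since $\Psi(\partial I^d)=\bigcup_{i\in S}\Psi(J_i^{d-1})$, every point of $\bigcup_{j\notin S}\Psi(J_j^{d-1})$ already lies in $\bigcup_{i\in S}\Psi(J_i^{d-1})$; the whole difficulty is to upgrade this to the smaller right-hand side.

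Fix $x=\Psi(\mathbf{t})$ with $\mathbf{t}\in J_j^{d-1}$, $j\notin S$, so $j\in B(\mathbf{t})$. If $B(\mathbf{t})\cap S\neq\emptyset$, choose $i\in B(\mathbf{t})\cap S$; then $i\neq j$, and $t_i\in\{0,1\}$ together with $t_j\in\{0,1\}$ forces $\mathbf{t}\in J_i^{d-1}\backslash I_i^{d-1}$, so $x$ lies in the right-hand side and we are done. It remains to treat the case $B(\mathbf{t})\cap S=\emptyset$, i.e. $t_i\in(0,1)$ for every $i\in S$. Here the second observation gives some $i_0\in S$ and $\mathbf{s}\in J_{i_0}^{d-1}$ with $\Psi(\mathbf{s})=x$. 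If $\mathbf{s}\in J_{i_0}^{d-1}\backslash I_{i_0}^{d-1}$ we are again done, so the entire lemma reduces to ruling out the configuration $\mathbf{s}\in I_{i_0}^{d-1}$, $i_0\in S$, together with $t_{i_0}\in(0,1)$ and $\Psi(\mathbf{s})=\Psi(\mathbf{t})=x$; note $\mathbf{s}\neq\mathbf{t}$ since $s_{i_0}\in\{0,1\}$.

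To rule this out I would localize at $\mathbf{s}$. Assuming $s_{i_0}=0$ (the case $s_{i_0}=1$ is symmetric), pick $\eta>0$ small enough that the half-cube $\mathcal{O}=\{\mathbf{r}\in J^d: r_{i_0}\in[0,\eta),\ |r_k-s_k|<\eta\ (k\neq i_0)\}$ satisfies $r_k\in(0,1)$ for all $k\neq i_0$; then $\mathcal{O}\cap\partial I^d=\mathcal{O}\cap I_{i_0}^{d-1}$, $\mathcal{O}\backslash I_{i_0}^{d-1}\subseteq I^d$, and $\mathcal{O}\cong\R^{d-1}\times[0,\eta)$. The map $\Psi|_{\mathcal{O}}$ is injective: it is injective on $\mathcal{O}\cap I^d$ because $\Psi|_{I^d}$ is a homeomorphism, injective on $\mathcal{O}\cap I_{i_0}^{d-1}$ since $i_0\in S$, and there are no collisions between the two pieces because $\Psi(I^d)$ and $\Psi(\partial I^d)$ are disjoint. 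By invariance of domain applied to this continuous injection of a $d$-dimensional half-cube, $\Psi(\mathcal{O})$ is a neighbourhood of $x$ in $\Psi(J^d)$ whose only points of $\Psi(I^d)$ are those of $\Psi|_{I^d}(\mathcal{O}\cap I^d)$.

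The contradiction then comes from the second preimage $\mathbf{t}$. Approaching $\mathbf{t}$ from inside the cube along a boundary coordinate $j_0\in B(\mathbf{t})$ gives interior points $\mathbf{t}-\tfrac1n e_{j_0}\in I^d$ with $\Psi(\mathbf{t}-\tfrac1n e_{j_0})\to x$; these images lie in $\Psi(I^d)$, so by the previous paragraph they eventually lie in $\Psi|_{I^d}(\mathcal{O}\cap I^d)$, whence their $\Psi|_{I^d}$-preimages eventually lie in $\mathcal{O}$. But those preimages converge to $\mathbf{t}$, which is at positive distance from $\overline{\mathcal{O}}$ (as $t_{i_0}\in(0,1)$ while $\overline{\mathcal{O}}$ has $i_0$-coordinate in $[0,\eta]$), a contradiction. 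This excludes the configuration $\mathbf{s}\in I_{i_0}^{d-1}$ and completes the reduction. The combinatorial reduction and the injectivity of $\Psi|_{\mathcal{O}}$ are routine from the stated hypotheses; the one substantial point, and the step I expect to be the main obstacle, is the invariance-of-domain argument, which is exactly where the homeomorphism hypotheses on $I^d$ and on the faces $I_i^{d-1}$, $i\in S$, are genuinely needed to pin down the local model of $\Psi(J^d)$ at $x$.
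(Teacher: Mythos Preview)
Your argument follows the paper's strategy: to rule out a collision $\Psi(\mathbf{t})=\Psi(\mathbf{s})$ between $\mathbf{t}\in J_j^{d-1}$ ($j\notin S$) and $\mathbf{s}\in I_i^{d-1}$ ($i\in S$), localize on a half-neighbourhood of $\mathbf{s}$ contained in $I^d\cup I_i^{d-1}$, use injectivity of $\Psi$ there to argue the image is a neighbourhood of $x$ in $\Psi(J^d)$, and then approach $\mathbf{t}$ from inside $I^d$ to obtain a contradiction. The paper's execution is marginally slicker---by taking the local set $B$ to restrict the $j$-th coordinate (to $(t'_j-\varepsilon,t'_j+\varepsilon)$) rather than the $i$-th, it avoids both your opening case split and the invariance-of-domain invocation, simply asserting that $\Psi$ is a homeomorphism on $I^d\cup I_i^{d-1}$ and hence that $\Psi(B)$ is open in $\Psi(J^d)$; two small slips in your write-up are that the approximating sequence must perturb \emph{all} boundary coordinates of $\mathbf{t}$ (not just one) to actually lie in $I^d$, and that $\eta$ should also be chosen below $t_{i_0}$ (respectively $1-t_{i_0}$) so that $\mathbf{t}$ is genuinely outside $\overline{\mathcal{O}}$.
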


\begin{proof}
If $j \notin S$ and $(t_1,\ldots,t_d) \in J_j^{d-1}$, it follows that
\[
\Psi(t_1,\ldots,t_d) \in \Psi(J^d \backslash I^d) = \bigcup_{i \in S} \Psi(J_i^{d-1})
\]
It remains to show that, if $i \in S$ and $(t'_1,\ldots,t'_d) \in I_i^{d-1}$, then
\[
\Psi(t_1,\ldots,t_d) \neq \Psi(t'_1,\ldots,t'_d)
\]
Since $i \neq j$, we have that $t'_j \in I = (0,1)$ and thus there exists $\varepsilon > 0$ such that $(t'_j - \varepsilon, t'_j + \varepsilon) \subset I$. If $i < j$, denote
\[
B = I^{i-1} \times J \times I^{j-i-1} \times (t'_j - \varepsilon, t'_j + \varepsilon) \times I^{d-j}
\]
If $j < i$, denote
\[
B = I^{j-1} \times (t'_j - \varepsilon, t'_j + \varepsilon) \times I^{i-j-1} \times J \times I^{d-j}
\]
In both cases, we have that
\[
(t'_1,\ldots,t'_d) \in B \subset I^d \cup I_i^{d-1}
\]
and that $B$ is an open subset of $J^d$. Since $\Psi$ is a homeomorphism from $I^d \cup I_i^{d-1}$ to $\Psi(I^d \cup I_i^{d-1})$, it follows that $\Psi(B)$ is an open neighborhood of $\Psi(t'_1,\ldots,t'_d)$ in $\Psi(J^d)$. But we have that $t_j = 0$ or $t_j = 1$ and that
\[
\Psi(t_1,\ldots,t_d)
= \lim_{s \to 0^+} \Psi(t_1 +s_1,\ldots, t_d + s_d)
\]
where $s_k  = s$, if $t_k < 1$, and $s_k  = -s$, if $t_k = 1$. If we denote
\[
s_0 = \min \{t'_j - \varepsilon, 1 - t'_j - \varepsilon\} > 0
\]
for all $s \in (0,s_0)$, we have that
\[
(t_1 +s_1,\ldots, t_d + s_d) \in I^d \cup I_i^{d-1} \backslash B
\]
Hence $\Psi(t_1 +s_1,\ldots, t_d + s_d) \notin \Psi(B)$ and therefore $\Psi(t_1,\ldots,t_d) \notin \Psi(B)$.
\end{proof}

\begin{lemma} \label{trocando m}
For every $i$ and $j$, it holds that $c_i \psi_j (t_j) = \psi_j (t_j) c_i$, for all $t_j \in (0, 1)$, or that $c_i \psi_j (t_j) = \psi_j (1 - t_j) c_jc_i$, for all $t_j \in (0, 1)$.
\end{lemma}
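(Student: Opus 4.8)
The plan is to reduce the whole statement to a single computation of the adjoint action $\Ad(c_i)$ on the element $F_j = E_{\alpha_j} + \theta E_{\alpha_j}$, and then to read off the two alternatives from the sign of that action. Since $\psi_j(t_j) = \exp(F_j t_j)$ and $c_i = \exp(F_i)$, conjugation gives
\[
c_i \psi_j(t_j) c_i^{-1} = \exp\big( t_j \Ad(c_i) F_j \big),
\]
so everything rests on understanding $\Ad(c_i)F_j$.

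First I would show that $\Ad(c_i)F_j = \epsilon F_j$ with $\epsilon \in \{+1,-1\}$. The point is that $c_i = c_{\alpha_i} \in C = M$, so $\Ad(c_i)$ centralizes $\a$ and preserves each one-dimensional root space $\g_{\alpha_j}$, acting there by a scalar $\epsilon$. Because $\Ad(M) = F$ is a cartesian product of cyclic groups of order two, this scalar satisfies $\epsilon^2 = 1$, whence $\epsilon = \pm 1$; equivalently one reads $\epsilon$ off from $\gamma_{\alpha_i}$ acting on $\g_{\alpha_j}$, which is a sign determined by the corresponding Cartan integer. Moreover $c_i \in K$ forces $\Ad(c_i)$ to commute with the Cartan involution $\theta$, so $\Ad(c_i)$ acts on $\g_{-\alpha_j} = \theta\g_{\alpha_j}$ by the \emph{same} scalar $\epsilon$. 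Therefore $\Ad(c_i)(E_{\alpha_j} + \theta E_{\alpha_j}) = \epsilon(E_{\alpha_j} + \theta E_{\alpha_j})$, i.e.\ $\Ad(c_i)F_j = \epsilon F_j$.

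With this in hand the two cases are immediate. If $\epsilon = +1$, then $c_i\psi_j(t_j)c_i^{-1} = \exp(t_jF_j) = \psi_j(t_j)$, giving the first alternative $c_i\psi_j(t_j) = \psi_j(t_j)c_i$. If $\epsilon = -1$, then $c_i\psi_j(t_j)c_i^{-1} = \exp(-t_jF_j)$, and I would rewrite this using that all $\exp(sF_j)$ commute, together with the relation $c_j^2 = \exp(2F_j) = 1$, which holds because of the chosen normalization of $E_{\alpha_j}$ (reducing to $\sl(2,\R)$, $F_{\alpha_j}$ exponentiates to a rotation by $\pi$, whose square is the identity). Indeed
\[
\exp(-t_jF_j) = \exp(-F_j)\exp((1-t_j)F_j) = c_j^{-1}\psi_j(1-t_j) = c_j\psi_j(1-t_j) = \psi_j(1-t_j)c_j,
\]
which yields the second alternative $c_i\psi_j(t_j) = \psi_j(1-t_j)c_jc_i$. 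Note the computation is valid for all $t_j$, so restricting to $(0,1)$ is harmless.

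The exponential manipulations are routine; the only genuinely substantive inputs are the claim $\Ad(c_i)F_j = \pm F_j$ and the order-two relation $c_j^2 = 1$. I expect the main obstacle to be justifying cleanly that the scalar by which $c_i$ acts on $\g_{\alpha_j}$ is exactly $\pm 1$ and that $\theta$-equivariance transports it unchanged to $\g_{-\alpha_j}$ — in other words, that the adjoint action can only fix $F_j$ or negate it and cannot mix the two root spaces. This is precisely where both hypotheses are used: $c_i \in M$ gives the two-torsion scalar action, and $c_i \in K$ gives the $\theta$-equivariance.
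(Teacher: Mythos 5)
Your proof is correct and takes essentially the same approach as the paper's: both arguments reduce to computing $\Ad(c_i)E_j=\pm E_j$ (using that $c_i$ preserves the one-dimensional root space $\g_{\alpha_j}$ and acts with square the identity), transport the sign to $F_j$ via $\theta$-equivariance to get $\Ad(c_i)F_j=\pm F_j$, and in the minus case rewrite $\exp(-t_jF_j)=\psi_j(1-t_j)c_j$ using $c_j^2=1$. The only cosmetic difference is that you handle $i=j$ uniformly within the same computation, whereas the paper dismisses that case as trivial.
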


\begin{proof}
Let $t_j \in (0, 1)$ and $i \neq j $, remember that $\psi_j (t_j) = \exp (t_j F_j)$, where $F_j : = E_j + \theta E_j$, where $\theta$ is the Cartan involution. First, we calculate $c_i \psi_j (t_j) c_i$. For this, let us calculate $\Ad (c_i) E_j$, since $c_i \g_{\alpha_j} = \g_{\alpha_j}$ and $c_i^2 = 1$ then $\Ad (c_i) E_j = \pm E_j$. Since $\Ad (c_i) \theta = \theta \Ad (c_i)$, then, if $\Ad (c_i) E_j = E_j$, we have that
\[
\Ad (c_i) F_j = \Ad (c_i) E_j + \theta \Ad(c_i) E_j =  E_j + \theta E_j = F_j
\]
and, if $ \Ad (c_j) E_i = - E_i$, we have that
\[
\Ad (c_i) F_j = \Ad (c_i) E_j + \theta \Ad(c_j) E_j = - E_j + \theta (- E_j) = - F_j
\]
So
\[
c_i \exp (t_j F_j) c_j^{-1} = \exp ( \Ad (c_i) (t_j F_j) ) = \exp ( \pm t_j F_j)
\]
If $\Ad (c_i) F_j = F_j$, then $c_i \psi_j (t_j) = \psi_j (t_j) c_i$, and, if $\Ad(c_i) F_j = - F_j$, then
\[
c_i \psi_j (t_j) = \psi_j (-t_j) c_i = \psi_j (1 - t_j) c_j c_i
\]
and $1 - t_i \in (0, 1)$. The case $i=j$ is trivial.
\end{proof}

\begin{lemma}\label{lemma-faces}
Let $u = s_1 \cdots s_d c $, for some $c \in C$, and denote
\[
u_i^0 = s_1 \cdots s_{i-1}s_{i+1} \cdots s_d c
\qquad
\mbox{and}
\qquad
u_i^1 = s_1 \cdots s_{i-1}c_is_{i+1} \cdots s_d c
\]
If $\pi(u) = w = r_1 \cdots r_d$ and $\pi(u_i^0) = \pi(u_i^1) = w_i = r_1 \cdots r_{i-1}r_{i+1} \cdots r_d$ are reduced expressions, then $\Psi_w$ is a diffeomorphism from $I_i^{d-1}$ to $\Psi_w(I_i^{d-1})$ with
\[
\Psi_w(I_i^{d-1}) = \B(w_i)
\qquad
\mbox{and}
\qquad
\Psi_w(J_i^{d-1} \backslash I_i^{d-1}) = \partial \B(w_i)
\]
and $\Psi_u$ is a diffeomorphism from $I_i^{d-1}$ to $\Psi_u(I_i^{d-1})$ with
\[
\Psi_u(I_i^{d-1}) = \B(u_i^0) \cap \B(u_i^1)
\qquad
\mbox{and}
\qquad
\Psi_u(J_i^{d-1} \backslash I_i^{d-1}) = \partial \B(u_i^0) \cap \partial \B(u_i^1)
\]
\end{lemma}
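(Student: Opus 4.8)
The plan is to prove both assertions by splitting $I_i^{d-1}$ into its two faces $\{t_i = 0\}$ and $\{t_i = 1\}$ and recognizing the restriction of $\Psi_w$ (resp. $\Psi_u$) to each face as a reparametrized attaching map of a cell whose index projects to $w_i$, so that Proposition \ref{difeo from cube to cell} (resp. its flag-manifold model, Proposition 1.9 of \cite{lonardo}) applies verbatim. The two inputs I would use are that $\psi_i(0) = e$ and $\psi_i(1) = c_i$, and that $c_i$ can be pushed to the right through $\psi_{i+1}, \ldots, \psi_d$ by Lemma \ref{trocando m}.

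First I would treat $\Psi_w$. On $\{t_i = 0\}$ the factor $\psi_i(0) = e$ drops out, so the restriction is literally $\Psi_{w_i}$ with $w_i = r_1 \cdots r_{i-1} r_{i+1} \cdots r_d$; since this word is reduced by hypothesis, its image on the open face is $\B(w_i)$ and on the face boundary is $\partial \B(w_i)$. On $\{t_i = 1\}$ one gets $\psi_i(1) = c_i$ in the middle; moving it to the right by Lemma \ref{trocando m} replaces some $t_j$ by $1 - t_j$ and accumulates a factor of $M$, which fixes the base $b_0$ of $\F = K/M$. After this coordinate flip the restriction is again $\Psi_{w_i}$, so both faces have image $\B(w_i)$, giving $\Psi_w(I_i^{d-1}) = \B(w_i)$ and $\Psi_w(J_i^{d-1} \setminus I_i^{d-1}) = \partial \B(w_i)$, with $\Psi_w$ a diffeomorphism of each face onto $\B(w_i)$.

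Then I would repeat this for $\Psi_u$, the only change being that the base $b = AN$ of $K = G/AN$ is no longer fixed by $C$, so on $\{t_i = 1\}$ the middle factor $c_i$ survives. The face $\{t_i = 0\}$ yields $\Psi_{u_i^0}$ directly, while the face $\{t_i = 1\}$ yields $(t_j)_{j \neq i} \mapsto \psi_1(t_1) \cdots \psi_{i-1}(t_{i-1})\, c_i\, \psi_{i+1}(t_{i+1}) \cdots \psi_d(t_d)\, c$, which Lemma \ref{trocando m} identifies, after the coordinate flip, with $\Psi_{u_i^1}$ for $u_i^1 = s_1 \cdots s_{i-1} s_{i+1} \cdots s_d\, \tilde c\, c$; here $\tilde c \in C$ is the accumulated factor, and since the midpoint $t_j = 1/2$ is fixed by the flips the distinguished point is exactly the $u_i^1$ of the statement. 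Because $\pi(u_i^0) = \pi(u_i^1) = w_i$ is reduced, Proposition \ref{difeo from cube to cell} applies to each, so the two open faces map diffeomorphically onto $\B(u_i^0)$ and $\B(u_i^1)$ and their boundaries onto $\partial \B(u_i^0)$ and $\partial \B(u_i^1)$. As $u_i^0 \neq u_i^1$ these are distinct Bruhat cells, hence disjoint, so $\Psi_u$ is injective on $I_i^{d-1}$ and a diffeomorphism onto its image; collecting the two faces gives $\Psi_u(I_i^{d-1}) = \B(u_i^0) \sqcup \B(u_i^1)$ and $\Psi_u(J_i^{d-1} \setminus I_i^{d-1}) = \partial \B(u_i^0) \cup \partial \B(u_i^1)$.

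The hard part will be the bookkeeping inside Lemma \ref{trocando m}: I must check that pushing $c_i$ all the way to the right returns precisely $u_i^1$ (and not some other lift of $w_i$), and that the induced map $(t_j) \mapsto (\tilde t_j)$ is an orientation-compatible diffeomorphism of $I^{d-1}$ carrying the open cube to itself and its boundary to its boundary, so that $\B$ and $\partial \B$ are transported faithfully. Lemma \ref{lemma-order} then supplies the complementary fact that the faces indexed by $j \notin S$ contribute nothing outside $\bigcup_{i \in S} \Psi_u(I_i^{d-1})$, which is what ultimately pins down $\partial \B(u)$ as the union of the cells $\B(u_i^0), \B(u_i^1)$ over $i \in S$.
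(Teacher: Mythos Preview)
Your approach is essentially the same as the paper's: analyze the two faces $t_i=0$ and $t_i=1$ separately, use $\psi_i(0)=e$ and $\psi_i(1)=c_i$, push $c_i$ to the right via Lemma~\ref{trocando m}, and then invoke Proposition~\ref{difeo from cube to cell} (resp.\ Proposition~1.9 of \cite{lonardo}) on the resulting reduced word $w_i$. The paper carries out exactly these steps and concludes with the observation that the reparametrization $(t_j)\mapsto(\phi_j(t_j))$ is a diffeomorphism of $J^{d-1}$ and of $I^{d-1}$.

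Two small remarks. First, you correctly write $\Psi_u(I_i^{d-1})=\B(u_i^0)\cup\B(u_i^1)$; the $\cap$ in the displayed statement is a typo, as is clear from the proof and from the way the result is used in Proposition~\ref{prop-order}. Second, your final paragraph about Lemma~\ref{lemma-order} and pinning down $\partial\B(u)$ is not part of this lemma at all---that argument belongs to Proposition~\ref{prop-order}. Your explicit check that $u_i^0\neq u_i^1$ (hence the two Bruhat cells are disjoint, giving global injectivity of $\Psi_u$ on $I_i^{d-1}$) is a point the paper leaves implicit; it amounts to $c_i\neq 1$, which holds in the split setting under consideration.
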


\begin{proof}
On one hand, it is immediate that
\[
\Psi_w (t_1, \dots, t_{i-1},0,t_{i+1}, \dots, t_d)
=
\Psi_{w_i} (t_1, \dots, t_{i-1},t_{i+1}, \dots, t_d)
\]
and that
\[
\Psi_u (t_1, \dots, t_{i-1},0,t_{i+1}, \dots, t_d)
=
\Psi_{u_i^0} (t_1, \dots, t_{i-1},t_{i+1}, \dots, t_d)
\]
On the other hand, by successive applications of Lemma \ref{trocando m}, we have that
\[
c_i\psi_{i+1}(t_{i+1}) \cdots \psi_d (t_d) = \psi_{i+1}(\phi_{i+1}(t_{i+1})) \cdots \psi_d (\phi_d(t_d)) c'
\]
where $c' \in C$ and $\phi_j(t_j) = t_j$ or $\phi_j(t_j) = 1 - t_j$. It follows that
\begin{eqnarray*}
\Psi_w (t_1, \dots, t_{i-1},1,t_{i+1}, \dots, t_d)
& = &
\psi_1 (t_1) \cdots  \psi_{i-1}(t_{i-1})c_j\psi_{i+1}(t_{i+1})   \cdots \psi_d (t_d) b_0 \\
& = &
\psi_1 (t_1) \cdots  \psi_{i-1}(t_{i-1})\psi_{i+1}(\phi_{i+1}(t_{i+1})) \cdots \psi_d (\phi_d(t_d)) c' b_0 \\
& = &
\Psi_{w_i} (t_1, \dots, t_{i-1},\phi_{i+1}(t_{i+1}), \dots, \phi_d(t_d))
\end{eqnarray*}
and that
\begin{eqnarray*}
\Psi_u (t_1, \dots, t_{i-1},1,t_{i+1}, \dots, t_d)
& = &
\psi_1 (t_1) \cdots  \psi_{i-1}(t_{i-1})c_j\psi_{i+1}(t_{i+1})   \cdots \psi_d (t_d) c \\
& = &
\psi_1 (t_1) \cdots  \psi_{i-1}(t_{i-1})\psi_{i+1}(\phi_{i+1}(t_{i+1})) \cdots \psi_d (\phi_d(t_d)) c' c \\
& = &
\Psi_{u_i^1} (t_1, \dots, t_{i-1},\phi_{i+1}(t_{i+1}), \dots, \phi_d(t_d))
\end{eqnarray*}
since
\[
u_i^1
=
\Psi_u (1/2, \dots, 1/2,1,1/2, \dots, 1/2)
=
s_1 \cdots s_{i-1}s_{i+1} \cdots s_d c' c
\]
The result follows, since
\[
(t_1, \dots, t_{i-1},t_{i+1}, \dots, t_d)
\mapsto
(t_1, \dots, t_{i-1},\phi_{i+1}(t_{i+1}), \dots, \phi_d(t_d))
\]
is a diffeomorphism from $J^{d-1}$ to itself and from $I^{d-1}$ to itself.
\end{proof}

\begin{proposition} \label{prop-order}
Let $w = r_1 \cdots r_d$ be a reduced decomposition and denote
\[
w_i = r_1 \cdots r_{i-1}r_{i+1} \cdots r_d
\]
Let $S$ be the subset of $i \in \{1,\ldots,d\}$ such that $w_i$ is a reduced expression. Then
\[
\S(w) = \B(w) \cup \bigcup_{i \in S} \S(w_i)
\]
and
\[
\bigcup_{j \notin S} \S(w_j) \subset \bigcup_{i \in S} \partial \B(w_i)
\]
Furthermore, let $u = s_1 \cdots s_d c $, for some $c \in C$, and denote
\[
u_i^0 = s_1 \cdots s_{i-1}s_{i+1} \cdots s_d c
\qquad
\mbox{and}
\qquad
u_i^1 = s_1 \cdots s_{i-1}c_is_{i+1} \cdots s_d c
\]
Then
\[
\S(u) = \B(u) \cup \bigcup_{i \in S} \S(u_i^0) \cup \S(u_i^1)
\]
and
\[
\bigcup_{j \notin S} \S(u_j^0) \cup \S(u_j^1) \subset \bigcup_{i \in S} \partial \B(u_i^0) \cup \partial \B(u_i^1)
\]
\end{proposition}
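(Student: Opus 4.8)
The plan is to transport both assertions to the closed cube via the parametrizations $\Psi_w$ and $\Psi_u$ and then to invoke the abstract Lemma \ref{lemma-order}. By Proposition \ref{difeo from cube to cell} I may write $\S(u)=\Psi_u(J^d)$, $\B(u)=\Psi_u(I^d)$ and $\partial\B(u)=\Psi_u(J^d\backslash I^d)$, and likewise for $w$; since $J^d\backslash I^d=\bigcup_{i=1}^d J_i^{d-1}$, the frontier is the union of the face images $\Psi_u(J_i^{d-1})$. Lemma \ref{lemma-faces} identifies these faces: for $i\in S$ the two components $t_i=0$ and $t_i=1$ of $J_i^{d-1}$ are carried onto $\S(u_i^0)$ and $\S(u_i^1)$, with frontiers $\partial\B(u_i^0)$ and $\partial\B(u_i^1)$, and onto $\S(w_i)$ in the flag. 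The decisive structural point is that in $\F$ the element $c_i\in M$ fixes $b_0$, so both components collapse onto the same cell and $\Psi_w$ is two-to-one on $I_i^{d-1}$, whereas in $K$ one has $c_i b\neq b$, so the two components land in the disjoint cells $\B(u_i^0)$ and $\B(u_i^1)$ and $\Psi_u$ is injective on $I_i^{d-1}$. It is precisely this injectivity that makes Lemma \ref{lemma-order} applicable to $K$.

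For the statements about $\F$ I would invoke Proposition 1.9 of \cite{lonardo}, which gives $\S(w)=\B(w)\cup\bigcup_{i\in S}\S(w_i)$ together with the absorption of the non-reduced faces $w_j$, $j\notin S$; combined with the description of the Bruhat--Chevalley order recalled before Section 4, this yields $\bigcup_{j\notin S}\S(w_j)\subset\bigcup_{i\in S}\partial\B(w_i)$. The two-to-one phenomenon above is the reason this cannot be run through Lemma \ref{lemma-order} and must be cited directly.

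For $K$ I would verify the hypotheses of Lemma \ref{lemma-order} for $(K,\Psi_u,S)$. That $\Psi_u$ is a homeomorphism from $I^d$ onto $\B(u)$ and that $\Psi_u(J^d\backslash I^d)=\S(u)\backslash\B(u)$ are Proposition \ref{difeo from cube to cell}; for $i\in S$, Lemma \ref{lemma-faces} together with the disjointness of $\B(u_i^0)$ and $\B(u_i^1)$ gives that $\Psi_u$ is a homeomorphism from $I_i^{d-1}$ onto its image and that $\Psi_u(J_i^{d-1}\backslash I_i^{d-1})=\partial\B(u_i^0)\cup\partial\B(u_i^1)$ equals $\Psi_u(J_i^{d-1})\backslash\Psi_u(I_i^{d-1})$, using that frontiers are disjoint from the top cells. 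The only hypothesis that is not formal is the absorption identity $\Psi_u(J^d\backslash I^d)=\bigcup_{i\in S}\Psi_u(J_i^{d-1})$. Granting it, Lemma \ref{lemma-order} gives $\bigcup_{j\notin S}\Psi_u(J_j^{d-1})\subset\bigcup_{i\in S}\bigl(\Psi_u(J_i^{d-1})\backslash\Psi_u(I_i^{d-1})\bigr)=\bigcup_{i\in S}\bigl(\partial\B(u_i^0)\cup\partial\B(u_i^1)\bigr)$; rewriting $\Psi_u(J_j^{d-1})=\S(u_j^0)\cup\S(u_j^1)$ is then the second assertion, and feeding the now-covered non-reduced faces back into $\S(u)=\B(u)\cup\bigcup_{i=1}^d\Psi_u(J_i^{d-1})$ gives the first.

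The step I expect to be the main obstacle is the absorption identity, and inside it the control of the $C$-component under the covering $\pi$. Using $\pi\Psi_u=\Psi_w$ and that $\pi$ is injective on each Bruhat cell, a point of a non-reduced face $\Psi_u(J_j^{d-1})$ projects into $\bigcup_{i\in S}\S(w_i)$ by the flag result, hence lies in $\pi^{-1}(\S(w_i))=\bigcup_{c\in C}\S(u_i^0)c$ for some $i\in S$; the difficulty is to show it lands in the two distinguished sheets $\S(u_i^0)$ and $\S(u_i^1)$ rather than in some other translate $\S(u_i^0)c$. I would resolve this with three ingredients: the identity $u_i^1=u_i^0\tilde c$, obtained by commuting $c_i$ to the right through repeated use of Lemma \ref{trocando m}; a connectedness argument, each component of a non-reduced face being connected, already meeting a distinguished sheet at the image of its center, and $\bigcup_{i\in S}(\S(u_i^0)\cup\S(u_i^1))$ being closed; and the identification $\Psi_u(J_j^{d-1})=\S(u_j^0)\cup\S(u_j^1)$ for non-reduced $j$, i.e. that the closed-cube image of a non-reduced word is the Schubert cell of its product, itself lifted from \cite{lonardo}. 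Lemma \ref{border of bruhat}, which already shows $\pi(\partial\B(u))\subset\partial\B(w)$ and exhibits $\partial\B(u)$ as a union of lower Bruhat cells, provides the dimension bookkeeping that keeps this argument consistent.
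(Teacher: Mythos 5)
Your overall skeleton is the paper's: establish the flag statements first (the paper gets the decomposition $\S(w)=\B(w)\cup\bigcup_{i\in S}\S(w_i)$ from the Proposition in \S 5.11 of \cite{humphreys} rather than from \cite{lonardo}, but your combinatorial route is equivalent), then transport to $K$ through the cube, and your observation that $\Psi_w$ is two-to-one on $I_i^{d-1}$ while $\Psi_u$ is injective there is correct and in fact sharper than the paper's own phrasing. The execution of your crux step, however, fails at ingredient (c): the closed-cube image of a non-reduced word is \emph{not} the Schubert cell of its product; it is governed by the Demazure product of the word, which is strictly larger. Concretely, take $u=s_1s_2s_1$ in $\Sl(3,\R)$ and $j=2\notin S$: the face $t_2=0$ has image $\{\psi_1(t_1)\psi_1(t_3)b\}=\psi_1([0,2])b$, a full circle equal to $\S(s_1)\cup\S(s_1c_1)$, whereas $u_2^0=c_1$ and $u_2^1=s_1c_2s_1$ both project to $1\in W$, so $\S(u_2^0)\cup\S(u_2^1)$ is two points. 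Hence your closing rewriting ``$\Psi_u(J_j^{d-1})=\S(u_j^0)\cup\S(u_j^1)$ is then the second assertion'' rests on a false identity. What is actually needed — and all the paper uses — is the single membership $u_j^\epsilon b=\Psi_u(\mbox{center of the face component})\in\Psi_u(J_j^{d-1})$, combined with the fact that $\bigcup_{i\in S}\partial\B(u_i^0)\cup\partial\B(u_i^1)$ is closed and a union of $N$-orbits (Lemma \ref{border of bruhat}), so that once it contains the point $u_j^\epsilon b$ it contains $\S(u_j^\epsilon)=\mathrm{cl}(Nu_j^\epsilon b)$; you never invoke this $N$-invariance.

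Ingredient (b) is also inconclusive, so the absorption identity — which you rightly single out as the only non-formal hypothesis — remains unproved in your argument. The $C$-translates $\S(u_i^0)c$ are closed but \emph{not} disjoint: they overlap in lower-dimensional boundary cells (in $\SO(3)$, $\S(s_1)\cap\S(s_1c_1)=\{b,c_1b\}$), so a connected face component meeting one distinguished sheet can still pass into a non-distinguished translate through such an overlap; moreover your anchor, that the face center already lies in a distinguished sheet, is essentially an instance of the statement being proved. The paper's mechanism is different and is the missing idea: restrict $\pi$ to $p:\partial\B(u)\to\partial\B(w)$, use the flag inclusion together with the fact that $\bigcup_{i\in S}\partial\B(w_i)$ consists of cells of dimension strictly less than $d-1$ to obtain $p^{-1}\bigl(\bigcup_{i\in S}\B(w_i)\bigr)=\bigcup_{i\in S}\B(u_i^0)\cup\B(u_i^1)$ inside $\partial\B(u)$, and then a density argument — $\bigcup_{i\in S}\B(w_i)$ is dense in $\partial\B(w)$ and $\pi$ is a covering — to conclude $\partial\B(u)=\bigcup_{i\in S}\S(u_i^0)\cup\S(u_i^1)$, which yields your absorption identity and the first assertion simultaneously; the second inclusion then follows by projecting under $p$ and the $N$-invariance point above. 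Until (b) and (c) are replaced by an argument of this kind, the $K$-half of your proof has a genuine hole.
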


\begin{proof}
We have that
\[
\S(w)
=
\bigcup_{w'\leq w} \B(w')
=
\B(w) \cup \bigcup_{w' < w} \S(w')
\]
By the Proposition of section 5.11 of \cite{humphreys}, we have that, if $w' < w$, then $w' \leq w_i$ for some $i \in S$. Since $\S(w') \subset \S(w_i)$, if $w' \leq w_i$, it follows that
\[
\S(w)
=
\B(w) \cup \bigcup_{i \in S} \S(w_i)
\]
We have that $\Psi_w: J^d \to \F$ is a continuous map such that $\Psi_w$ is a homeomorphism from $I^d$ to $\Psi_w(I^d)$, that
\[
\Psi_w(I^{d}) = \B(w)
\qquad
\mbox{and}
\qquad
\Psi_w(J^{d}) = \S(w)
\]
and thus
\[
\Psi_w(J^d \backslash I^d) = \partial \B(w) = \Psi_w(J^d)\backslash \Psi_w(I^d)
\]
For each $i \in S$, $\Psi_w$ is a homeomorphism from $I_i^{d-1}$ to $\Psi_w(I_i^{d-1})$, that
\[
\Psi_w(I_i^{d-1}) = \B(w_i)
\qquad
\mbox{and}
\qquad
\Psi_w(J_i^{d-1}) = \S(w_i)
\]
and thus
\[
\Psi_w(J_i^{d-1} \backslash I_i^{d-1}) = \partial \B(w_i) =  \Psi_w(J_i^{d-1})\backslash \Psi_w(I_i^{d-1})
\]
Since
\[
\Psi_w(J^d \backslash I^d) =
\partial \B(w) =
\bigcup_{i \in S} \S(w_i) =
\bigcup_{i \in S} \Psi_w(J_i^{d-1})
\]
it follows that
\[
\bigcup_{j \notin S} \Psi_w(J_j^{d-1}) \subset
\bigcup_{i \in S} \Psi_w(J_i^{d-1})\backslash \Psi_w(I_i^{d-1})
=
\bigcup_{i \in S} \partial \B(w_i)
\]
Since $w_j \in \Psi_w(I_j^{d-1})$, for every $j \in S$, it follows that
\[
\bigcup_{j \notin S} \S(w_j) \subset \bigcup_{i \in S} \partial \B(w_i)
\]
Denote by $p: \partial \B(u) \to \partial \B(w)$ the restriction to $\partial \B(u)$ of $\pi: K \to \F$.
Since
\[
\partial \B(u)
\backslash
\bigcup_{i \in S} \Psi_u(I_i^{d-1})
=
\bigcup_{j \notin S} \Psi_u(J_j^{d-1})
\cup
\bigcup_{i \in S} \Psi_u(J_i^{d-1}) \backslash \Psi_u(I_i^{d-1})
\]
since
\[
p\left(\bigcup_{j \notin S} \Psi_u(J_j^{d-1}) \right)
=
\bigcup_{j \notin S} \Psi_w(J_j^{d-1})
\subset
\bigcup_{i \in S} \partial \B(w_i)
\]
and since
\[
p\left(\bigcup_{i \in S} \Psi_u(J_i^{d-1}) \backslash \Psi_u(I_i^{d-1}) \right)
=
\bigcup_{i \in S} \Psi_w(J_i^{d-1}) \backslash \Psi_w(I_i^{d-1})
=
\bigcup_{i \in S} \partial \B(w_i)
\]
it follows that
\[
p^{-1}\left(\bigcup_{i \in S} \B(w_i) \right)
=
\bigcup_{i \in S} \Psi_u(I_i^{d-1})
=
\bigcup_{i \in S} \B(u_i^0) \cup \B(u_i^1)
\]
Since $\bigcup_{i \in S} \B(w_i)$ is dense in $\partial \B(w)$ and since $\pi$ is a covering map, it follows that $\bigcup_{i \in S} \B(u_i^0) \cup \B(u_i^1)$ is dense in $\partial \B(u)$, which implies that
\[
\S(u) \backslash \B(u)
=
\partial \B(u)
=
\bigcup_{i \in S} \S(u_i^0) \cup \S(u_i^1)
\]
Finally, since
\[
p\left(\bigcup_{j \notin S} \S(u_j^0) \cup \S(u_j^1)\right)
=
\bigcup_{j \notin S} \S(w_j)
\subset
\bigcup_{i \in S} \partial \B(w_i)
\]
it follows that
\[
\bigcup_{j \notin S} \S(u_j^0) \cup \S(u_j^1) \subset \bigcup_{i \in S} \partial \B(u_i^0) \cup \partial \B(u_i^1)
\]
\end{proof}

Now the characterization of the order is almost immediate.

\begin{theorem} \label{theo-order}
Let $u, u' \in U$. If $\ell(\pi(u)) = d$ and $\ell(\pi(u')) = d-1$, then $u' < u$ if and only if for some (or equivalently for each) reduced expression $\pi(u) = r_1 \cdots r_d$ we have that $u = s_1 \cdots s_dc$, for some $c \in C$, and that
\[
u' = u_i^0 = s_1 \cdots s_{i-1}s_{i+1} \cdots s_d c
\qquad
\mbox{or}
\qquad
u' = u_i^1 = s_1 \cdots s_{i-1}c_is_{i+1} \cdots s_d c
\]
for some $1 \leq i \leq d$, where $c_i = s_i^2$. Furthermore, $u' < u$ if and only if there exists $u_1,\dots,u_n$ such that $u_1 = u'$, $u_n = u$, $\ell(\pi(u_{k+1})) = \ell(\pi(u_k)) + 1$ and $u_k < u_{k+1}$.
\end{theorem}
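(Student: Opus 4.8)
The plan is to read both statements off Proposition \ref{prop-order}, using that the order on $U$ is defined intrinsically by $u' < u \iff \B(u') \subseteq \partial \B(u)$, and that by Proposition \ref{difeo from cube to cell} one has $\dim \B(u) = \ell(\pi(u))$, so that Lemma \ref{border of bruhat} forces $\ell(\pi(u')) < \ell(\pi(u))$ whenever $u' < u$. First I would fix a reduced expression $\pi(u) = r_1 \cdots r_d$ and the corresponding writing $u = s_1 \cdots s_d c$ (which determines $c$ uniquely once the word is fixed), and let $S$ be the set of indices $i$ for which $w_i = r_1 \cdots r_{i-1} r_{i+1} \cdots r_d$ is reduced; note that $\ell(\pi(u_i^0)) = \ell(\pi(u_i^1)) = d-1$ precisely when $i \in S$, so the hypothesis $\ell(\pi(u')) = d-1$ automatically restricts attention to indices in $S$.

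For the first (covering) statement, assume $\ell(\pi(u')) = d - 1$. The implication "$\Leftarrow$" is immediate: if $u' = u_i^0$ or $u' = u_i^1$ with $i \in S$, then Proposition \ref{prop-order} gives $\B(u') \subseteq \S(u_i^0) \cup \S(u_i^1) \subseteq \partial \B(u)$, hence $u' < u$. For "$\Rightarrow$", I would use that, by Proposition \ref{prop-order},
\[
\partial \B(u) = \bigcup_{i \in S} \S(u_i^0) \cup \S(u_i^1),
\]
and that each $\S(u_i^\varepsilon)$ is a disjoint union of Bruhat cells $\coprod_{v \le u_i^\varepsilon} \B(v)$. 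Since the Bruhat decomposition of $K$ is a partition into cells and $\B(u') \subseteq \partial \B(u)$, the single cell $\B(u')$ must coincide with one of the cells appearing on the right, so $u' \le u_i^0$ or $u' \le u_i^1$ for some $i \in S$; as $\ell(\pi(u')) = d - 1 = \ell(\pi(u_i^\varepsilon))$, a further drop is impossible and $u' = u_i^0$ or $u' = u_i^1$. Finally, "for some" versus "for each" reduced expression is free: the relation $u' < u$ does not depend on any chosen expression, while Proposition \ref{prop-order} applies verbatim to whichever reduced expression one fixes; thus if $u'$ is realized as some $u_i^\varepsilon$ for one expression then $u' < u$, and conversely $u' < u$ forces it to be realized for every expression.

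For the "furthermore" statement, one direction is transitivity of $<$, which follows from $\S(u') \subseteq \S(u)$ whenever $\B(u') \subseteq \partial \B(u)$ (take closures): a chain $u' = u_1 < \cdots < u_n = u$ yields $\B(u_1) \subseteq \S(u_n)$, and since the lengths differ, $\B(u_1) \subseteq \partial \B(u_n)$, i.e. $u' < u$. For the converse I would induct on $\ell(\pi(u)) - \ell(\pi(u'))$. When this equals $1$ the first part provides the one-step chain. When it is larger, $\B(u') \subseteq \partial \B(u) = \bigcup_{i \in S} \S(u_i^0) \cup \S(u_i^1)$ together with the disjoint-cell argument above produces some $i \in S$ and $\varepsilon \in \{0,1\}$ with $u' \le u_i^\varepsilon$, where $u_i^\varepsilon < u$ is a covering relation. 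If $u' = u_i^\varepsilon$ we are done; otherwise $u' < u_i^\varepsilon$ with a strictly smaller length gap, so induction yields a covering chain from $u'$ to $u_i^\varepsilon$, to which we append $u$.

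The main obstacle is the step that extracts, from the mere containment $\B(u') \subseteq \partial \B(u)$, a single covering $u_i^\varepsilon$ with $u' \le u_i^\varepsilon$. I expect the cleanest route is exactly the disjoint-cell argument: rather than trying to show the connected set $\B(u')$ lies in one closed piece $\S(u_i^\varepsilon)$ (which is not automatic for a covering by closed sets), one observes that $\partial \B(u)$ is itself a disjoint union of Bruhat cells and that each $\S(u_i^\varepsilon)$ is a union of such cells, so that the single cell $\B(u')$ is forced to be one of them. Verifying that $\ell(\pi(u_i^\varepsilon)) = d-1$ exactly on $S$, and that equal length under $\le$ forces equality, are the remaining routine points.
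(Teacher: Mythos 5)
Your proposal is correct and follows exactly the route the paper intends: the paper states the theorem as ``almost immediate'' from Proposition \ref{prop-order}, and you derive it from that proposition in the same way, merely making explicit the routine bookkeeping (the disjointness of the Bruhat cells in $K = \coprod_{u \in U} \B(u)$ to pin down $u' = u_i^{\epsilon}$, the closure argument for transitivity, and the induction on the length gap for the covering-chain statement). No gaps.
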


\section{Skeleton and boundary maps}

To construct the skeleton for the CW or cellular decomposition of $K$ obtained here, we will follow page 5 of \cite{hatcher} and construct inductively the $d$-skeleton $X^d$, or the skeleton of dimension $d$, from $X^{d-1}$ starting by the discrete set $X^0$ and attaching the open $d$-cells via {\em attaching} maps from $S^{d-1} \to X^{d-1}$. For each $u \in U$, we fix a reduced expression $w = \pi(u) = r_1 \cdots r_d$ and write $u = s_1 \cdots s_d c $ for some $c \in C$. Remember that the dimension of $\B (u)$ is thus equal to $d$. So the dimension is $0$ if and only if $l(w) = 0$ which happens if and only if $w = 1$ and thus there are $|C|$ cells of lowest dimension, given by $\B (c) = \S (c) = \{ c b \}$ for $c \in C$. In order to construct the next levels of the skeleton $X^d$, since $S^{d-1}$ is homeomorphic to the frontier of the closed cube $\partial I^d = J^d \backslash I^d$, from Proposition \ref{difeo from cube to cell} we define $\varphi_u = \Psi_u|_{\partial I^d}$. By induction on the length, we can construct all the skeleton of $K$. Note that $X^{d}$ when $d$ has maximum dimension equals $K$ and that there are $|C|$ cells of highest dimension since these correspond to the $u \in U$, such that $ \pi (u) = w^-$, the principal involution.

As a consequence of the second equation in the Proposition \ref{difeo from cube to cell}, we have the following construction. Let $d = \textrm{dim } \S (u) = \textrm{dim } \B (u) $. The sphere $S^d$ is the quotient $J^d / \partial I^d $, where the boundary $ \partial I^d = J^d \backslash I^d$ is collapsed to a point. We can do the same to the Schubert cell $\S (u)$. {\em Define}
\[
\sigma_u := \S (u) / \partial \B (u)
\]
that is, the space obtained by identifying the complement of the Bruhat cell $ \B (u)$ in $\S (u)$ to a point. Since $\Psi_u$ is a diffeomorphism from $I^d$ to $\B (u)$ and since $\Psi_u ( \partial I^d ) \subset \partial \B (u) $, it follows that $\Psi_u$ induces a map $S^d \to \sigma_u$ which is a homeomorphism. The inverse of this homeomorphism will be denoted by
\begin{equation} \label{inverse}
\Psi_u^{-1} : \sigma_u \to S^d
\end{equation}
note that this is not the same as the inverse of $\Psi_u$ because of the collapse in the corresponding borders.

The following is Proposition 1.10 of \cite{lonardo} and the remark after it for Schubert cells in $\F$ and it will be useful later on. Remember that all roots have multiplicity one for split real forms.

\begin{proposition}
Let $w$, $w' \in W$. The following statements are equivalent:

\begin{enumerate}
\item $\S(w') \subset \S(w) \subset \F $ and ${ \rm dim} \, \S(w) - { \rm dim} \, \S(w') = 1$.

\item If $w = r_1 \cdots r_d$ is a reduced expression of $w \in W$ as a product of simple reflections and for some $i$, $w' = r_1 \cdots r_{i-1}r_{i+1} \cdots r_d$ is also a reduced expression.
\end{enumerate}
In this case, there is a unique integer $i$ such that $w' = r_1 \cdots r_{i-1}r_{i+1} \cdots r_d$.
\end{proposition}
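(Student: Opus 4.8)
The plan is to translate the geometric statement entirely into the combinatorics of the Bruhat--Chevalley order, using the dictionary already set up in the excerpt. Recall that $\S(w') \subset \S(w)$ holds precisely when $w' \leq w$, since $\S(w) = \coprod_{w'' \leq w} \B(w'')$, and that $\dim \S(w) = \dim \B(w) = \ell(w)$. Thus statement (1) is equivalent to saying that $w' \leq w$ together with $\ell(w) - \ell(w') = 1$, i.e.\ that $w'$ is covered by $w$ in the Bruhat order. The whole proposition then becomes the standard characterization of the covering relation, which I would record citing Proposition 1.10 of \cite{lonardo}.

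For the implication $(1) \Rightarrow (2)$, I would start from $w' < w$ with $\ell(w') = \ell(w) - 1 = d-1$. Fixing a reduced expression $w = r_1 \cdots r_d$ and letting $S$ be the set of indices $i$ for which $w_i = r_1 \cdots r_{i-1} r_{i+1} \cdots r_d$ is reduced (so that $\ell(w_i) = d-1$), I invoke the Proposition of section 5.11 of \cite{humphreys}: since $w' < w$, there is some $i \in S$ with $w' \leq w_i$. As $\ell(w') = \ell(w_i) = d-1$ and any strict Bruhat inequality strictly drops the length, $w' \leq w_i$ forces $w' = w_i$, which is exactly (2). The converse $(2) \Rightarrow (1)$ is immediate from the subword characterization of the Bruhat--Chevalley order: if $w' = r_1 \cdots r_{i-1} r_{i+1} \cdots r_d$ is reduced, then choosing the subword indexed by $1, \ldots, i-1, i+1, \ldots, d$ shows $w' \leq w$, hence $\S(w') \subset \S(w)$, while $\dim \S(w) - \dim \S(w') = d - (d-1) = 1$.

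For the final uniqueness claim the key ingredient is the explicit description of $\Pi_w$ in \eqref{vara}. Writing $w = a\, r_i\, b$ with $a = r_1 \cdots r_{i-1}$ and $b = r_{i+1} \cdots r_d$, one computes $w w_i^{-1} = a\, r_i\, a^{-1}$, the reflection $t_{\gamma_i}$ in the positive root $\gamma_i = r_1 \cdots r_{i-1} \alpha_i$. These roots $\gamma_1, \ldots, \gamma_d$ are exactly the $d$ distinct elements of $\Pi_w$ listed in \eqref{vara}. Hence if deletions at two positions $i \neq j$ produced the same $w' = w_i = w_j$, then $t_{\gamma_i} = w w_i^{-1} = w w_j^{-1} = t_{\gamma_j}$, forcing $\gamma_i = \gamma_j$ (two positive roots defining the same reflection must coincide) and therefore $i = j$.

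I expect the uniqueness step to be the main obstacle, since it is the only place where one must pass beyond the purely order-theoretic dictionary and use root-system data, specifically the distinctness of the roots in $\Pi_w$ and the correspondence between single-letter deletions and reflections $w w_i^{-1}$. Everything else reduces cleanly to facts already available in the excerpt: the identification of Schubert-cell inclusions with the Bruhat order, the grading of that order by length, and the covering-relation lemma from \cite{humphreys}.
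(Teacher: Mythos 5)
Your proposal is correct, but note that the paper itself gives no proof of this statement: it is quoted verbatim as Proposition 1.10 of \cite{lonardo} (together with the remark following it), so there is no internal argument to compare against. What you have written is a sound, self-contained proof assembled from exactly the ingredients the paper deploys elsewhere: the dictionary $\S(w') \subset \S(w) \iff w' \leq w$ (from $\S(w) = \coprod_{w'' \leq w} \B(w'')$ and disjointness of Bruhat cells), $\dim \S(w) = \ell(w)$ (valid here because all root multiplicities are one in the split case), the Proposition of section 5.11 of \cite{humphreys} (which the paper itself invokes in the proof of its Proposition \ref{prop-order}), and, for uniqueness, the explicit enumeration $\Pi_w = \{\alpha_1, r_1\alpha_2, \dots, r_1 \cdots r_{d-1}\alpha_d\}$ of equation (\ref{vara}) with $|\Pi_w| = \ell(w) = d$ guaranteeing the $\gamma_i$ are pairwise distinct. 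The covering-relation translation and both implications are handled correctly (the length equality $\ell(w') = \ell(w_i)$ together with $w' \leq w_i$ forcing $w' = w_i$ is right, since strict Bruhat inequality strictly decreases length), and the identity $w w_i^{-1} = a r_i a^{-1} = r_{\gamma_i}$ with $\gamma_i = r_1 \cdots r_{i-1}\alpha_i$ is the standard and correct mechanism for uniqueness. The only point worth making explicit is that $t_{\gamma_i} = t_{\gamma_j}$ forces $\gamma_j = \pm\gamma_i$ because the restricted root system of a split real form is reduced (no root is a proper positive multiple of another); with both roots in $\Pi_w \subset \Pi^+$ this gives $\gamma_i = \gamma_j$ and hence $i = j$, exactly as you argue.
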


Let $\pi (u) = w = r_1 \cdots r_d$ and let $v \in U$ such that $\pi (v) < \pi (u)$ in the Bruhat order and the length of $\pi (v)$ is equal to $d-1$ then by Theorem 5.10 of \cite{humphreys}, $w' = \pi (v) = r_1 \cdots \widehat{r_i} \cdots r_d$ for a unique integer $i$ as the next Lemma shows. First, note that this expression for $\pi (v)$ is necessarily reduced, since the length of $\pi (v)$ is $d-1$.

Let $\mathcal{C}$ be the $\Z$-module freely generated by $\B (u)$, $u \in U$. The {\em boundary maps} $\delta : \mathcal{C} \to \mathcal{C}$ are defined by
\[
\delta \B (u) := \sum_{v \in V (u)} \rho (u, v) \B (v)
\]
where $\rho (u, v) := 0$, if ${\rm dim } \B (u) - {\rm dim } \B (v) \neq 1$ and
\[
\rho (u, v) := {\rm deg} \left( \phi_{u, v}: S_u^{d-1} \to S_v^{d-1} \right)
\]
if ${ {\rm dim} \B (u) - {\rm dim} \B (v) = 1}$, where the map $\phi_{u, v}$ is the {\em composition of the following maps}:

\vspace{4 mm}

(a) the attaching map $\Psi_u|_{\partial I^d} : S_u^{d-1} \cong \partial I^d   \to \partial \B (u) = \S (u) \backslash \B (u)$.

\vspace{3 mm}

(b) the quotient map $\partial \B (u) \to \partial \B (u)  / (\partial \B (u) \backslash \B (v) ) $, where we take the cell $\B (v)$ and identify its complement in $\partial \B (u) = \S (u) \backslash \B (u) $ to a point.

\vspace{3 mm}

(c) the identification $ \partial \B (u)  / (\partial \B (u) \backslash \B (v) ) \cong \S (v) / \partial \B (v) = \sigma_v$, where the last equality comes from the definition of $\sigma_v$.

\vspace{3 mm}

(d) the map defined by equation (\ref{inverse}), $\Psi_v^{-1} : \sigma_v \to S_v^{d-1}$.

\vspace{4 mm}

{\em Remark }

\vspace{3 mm}

We need to compute the degree
\[
\rho (u, v) = {\rm deg} \left( \phi_{u, v}: S_u^{d-1} \to S_v^{d-1} \right)
\]
when $u = s_1 \cdots s_d c $, for some $c \in C$, and
\[
v = u_i^0 = s_1 \cdots s_{i-1}s_{i+1} \cdots s_d c
\qquad
\mbox{or}
\qquad
v = u_i^1 = s_1 \cdots s_{i-1}c_is_{i+1} \cdots s_d c
\]
where $\pi(u) = w = r_1 \cdots r_d$ and $\pi(u_i^0) = \pi(u_i^1) = w_i = r_1 \cdots r_{i-1}r_{i+1} \cdots r_d$ are reduced expressions. We will determine the degree of the map $\phi_{u, v}$ in three steps.

\vspace{3 mm}

{\em Step 1: Domain and co-domain spheres.}

\vspace{3 mm}

First we identify the spheres $S_u^{d-1}$ in the domain and $S_v^{d-1}$ in the co-domain.
Remember that the ``closed ball'', $B^d_u = J^d$, where $J = [0, \pi]$, as in Theorem \ref{difeo from cube to cell} and the domain of $\phi _{u, v}$ is the frontier of $J^d$:
\[
S_u^{d-1} = \{ (t_1, \dots , t_d): t_i = 0 \mbox{ or } \pi \mbox{ for some } i \}
\]
or the union of the ``faces'' of the closed cube $J^d$. On the other hand, let $B^{d-1}_v = J^{d-1}$. The co-domain is the sphere $S_v^{d-1}$ obtained by collapsing to a point the boundary of $B^{d-1}_v = J^{d-1}$. This is seen in the items (c) and (d) in the definition of $\phi_{u, v}$.

\vspace{3 mm}

{\em Step 2: $\sigma_v$ in the image $\Psi_u (S_u^{d-1})$.}

\vspace{3 mm}

The second step is to see how $\sigma_v$ sits inside the image $\Psi_u (S_u^{d-1})$. Lemma \ref{lemma-faces} shows what is the pre-image of $N v b$ under $\Psi_u$. The pre-image of $\B (v) $ is contained in $B^d_u$ and is the interior of one of the two faces corresponding to the $i$th coordinate, that is, the faces where $t_i = 0$ in the case $v = u_i^0$ or $t_i = 1$ in the case $v = u_i^1$. In the quotient $\sigma_v = \S (v) / \partial \B (v) $ all the faces of $\partial B^d_u$ corresponding to the $j$th coordinates $j \neq i$ collapse to a point.

\vspace{3 mm}

{\em Step 3: Degrees.}

\vspace{3 mm}

The degree of a map can be computed as the local degree in the inverse image of of a regular value which has a finite number of points (see \cite{hatcher}, Proposition 2.30). The degree of $\phi_{u, v}$ is then the local degree of
\[
f_i^0 = {\Psi}_{u_i^0}^{-1} \circ \Psi_u|_{I_i^0}  : I_i^0 \to I^{d-1}
\]
where
\[
I_i^0 = I^{i-1} \times \{0\} \times I^{d-i}
\]
in the case $v = u_i^0$ or the local degree of
\[
f_i^1 = {\Psi}_{u_i^1}^{-1} \circ \Psi_u|_{I_i^1}  : I_i^1 \to I^{d-1}
\]
where
\[
I_i^1 = I^{i-1} \times \{1\} \times I^{d-i}
\]
in the case $v = u_i^1$. The maps $f_i^0$ and $ f_i^1$ are diffeomorphisms so their local degrees are $\pm 1$.

\section{Algebraic expressions for the degrees}

Here we follow closely \cite{lonardo}in order to compute the coefficients $\rho(u, v)$ in terms of the roots by finding the local degrees of the maps involved. For a diffeomorphism $\varphi$ of the sphere its degree is the local degree at a point $x$ which in turn is the sign of the determinant $\det d \varphi_x$ with respect to a volume form of $S^d$. Let us apply this in our context. We let $u = s_1 \cdots s_d c$ and $v = u_i^0$ or $v = u_i^1$. We must find the degree of $f_i^0$ or the degree of $f_i^1$. In the definition of $f_i^0$ and $f_i^1$ appears, respectively, the maps $\Psi_{u_i^0}$ and $\Psi_{u_i^1}$, which are given through a reduced expression of $w_i = \pi(u_i^0) = \pi(u_i^1)$ which was fixed in the construction of the skeleton and which, in principle, can be different from the reduced expression $w_i = r_1 \cdots \widehat{r_i} \cdots r_d$. On the other hand, the reduced expression $w_i = r_1 \cdots \widehat{r_i} \cdots r_d$ can be used to define another characteristic maps, which will be denoted by $\Phi_{u_i^0}$ and $\Phi_{u_i^1}$. This new characteristic map can then be used to define new functions
\[
p_i^0 = {\Phi}_{u_i^0}^{-1} \circ \Psi_u|_{I_i^0}  : I_i^0 \to I^{d-1}
\]
in the case $v = u_i^0$ and
\[
p_i^1 = {\Phi}_{u_i^1}^{-1} \circ \Psi_u|_{I_i^1}  : I_i^1 \to I^{d-1}
\]
in the case $v = u_i^1$. The two pairs of functions are then related by
\[
f_i^{\epsilon} = \left( {\Psi}_{u_i^\epsilon}^{-1} \circ {\Phi}_{u_i^\epsilon} \right) \circ p_i^{\epsilon}, \quad \epsilon = 0,1
\]
The composition ${\Psi}_{u_i^\epsilon}^{-1} \circ {\Phi}_{u_i^\epsilon}$ (also understood as a map between spheres in which the boundary is collapsed to points) is an homeomorphism of spheres and, hence, has degree $\pm 1$. The term ${\Psi}_{u_i^\epsilon}^{-1} \circ {\Phi}_{u_i^\epsilon}$ can disappear if in the graph of the Bruhat order it is possible to make so that every word immediately bellow is a subword of the word above. When the Bruhat order graph grows more complicated this is not possible, see \cite{lonardo2}.

Before getting these degrees we make the following discussion on the orientation of the faces of the cube $[-1,1]^d$, centered on the origin of $\R^d$, which is given with the basis $\{ e_1,\dots, e_d \}$. Afterwards we will use the cube $[0,1]^d$, since there is a translation and cagnification taking one into the other, we have corresponding orientations. Starting with the $(d-1)$-dimensional sphere $S^{d-1}$ we orient the tangent space at $x \in S^{d-1}$ by a basis $\{f_2, \dots , f_d \}$ such that $\{x, f_2, \dots , f_d \}$ is positively oriented. The faces of $B = [-1,1]^d$ are oriented accordingly: given a base vector $e_i$, we let $B_i^-$ be the face perpendicular to $e_i$ that contains $-e_i$ and $B_i^+$ be the one that contains $e_i$. Then $B_i^-$ has the same orientation as the basis $e_1, \dots, \widehat{e_i}, \dots, \e_d$ if $i$ is even (since $-e_i, e_1, \dots, \widehat{e_i}, \dots, e_d$ is positively oriented if $i$ is even). So that the orientation of $B_i^-$ is $(-1)^i$ the orientation of $e_1, \dots, \widehat{e_i}, \dots, e_d$. And similarly the orientation of $B_i^+$ is $(-1)^{i+1}$ the orientation of $e_1, \dots, \widehat{e_i}, \dots, e_d$. In the case of one dimension the point $1$ is positive oriented and $-1$ is negative oriented, in two dimensions the sides of the squares are oriented counter-clockwise and in the cube the faces are oriented following the right hand rule, we will call these orientations the { \em standard} orientation.

The following facts, which is Lemma 2.4 from \cite{lonardo} with minor changes, will be used below in the computation of the degrees.

\begin{lemma} \label{diferencial do m}
For a root $\alpha$ consider the action on $K$ of $c = c_\alpha = \exp( F_{\alpha})$. Then
\begin{enumerate}
\item $c v b = v v^{-1} c v b = v c' b$, for $c' = v^{-1} c v \in M$, and $c N c^{-1} = N$

\item the restriction of $c$ to $N v b = \B(v)$ is a diffeomorphism from $\B(v)$ to $\B(v c')$.

\item the differential $d c_{v b}$ identifies to the action of $c$ restricted to the subspace
\[
\sum_{\beta \in \Pi_{\pi(v)}} \g_\beta
\]

\end{enumerate}
\end{lemma}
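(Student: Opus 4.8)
The plan is to dispatch the three items in order, with item (3) carrying essentially all the weight.

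\emph{Item (1).} I would first write the formal identity $cvb = v(v^{-1}cv)b = vc'b$ with $c' := v^{-1}cv$, so that the real content is the membership $c' \in M$ together with $cNc^{-1} = N$. Since $c = c_\alpha \in C = M$ and $v \in U = M^*$, the claim $c' \in M$ follows from the fact that $M^*$ normalizes $M$: for $m \in M$ and $H \in \a$ one has $\Ad(v^{-1}mv)H = \Ad(v^{-1})\Ad(m)\Ad(v)H = \Ad(v^{-1})\Ad(v)H = H$, using that $\Ad(v)H \in \a$ (as $v$ normalizes $\a$) and that $m$ fixes $\a$ pointwise. For $cNc^{-1} = N$ I would use that $c \in M$ centralizes $\a$: for $X \in \g_\beta$ and $H \in \a$, $\ad(H)\Ad(c)X = \Ad(c)\ad(H)X = \beta(H)\Ad(c)X$, so $\Ad(c)$ preserves every root space $\g_\beta$ and hence preserves $\n = \sum_{\beta \in \Pi^+}\g_\beta$; as $N = \exp(\n)$, this gives $cNc^{-1} = \exp(\Ad(c)\n) = N$.

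\emph{Item (2).} This is immediate from item (1). The action of $c$ on $K = G/AN$ is by left translation, so $c \cdot \B(v) = cNvb = (cNc^{-1})(cvb) = N\,vc'b = \B(vc')$. Left translation by $c$ is a diffeomorphism of $K$ with inverse left translation by $c^{-1}$, so its restriction to $\B(v)$ is a diffeomorphism onto $\B(vc')$.

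\emph{Item (3).} Here is the core. The first task is to identify $T_{vb}\B(v)$ with $\sum_{\beta \in \Pi_{\pi(v)}}\g_\beta$. Writing $w = \pi(v)$, the infinitesimal action $Y \mapsto Y \cdot vb$ carries $\n$ onto the tangent space of the $N$-orbit $\B(v) = Nvb$, with kernel $\n \cap \g_{vb} = \n \cap \Ad(v)(\a \oplus \n)$. Using that $v$ normalizes $\a$ and that $\Ad(v)\g_\beta = \g_{w\beta}$, I would compute $\Ad(v)(\a \oplus \n) = \a \oplus \sum_{\gamma \in w\Pi^+}\g_\gamma$, whence the kernel equals $\sum_{\gamma \in \Pi^+ \cap w\Pi^+}\g_\gamma$. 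Since $\Pi^+ = (\Pi^+ \cap w\Pi^+) \sqcup (\Pi^+ \cap w\Pi^-)$ and $\Pi^+ \cap w\Pi^- = \Pi_w$, the subspace $\sum_{\beta \in \Pi_w}\g_\beta$ is a complement to the kernel inside $\n$, so it maps isomorphically onto $T_{vb}\B(v)$; the identical description holds at $vc'b$ because $\pi(vc') = w$.

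With these identifications fixed, item (3) drops out of the equivariance of the infinitesimal action recorded in the preliminaries, $g(Y \cdot x) = \Ad(g)Y \cdot gx$. For $Y \in \sum_{\beta \in \Pi_w}\g_\beta$ one gets $dc_{vb}(Y \cdot vb) = \Ad(c)Y \cdot cvb = \Ad(c)Y \cdot vc'b$, and since $c \in M$ preserves each $\g_\beta$ the vector $\Ad(c)Y$ again lies in $\sum_{\beta \in \Pi_w}\g_\beta$. Read through the two identifications, $dc_{vb}$ is therefore exactly $\Ad(c)$ restricted to $\sum_{\beta \in \Pi_w}\g_\beta$. I expect the only delicate point to be the bookkeeping in this tangent-space identification: checking that $\sum_{\beta \in \Pi_w}\g_\beta$ really is complementary to $\n \cap \Ad(v)(\a \oplus \n)$, so that the quotient $T_{vb}\B(v)$ is realized as an honest subspace of $\g$, and that the chosen identification agrees at the source $vb$ and the target $vc'b$. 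Once that is in place, the equivariance formula finishes the argument with no further computation.
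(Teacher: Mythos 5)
Your proposal is correct and follows essentially the same route as the paper: both rest on identifying $T_{vb}\B(v)$ with $\sum_{\beta \in \Pi_{\pi(v)}} \g_\beta$ via the infinitesimal action and then invoking equivariance together with the $\Ad(c)$-invariance of each root space. The only (cosmetic) difference is that you compute the isotropy kernel $\n \cap \Ad(v)(\a \oplus \n)$ directly at $vb$, whereas the paper conjugates the orbit to the base point $b$ and pushes forward by $dv$; your item (1) also spells out the normalization facts the paper leaves implicit.
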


\begin{proof}
From Lemma \ref{acao do m} for $\beta \in \Pi$ then $c_{\alpha} \g_\beta = \g_\beta$. Since
\[
c N v b = c N c ^{-1} c v b = N c v b = N v v^{-1} c v b = N v c' b
\]
where $c' = v^{-1} c v \in M$.

For the third statement we use the notation $X \cdot k = d/dt (\exp (t X))|_{t=0}$, for $k \in K$ and $X \in \g$. Also, for $A \subset \g$ let $A \cdot k = \{ X \cdot k : X \in A$ \}.

Note that $N v b = v v^{-1} N v b$ and the tangent space to $v^{-1} N v b$ at $b$ is spanned by $\g_\gamma \cdot b$ with $\gamma < 0$ such that $\gamma = v^{-1} \beta$ and $ \beta >0$, that is, $v \gamma >0$. Since $(d v) (\g_\gamma \cdot b) = \g_{v \gamma} \cdot v b$, it follows that the tangent space $T_{v b} (N v b)$ is spanned by $\g_\beta \cdot b$ where $\beta = v \gamma >0$ such that $v^{-1} \beta = \gamma <0$. Hence the result.
\end{proof}

And the following is Lemma 1.8 from \cite{lonardo}.

\begin{lemma} \label{acao do m}
The root spaces $\g_{\beta}$ are invariant by the action of $c_\alpha$ and
\[
(c_\alpha)|_{\g_\beta} = (-1)^{\epsilon(\alpha,\beta)} {\rm id}
\]
where
\[
\epsilon(\alpha,\beta)
=
\frac{2 \langle \alpha , \beta \rangle} {\langle \alpha, \alpha \rangle}
\]
is an integer.
\end{lemma}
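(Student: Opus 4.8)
The plan is to reduce everything to the rank-one subalgebra $\g(\alpha) = \g_\alpha \oplus \R H_\alpha \oplus \g_{-\alpha} \cong \sl(2,\R)$ and to read off the scalar from $\sl(2)$-representation theory. First I would establish invariance: since $c_\alpha = \exp(F_\alpha) \in C = M$ is the centralizer of $\a$ in $K$, the operator $\Ad(c_\alpha)$ commutes with $\ad(H)$ for every $H \in \a$, and hence preserves each joint eigenspace $\g_\beta = \{X : \ad(H)X = \beta(H)X \text{ for all } H \in \a\}$. Because $\g$ is a split real form, $\dim_\R \g_\beta = 1$, so $\Ad(c_\alpha)|_{\g_\beta}$ is automatically multiplication by a real scalar $\lambda_\beta$; and since $\Ad(M)$ is a cartesian product of groups of order two, $\lambda_\beta = \pm 1$. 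It then remains only to pin down the sign.

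Next I would compute $\lambda_\beta$ inside the complexified copy $\g(\alpha)^\C \cong \sl(2,\C)$. Writing the standard basis $h,e,f$ with $\alpha(h) = 2$ and $\theta e = -f$, the normalization $\langle E_\alpha, \theta E_\alpha\rangle = 2\pi^2/\langle H_\alpha,H_\alpha\rangle$ is precisely the one that identifies $E_\alpha$ with $\pi e$, so that $\theta E_\alpha$ becomes $-\pi f$ and $F_\alpha = E_\alpha + \theta E_\alpha$ corresponds to $\pi(e-f)$, the rotation generator scaled by $\pi$. A direct exponentiation then gives $\exp(F_\alpha/2) = s_\alpha$ acting on $\a$ as the reflection $r_\alpha$ (as it must, since $\pi(s_\alpha) = r_\alpha$) and
\[
c_\alpha = \exp(F_\alpha) = \exp(\pi(e-f)) = -\id
\]
as an element of the rank-one group generated by $\g(\alpha)$. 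This is the cleanest formulation of the computation: $c_\alpha$ is the nontrivial central element of that copy of $\Sl(2,\C)$.

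Finally I would invoke the decomposition of $\g^\C$ into irreducible $\ad(\g(\alpha))$-modules, on each of which the central element $-\id$ acts on the $\ad(h)$-weight-$k$ subspace by $(-1)^k$, all weights along a given $\alpha$-string having the same parity. The root space $\g_\beta$ is exactly the weight space of weight $\beta(h) = \tfrac{2\langle\alpha,\beta\rangle}{\langle\alpha,\alpha\rangle} = \epsilon(\alpha,\beta)$, which is an integer by the standard $\sl(2)$-string argument (this gives the integrality assertion of the lemma for free). Hence $\lambda_\beta = (-1)^{\epsilon(\alpha,\beta)}$, establishing both the invariance and the formula simultaneously. The main obstacle is the middle step: verifying that the chosen normalization of $E_\alpha$ forces $F_\alpha$ to be exactly $\pi$ times the rotation generator, so that $c_\alpha$ is the central element $-\id$ rather than some other element of the torus; once that identification is secured, the sign computation is a routine application of $\sl(2)$ weight theory.
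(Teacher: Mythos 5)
Your proof is correct and follows essentially the same route as the paper, which states this lemma without its own proof by importing Lemma 1.8 of \cite{lonardo}: there, as in your argument, one identifies $c_\alpha = \exp(F_\alpha)$ with $\gamma_\alpha = \exp(i\pi H_\alpha^\vee)$ (the image of the central element $-\id$ of the rank-one copy of $\sl(2)$), which acts on the weight space $\g_\beta$ by $e^{i\pi\beta(H_\alpha^\vee)} = (-1)^{\epsilon(\alpha,\beta)}$, with integrality of $\epsilon(\alpha,\beta)$ coming from the same $\alpha$-string weight theory. Your preliminary reductions (invariance of $\g_\beta$ since $c_\alpha \in M$ centralizes $\a$, the scalar being $\pm 1$ via $\dim\g_\beta = 1$ and $\Ad(M) = F$, and the normalization $E_\alpha = \pi X_\alpha$ forcing $F_\alpha \leftrightarrow \pi(e-f)$) match the paper's own remarks in Sections 2 and 4.
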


The following is Proposition 2.5 from \cite{lonardo} only separating the degrees, changing notation, and observing that $I = i$, since in our case the multiplicities of roots are always $1$.

\begin{proposition} \label{degrees}
The degrees are:
\begin{enumerate}
\item ${\rm deg}(p_i^0) = (-1)^i$.

\item ${\rm deg}(p_i^1) = (-1)^{i+1+\sigma}$, where
\end{enumerate}
\begin{equation}\label{eq:sigma}
\sigma = \sum_{\beta \in \Pi_{w_i'}} \frac{2\langle \alpha_i, \beta \rangle}{\langle \alpha_i, \alpha_i \rangle}
\end{equation}
with $\Pi_{w_i'} = \Pi^+ \cap w_i' \Pi^-$ and $w_i' = r_{i+1} \cdots r_d$.
\end{proposition}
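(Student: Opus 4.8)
We need to compute two local degrees. $p_i^0 = \Phi_{u_i^0}^{-1}\circ\Psi_u|_{I_i^0}$ restricted to the face $t_i=0$, and $p_i^1 = \Phi_{u_i^1}^{-1}\circ\Psi_u|_{I_i^1}$ restricted to the face $t_i=1$. Both are diffeomorphisms onto $I^{d-1}$, so each has degree $\pm 1$; the content is to pin down the sign. The claim is $\deg(p_i^0)=(-1)^i$ and $\deg(p_i^1)=(-1)^{i+1+\sigma}$.

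Let me think about the structure.

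For $p_i^0$: From Lemma \ref{lemma-faces}, on the face $t_i=0$,
$$\Psi_u(t_1,\dots,t_{i-1},0,t_{i+1},\dots,t_d) = \Psi_{u_i^0}(t_1,\dots,t_{i-1},t_{i+1},\dots,t_d).$$
So $\Psi_u|_{I_i^0}$ IS $\Psi_{u_i^0}$ under the coordinate identification dropping the $i$-th coordinate. If $\Phi_{u_i^0}=\Psi_{u_i^0}$ (same reduced expression), then $p_i^0$ is just the map $(t_1,\dots,\hat{t_i},\dots,t_d)\mapsto(t_1,\dots,t_{d-1})$ — the identity in the obvious ordering. But the degree is about ORIENTATION. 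The face $t_i=0$ is $B_i^-$ in the orientation discussion, which carries orientation $(-1)^i$ times the standard orientation of $e_1,\dots,\hat e_i,\dots,e_d$. The codomain $I^{d-1}$ carries the standard orientation. So the degree is exactly $(-1)^i$. That's clean — it's purely the orientation-of-faces computation already laid out in the paragraph before Lemma \ref{diferencial do m}.

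For $p_i^1$: On the face $t_i=1$,
$$\Psi_u(t_1,\dots,t_{i-1},1,t_{i+1},\dots,t_d) = \psi_1(t_1)\cdots\psi_{i-1}(t_{i-1})\,c_i\,\psi_{i+1}(t_{i+1})\cdots\psi_d(t_d)\,c.$$
Using Lemma \ref{trocando m} repeatedly to push $c_i$ to the right past $\psi_{i+1},\dots,\psi_d$, we get reflections $t_j\mapsto\phi_j(t_j)$ (either $t_j$ or $1-t_j$) and a leftover element, landing in $\Psi_{u_i^1}(\dots,\phi_j(t_j),\dots)$. So $p_i^1$, relative to $\Psi_{u_i^1}$, is the composite of: (i) the face orientation $B_i^+$, carrying orientation $(-1)^{i+1}$; and (ii) the coordinate reflections $t_j\mapsto\phi_j(t_j)$ for $j>i$, whose Jacobian sign is $(-1)^{(\text{number of reflected coordinates})}$.

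Here is where $\sigma$ enters. By Lemma \ref{trocando m}, coordinate $j$ gets reflected precisely when $\Ad(c_i)F_j=-F_j$, i.e. when $c_i$ acts by $-1$ on $\g_{\alpha_j}$ — but after conjugating through the preceding factors. More precisely, pushing $c_i$ rightward, at stage $j$ the relevant root is $r_{i+1}\cdots r_{j-1}\alpha_j$, and by Lemma \ref{acao do m} the sign is $(-1)^{\epsilon(\alpha_i,\,r_{i+1}\cdots r_{j-1}\alpha_j)}$. The total number of reflections mod $2$ is thus
$$\sum_{j>i}\epsilon(\alpha_i,\,r_{i+1}\cdots r_{j-1}\alpha_j)\ \equiv\ \sum_{\beta\in\Pi_{w_i'}}\frac{2\langle\alpha_i,\beta\rangle}{\langle\alpha_i,\alpha_i\rangle}=\sigma\pmod 2,$$
where $w_i'=r_{i+1}\cdots r_d$ and I have used the explicit enumeration of $\Pi_{w}$ from equation (\ref{vara}) applied to $w_i'$: its roots are exactly $\alpha_{i+1},\,r_{i+1}\alpha_{i+2},\dots,r_{i+1}\cdots r_{d-1}\alpha_d$. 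Combining the face sign with the reflection sign gives $\deg(p_i^1)=(-1)^{i+1}\cdot(-1)^\sigma=(-1)^{i+1+\sigma}$.

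\textbf{Plan and main obstacle.} The plan is: first, set up both maps explicitly as diffeomorphisms using Lemma \ref{lemma-faces}; second, for $p_i^0$ read off the sign directly from the face-orientation computation ($B_i^-$ contributes $(-1)^i$); third, for $p_i^1$ decompose the sign into the face contribution $(-1)^{i+1}$ from $B_i^+$ times the Jacobian sign of the coordinate reflections; fourth, identify the parity of the number of reflected coordinates with $\sigma$ via Lemmas \ref{trocando m} and \ref{acao do m} together with the root enumeration (\ref{vara}). The main obstacle is the bookkeeping in the fourth step: I must verify that as $c_i$ is transported rightward through $\psi_{i+1}\cdots\psi_d$, the root governing the reflection at position $j$ is indeed $r_{i+1}\cdots r_{j-1}\alpha_j$ (so that the accumulated conjugations match the roots listed in $\Pi_{w_i'}$), and that $\epsilon(\alpha_i,\beta)$ controls the sign of each reflection through $\Ad(c_i)$ acting on $\g_\beta$. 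Once this identification of the reflection set with $\Pi_{w_i'}$ is established, summing the integers $\epsilon(\alpha_i,\beta)$ modulo $2$ yields $\sigma$ and the result follows. This is essentially Proposition 2.5 of \cite{lonardo} transported to our setting, the only new point being that $I=i$ because all root multiplicities equal one for a split real form.
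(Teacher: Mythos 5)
Your proposal is correct, and for item (1) it coincides with the paper's own proof: $p_i^0$ is the coordinate projection from the face $t_i=0$, and the sign $(-1)^i$ is read off from the orientation of the face $B_i^-$, exactly as in the paper. For item (2) you take a genuinely different route to the factor $(-1)^\sigma$. Both you and the paper split ${\rm deg}(p_i^1)=(-1)^{i+1}\cdot(\mbox{sign of the $c_i$-twist})$ using the orientation of $B_i^+$ and Lemma \ref{trocando m}; but the paper evaluates the twist as the local degree of $\overline{c}_i$ at the single point $u_i'b$, namely as ${\rm sgn}\,{\rm det}\bigl(d(c_i)_{u_i'b}|_{T_{u_i'b}(Nu_i'b)}\bigr)$, invoking the third statement of Lemma \ref{diferencial do m} to identify $T_{u_i'b}(Nu_i'b)$ with $\sum_{\beta\in\Pi_{w_i'}}\g_\beta$ and then Lemma \ref{acao do m} on each summand, whereas you never differentiate: you count the coordinates $j>i$ that get reflected and match them against the enumeration (\ref{vara}) of $\Pi_{w_i'}$. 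Your version is more elementary (Lemma \ref{diferencial do m} is not needed at all) and makes the role of (\ref{vara}) explicit, at the price of the bookkeeping you yourself flag as the main obstacle: at stage $j$ the element being transported is the accumulated product $m_{j-1}=c_{j_k}\cdots c_{j_1}c_i$, not $c_i$ itself, so the claim that the reflection at position $j$ is governed by $\epsilon(\alpha_i,\,r_{i+1}\cdots r_{j-1}\alpha_j)$ requires an argument you only sketch with the phrase ``after conjugating through the preceding factors''. It is, however, a two-line fix: since every $\phi_k$ fixes $1/2$, evaluating the transport identity $c_i\,\psi_{i+1}(t_{i+1})\cdots\psi_{j-1}(t_{j-1})=\psi_{i+1}(\phi_{i+1}(t_{i+1}))\cdots\psi_{j-1}(\phi_{j-1}(t_{j-1}))\,m_{j-1}$ at $t_k=1/2$ gives $m_{j-1}=(s_{i+1}\cdots s_{j-1})^{-1}c_i\,(s_{i+1}\cdots s_{j-1})$, and since $\Ad(s_{i+1}\cdots s_{j-1})\g_{\alpha_j}=\g_{\beta_j}$ with $\beta_j=r_{i+1}\cdots r_{j-1}\alpha_j$, the sign of $\Ad(m_{j-1})$ on $\g_{\alpha_j}$ equals that of $\Ad(c_i)$ on $\g_{\beta_j}$, which is $(-1)^{\epsilon(\alpha_i,\beta_j)}$ by Lemma \ref{acao do m}. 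With that verification inserted, your count $\#\{\mbox{reflected coordinates}\}\equiv\sum_{\beta\in\Pi_{w_i'}}\epsilon(\alpha_i,\beta)=\sigma \pmod 2$ is complete, and the conclusion ${\rm deg}(p_i^1)=(-1)^{i+1+\sigma}$ follows; this agrees with the paper's answer while replacing its tangent-space computation by a direct combinatorial one.
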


\begin{proof}
On one hand, the map $p_i^0$ is the projection of the face of a $d$-dimensional cube onto the face of a $d-1$-dimensional cube given by
\[
p_i^0(t_1, \dots, t_{i-1}, 0 , t_{i+1}, \dots, t_d) = (t_1, \dots, t_{i-1}, t_{i+1}, \dots t_d)
\]
Note that with respect to the basis $e_1, \dots, e_d$ the $t_i$-coordinate appears in the $i$th position. Hence, by the orientation of the cube, if $i$ is even or odd, the orientation is respectively, positive or negative. Therefore, ${\rm deg} (p_i^0) = (-1)^i$.

On the other hand, the map $p_i^1$ is given by
\[
p_i^1(t_1, \dots, t_{i-1}, 1 , t_{i+1}, \dots, t_d) = {\Phi}_{u_i^1}^{-1}(\psi_1(t_1) \cdots \psi_{i-1}(t_{i-1})c_i \psi_{i+1}(t_{i+1}) \cdots \psi_d(t_d))
\]
where $c_i = \psi_i(1)$. The left action of $c_i$ on $K$ takes any Bruhat cell $\B(v)$ to the cell $\B(v c_i')$, where $c_i' = v^{-1} c_i v$.
And hence it takes the Schubert cell $\S(v)$ to $\S(vc_i')$. Moreover the restriction of $c_i$ to $N v b$ is a diffeomorphism.

In particular, we restrict the action of $c_i$ to the cell $\S(u_i')$, $u_i' = s_{i+1} \cdots s_d c$. Its action on $K$ takes the Bruhat cell $\B(u_i')$ to the cell $\B(u_i' c_i')$ where $c_i' = u_i'^{-1} c_i u_i'$. By successive applications of Lemma \ref{trocando m}, we have that
\[
c_i \psi_{i+1}(t_{i+1}) \cdots \psi_d (t_d) cb = \psi_{i+1}(\phi_{i+1}(t_{i+1})) \cdots \psi_d (\phi_d(t_d)) c_i'c b
\]
where $\phi_j(t_j) = t_j$ or $\phi_j(t_j) = 1 - t_j$. We can define $\overline{c}_i : J^{d-i} \to J^{d-i}$ by
\[
\overline{c}_j (t_{j+1},\dots, t_d) =
(\phi_{i+1}(t_{i+1}), \dots, \phi_d(t_d))
\]
which is continuous and a diffeomorphism in $I^{d-i}$. Hence, $p_i^1 (t_1, \dots,1, \dots , t_d)$ becomes the projection of the $(i-1)$ first coordinates and the composition of $\overline{c}_i$ with the projection of the last $k$th coordinates, $k = i+1,\dots, d$. From the choice of the orientation of $J^d$, the face $(t_1, \dots, 1, \dots, t_d)$ of $J^d$ has orientation $(-1)^{i+1}$ with respect to the orientation of the coordinates $(t_1, \dots, t_{i-1}, t_{i+1}, \dots, t_d)$. Hence after collapsing the boundary to a point, we get the degree
\[
{\rm deg}\, p_i^1 = (-1)^{i+1} {\rm deg}\, \overline{c}_i
\]
The degree of $\overline{c}_i$ equals its degree at one point which in turn is the sign of the determinant of the differential $d(c_i)_{u_i'b}$ restricted to the tangent space of the Bruhat cell $\B(u_i') = N u_i' b$ at $u_i'b$:
\[
{\rm deg}\, p_i^1 = (-1)^{i+1} {\rm sgn} \left[ {\rm det} \, \left( d(c_i)_{u_i' b} |_{T_{u_i' b} (N u_i' b)} \right) \right]
\]
By the third statement in Lemma \ref{diferencial do m} $T_{u_i' b} (N u_i' b)$ identifies to $\sum_{\beta \in \Pi_{u_i'}} \g_\beta $. Once we have the generators $\g_\beta \cdot u_i' b$, $\beta \in \Pi_{u_i'}$ for $T_{u_i' b} (N u_i' b)$ together with the action of $c_i$ over $\g_\beta$ given by Lemma \ref{acao do m}, $c_\alpha |_{\g_{\beta}} = (-1)^{\epsilon (\alpha, \beta)} {\rm id}$, we conclude that the signal of ${\rm det} \, \left( d(c_i)_{u_i' b} |_{T_{u_i' b} (N u_i' b)} \right) = (-1)^\sigma$ where
\[
\sigma=\sum_{\beta \in \Pi_{u_i'}} \frac{2 \langle \alpha_i , \beta \rangle} {\langle \alpha_i, \alpha_i \rangle}
\]
\end{proof}

Summarizing, we have the following algebraic expression for the coefficient $\rho(u, v)$.

\begin{theorem} \label{degrees1}
Let $\sigma$ be as in Equation \ref{eq:sigma}. Then
\[
\rho(u, v) = {\rm deg} \left( {\Psi}_{u_i^0}^{-1} \circ {\Phi}_{u_i^0} \right) (-1)^i
\]
when $v = u_i^0$, and
\[
\rho(u, v) = {\rm deg} \left( {\Psi}_{u_i^1}^{-1} \circ {\Phi}_{u_i^1} \right) (-1)^{i + 1 + \sigma}
\]
when $v = u_i^1$.
\end{theorem}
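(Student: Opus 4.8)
The plan is to assemble the statement directly from the degree computations of Sections 5 and 6, using nothing beyond the multiplicativity of the topological degree. The starting point is the reduction established at the end of Section 5: by definition $\rho(u,v)$ is the degree of the map $\phi_{u,v}\colon S_u^{d-1}\to S_v^{d-1}$, and in the relevant cases $v=u_i^0$ or $v=u_i^1$ this degree is computed as the local degree of the diffeomorphism $f_i^0={\Psi}_{u_i^0}^{-1}\circ\Psi_u|_{I_i^0}$ or $f_i^1={\Psi}_{u_i^1}^{-1}\circ\Psi_u|_{I_i^1}$, respectively (computing the degree as a local degree at a regular value, as in Proposition 2.30 of \cite{hatcher}). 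Since these are diffeomorphisms onto their image face, the preimage of a regular value is a single point and the local degree coincides with the signed global degree $\pm 1$ of the induced map of $(d-1)$-spheres.

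First I would recall the factorization recorded at the beginning of Section 6, namely
\[
f_i^{\epsilon} = \left( {\Psi}_{u_i^\epsilon}^{-1} \circ {\Phi}_{u_i^\epsilon} \right) \circ p_i^{\epsilon}, \qquad \epsilon = 0,1,
\]
where $\Phi_{u_i^\epsilon}$ is the characteristic map built from the reduced subword expression $w_i = r_1 \cdots \widehat{r_i} \cdots r_d$ and $p_i^{\epsilon} = {\Phi}_{u_i^\epsilon}^{-1} \circ \Psi_u|_{I_i^\epsilon}$. As noted there, both factors are legitimately understood as maps between spheres of dimension $d-1$, each obtained by collapsing the appropriate cube boundary to a point, so the degree is multiplicative along this composition.

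Next I would apply multiplicativity to obtain
\[
\rho(u,v) = {\rm deg}\, f_i^{\epsilon} = {\rm deg}\!\left({\Psi}_{u_i^\epsilon}^{-1} \circ {\Phi}_{u_i^\epsilon}\right) \cdot {\rm deg}\, p_i^{\epsilon}.
\]
It then remains only to substitute the values of ${\rm deg}\, p_i^{\epsilon}$ furnished by Proposition \ref{degrees}. Taking $\epsilon = 0$ gives ${\rm deg}\, p_i^0 = (-1)^i$, which yields the first displayed formula of the theorem, while $\epsilon = 1$ gives ${\rm deg}\, p_i^1 = (-1)^{i+1+\sigma}$, with $\sigma$ as in Equation \ref{eq:sigma}, which yields the second.

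Since every ingredient is already in place, there is no genuine obstacle; the only point requiring care is the bookkeeping. I would verify that $\rho(u,v)$ is correctly identified with ${\rm deg}\, f_i^{\epsilon}$, so that the correction factor ${\Psi}_{u_i^\epsilon}^{-1} \circ {\Phi}_{u_i^\epsilon}$, which measures the possible discrepancy between the reduced expression fixed when constructing the skeleton and the subword expression $r_1 \cdots \widehat{r_i} \cdots r_d$ used to define $\Phi_{u_i^\epsilon}$, is inserted on the correct side of the composition, and that each map appearing genuinely is a map of $(d-1)$-spheres so that multiplicativity of the degree applies verbatim.
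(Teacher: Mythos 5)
Your proposal is correct and follows essentially the same route as the paper: the paper presents Theorem \ref{degrees1} as a direct summary of the factorization $f_i^{\epsilon} = \left( {\Psi}_{u_i^\epsilon}^{-1} \circ {\Phi}_{u_i^\epsilon} \right) \circ p_i^{\epsilon}$ set up at the start of Section 6, the identification of $\rho(u,v)$ with the local degree of $f_i^{\epsilon}$ from Step 3 of Section 5, and the values ${\rm deg}\, p_i^0 = (-1)^i$ and ${\rm deg}\, p_i^1 = (-1)^{i+1+\sigma}$ from Proposition \ref{degrees}, with multiplicativity of the degree doing the rest. Your added care about viewing each factor as a map of $(d-1)$-spheres with boundaries collapsed matches the paper's own remark that ${\Psi}_{u_i^\epsilon}^{-1} \circ {\Phi}_{u_i^\epsilon}$ is a homeomorphism of spheres of degree $\pm 1$.
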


\section{Examples}

\subsection{$G = \Sl (3)$ and $K = \SO (3)$}

Since right multiplication by $c \in C$ is a diffeomorphism then $\B (u c) = \B (u) c $ for $u \in U$. So $\delta (\B (u c)) = \delta (\B (u)) c $. So to obtain $\delta (\B (u))$ for all $u \in U$ we need only obtain $\delta (\B (u))$ for the $5$ elements $s_1$, $s_2$, $s_1 s_2 $, $s_2 s_1 $, $s_1 s_2 s_1$. Let $E_{i,j}$ be the matrix with $\pi$ in the position $(i,j)$ and zero elsewhere. Take $A = E_{1,2} - E_{2,1}$ and $B = E_{2,3}-E_{3,2}$ note that $\psi_1(t)=e^{tA}$ and $\psi_2(s)=e^{sB}$ then
\begin{flalign*}
&\Psi_1(0)= b & \\
&\Psi_{s_1}(t) = e^{tA} b, \, t \in [0,1] &\\
&\Psi_{s_2}(s) = e^{sB} b, \, s \in [0,1] &\\
&\Psi_{s_1 s_2}(t,s) = e^{tA} e^{sB} b, \, (t,s) \in [0,1]^2 &\\
&\Psi_{s_2 s_1} (t,s)= e^{tB} e^{sA} b, \, (t,s) \in [0,1]^2 &\\
&\Psi_{s_1 s_2 s_1} (t,s,z)= e^{tA} e^{sB} e^{zA} b, \, (t,s,z) \in [0,1]^3 &\\
\end{flalign*}

The multiples by $c$ to the right are similar. Then we obtain expressions for $\rho$. The following calculations can be done more geometrically by comparing orientations of the maps with the standard orientation or more algebraically by calculating $\sigma(u,v)$ as in Proposition \ref{degrees1}.  The Bruhat diagram for the Weyl group can be represented as:

\begin{center}
\begin{tikzpicture}[node distance={21mm}, thick, main/.style = {draw, rectangle} ]
\node[main] (1) {$r_1 r_2 r_1$};
\node[main] (2) [below left of=1] {$r_1 r_2$};
\node[main] (3) [below right of=1] {$r_2 r_1$};
\node[main] (4) [below of=2] {$r_1$};
\node[main] (5) [below of=3] {$r_2$};
\node[main] (6) [below right of=4] {$1$};
\draw[->] (1) -- (2);
\draw[->] (1) -- (3);
\draw[->] (2) -- (4);
\draw[->] (2) -- (5);
\draw[->] (3) -- (5);
\draw[->] (3) -- (4);
\draw[->] (4) -- (6);
\draw[->] (5) -- (6);
\end{tikzpicture}
\end{center}

Note that in this case there is no need to account for the factor ${\Psi}_{u_i^\epsilon}^{-1} \circ {\Phi}_{u_i^\epsilon}$ since there is only one expression needed for each of the five elements. More complex cases need more care, see \cite{lonardo2}.

First calculating geometrically:

\begin{enumerate}
\item $\rho(s_1,c_1)=1$ and $\rho(s_1,1)=-1$ since $f_1^1 (1)=e^{A} = c_1$ and $f_1^0 (0)=1$. Remember the convention we adopted for orientation of the cube $[-1,1]^d$: at the final point in the direction of the axis it is $+1$ and in the initial point opposite the axis it is $-1$.

\item $\rho(s_2,c_2)=1$ and $\rho(s_2,1)=-1$ since $f_1^1 (1)=e^{B} = c_2$ and $f_1^0 (0)=1$.

\item $\rho(s_1 s_2,s_1 c_2) = -1$, $\rho(s_1 s_2,s_1) = 1$, $\rho(s_1 s_2,c_1 s_2) = -1$,$\rho(s_1 s_2,s_2) = -1$. We need to consider the degree of 4 maps: $f_2^1(t,1) = e^{tA} e^{B} = e^{tA} c_2$, $f_2^0 (t,0)= e^{tA}1 = e^{tA}$, $f_1^1 (1,s)= e^{A} e^{sB} = c_1 e^{sB} = e^{(1-s)B} c_2 c_1$ since $c_1 e^{sB} c_1 = e^{-sB}$, $f_1^0 (0,s)= 1 e^{sB} = e^{sB}$. These maps are illustrated with orientations in the next picture. Note that following our convention the positive orientation in this case is the counter-clockwise orientation.

\begin{center}
\begin{tikzpicture}
	[square/.style={very thick,black}, axis/.style={->,blue,thick}]

\usetikzlibrary {arrows.meta}


\fill (2,2) circle (2.2pt) node[below=2pt] {$s_1 s_2$};

\fill (2,0) circle (2pt) node[below=2pt] {$s_1$};

\fill (4,2) circle (2pt) node[right=2pt] {$c_1 s_2 = s_2 c_1 c_2$};

\fill (2,4) circle (2pt) node[above=2pt] {$s_1 c_2$};

\fill (0,2) circle (2pt) node[left=2pt] {$s_2 $};

\draw [-{Classical TikZ Rightarrow[length=2.5mm]}] (0,0) node[anchor=north]{$1 \quad$} -- (1.25,0);
\draw [-{Classical TikZ Rightarrow[length=2.5mm]}] (0,0) -- (0,1) ;
\draw [-{Classical TikZ Rightarrow[length=2.5mm]}] (4,4) -- (4,3) ;
\draw [-{Classical TikZ Rightarrow[length=2.5mm]}] (0,4) -- (1,4) ;

\draw [-{Classical TikZ Rightarrow[length=2.5mm]}] (2,0) -- (3,0);
\draw [-{Classical TikZ Rightarrow[length=2.5mm]}] (0,2) -- (0,3) ;
\draw [-{Classical TikZ Rightarrow[length=2.5mm]}] (4,2) -- (4,1) ;
\draw [-{Classical TikZ Rightarrow[length=2.5mm]}] (2,4) -- (3,4) ;

\draw[axis] (0,0) -- (5,0) node[anchor=north]{$ \ \ x$};
\draw[axis] (0,0) -- (0,5) node[anchor=east]{$y \,$};

\draw[square] (0,0) -- (0,4) node[anchor=east]{$ c_2 $}-- (4,4) node[anchor=west]{$ c_1 c_2$} -- (4,0) node[anchor=north]{$ c_1 $} -- cycle;
\end{tikzpicture}
\end{center}

\item $\rho(s_2 s_1,s_2 c_1) = -1$, $\rho(s_2 s_1,s_2) = 1$, $\rho(s_2 s_1,c_2 s_1) = -1$,$\rho(s_2 s_1,s_1) = -1$. We need to consider the degree of 4 maps: $f_2^1(t,1) = e^{tB} e^{A} = e^{tB} c_1$, $f_2^0 (t,0)= e^{tB}1 = e^{tB}$, $f_1^1 (1,s)= e^{B} e^{sA} = c_2 e^{sA} = e^{(1-s)A} c_2 c_1$ since $c_2 e^{sA} c_2 = e^{-sA}$, $f_1^0 (0,s)= 1 e^{sB} = e^{sB}$. These maps are illustrated with orientations in the next picture.

\begin{center}
\begin{tikzpicture}
	[square/.style={very thick,black}, axis/.style={->,blue,thick}]

\usetikzlibrary {arrows.meta}


\fill (2,2) circle (2.2pt) node[below=2pt] {$s_2 s_1$};

\fill (2,0) circle (2pt) node[below=2pt] {$s_2$};

\fill (4,2) circle (2pt) node[right=2pt] {$c_2 s_1 = s_1 c_1 c_2$};

\fill (2,4) circle (2pt) node[above=2pt] {$s_2 c_1$};

\fill (0,2) circle (2pt) node[left=2pt] {$s_1 $};

\draw [-{Classical TikZ Rightarrow[length=2.5mm]}] (0,0) node[anchor=north]{$1 \quad$} -- (1,0);
\draw [-{Classical TikZ Rightarrow[length=2.5mm]}] (0,0) -- (0,1) ;
\draw [-{Classical TikZ Rightarrow[length=2.5mm]}] (4,4) -- (4,3) ;
\draw [-{Classical TikZ Rightarrow[length=2.5mm]}] (0,4) -- (1,4) ;

\draw [-{Classical TikZ Rightarrow[length=2.5mm]}] (2,0) -- (3,0);
\draw [-{Classical TikZ Rightarrow[length=2.5mm]}] (0,2) -- (0,3) ;
\draw [-{Classical TikZ Rightarrow[length=2.5mm]}] (4,2) -- (4,1) ;
\draw [-{Classical TikZ Rightarrow[length=2.5mm]}] (2,4) -- (3,4) ;

\draw[axis] (0,0) -- (5,0) node[anchor=north]{$ \ \ x$};
\draw[axis] (0,0) -- (0,5) node[anchor=east]{$y \,$};

\draw[square] (0,0) -- (0,4) node[anchor=east]{$ c_1 $}-- (4,4) node[anchor=west]{$ c_1 c_2$} -- (4,0) node[anchor=north]{$ c_2 $} -- cycle;

\end{tikzpicture}
\end{center}

\item $\rho(s_1 s_2 s_1,s_1 s_2 c_1)=1$, $\rho(s_1 s_2 s_1,s_1 s_2)=-1$, $\rho(s_1 s_2 s_1,c_1 s_2 s_1)=1$, ${\rho(s_1 s_2 s_1,s_2 s_1)=-1}$. We need to consider 4 maps:
$f_2^1(t,s,1) = e^{tA} e^{sB} c_1$, $f_2^0 (t,s,0)= e^{tA}e^{sB}$, $f_1^1 (1,s,z)= c_1 e^{sB} e^{zA} = e^{(1-s)B} c_1 c_2 e^{zA} = e^{(1-s)B} e^{(1-z)A} c_2$, since $c_1 e^{sB} c_1 = e^{-sB}$ and $c_2 e^{zA} c_2 = e^{-zA}$ , $f_1^0 (0,s,z)= e^{sB} e^{zA}$. These 4 maps are illustrated in the next two pictures with orientations of the faces on the edges.
\end{enumerate}

\begin{center}
\begin{tikzpicture}
	[cube/.style={very thick,black}, axis/.style={->,blue,thick}]

\usetikzlibrary {arrows.meta}

\fill [fill=cyan!]  (0,0,0) -- (0,4,0) -- (4,4,0) -- (4,0,0) ;

\fill [fill=cyan!]  (0,0,4) -- (0,4,4) -- (4,4,4) -- (4,0,4) ;

\fill (2,2,0) circle (1.8pt) node[below=2pt] {$s_1 s_2$};

\fill (2,2,2) circle (2.2pt) node[below=2pt] {$s_1 s_2 s_1$};

\fill (2,2,4) circle (1.8pt) node[right=2pt] {$s_1 s_2 c_1$};

 \draw [-{Classical TikZ Rightarrow[length=2.5mm]}] (0,0,0) -- (2,0,0);
 \draw [-{Classical TikZ Rightarrow[length=2.5mm]}] (0,4,0) -- (0,2,0);
 \draw [-{Classical TikZ Rightarrow[length=2.5mm]}] (4,0,0) -- (4,2,0);
 \draw [-{Classical TikZ Rightarrow[length=2.5mm]}] (4,4,0) -- (2,4,0);

 \draw [-{Classical TikZ Rightarrow[length=2.5mm]}] (0,0,4) -- (2,0,4);
 \draw [-{Classical TikZ Rightarrow[length=2.5mm]}] (0,4,4) -- (0,2,4);
 \draw [-{Classical TikZ Rightarrow[length=2.5mm]}] (4,0,4) -- (4,2,4);
 \draw [-{Classical TikZ Rightarrow[length=2.5mm]}] (4,4,4) -- (2,4,4);

\draw[axis] (0,0,0) -- (5,0,0) node[anchor=west]{$x$};
\draw[axis] (0,0,0) -- (0,5,0) node[anchor=west]{$\, y$};
\draw[axis] (0,0,0) -- (0,0,6) node[anchor=east]{$z$};


\draw[cube] (0,0,0) -- (0,4,0) node[anchor=east]{$ c_2 \ $} -- (4,4,0) node[anchor=west]{$ c_1 c_2$} -- (4,0,0) -- cycle;
\draw[cube] (0,0,4) node[anchor=east]{$ c_1 $ }-- (0,4,4) node[anchor=east]{$ c_1 c_2 $} -- (4,4,4) node[anchor=west]{$ \ c_2 $ }-- (4,0,4) node[anchor=north]{$1$} -- cycle;

\draw[cube] (0,0,0) -- (0,0,4);
\draw[cube] (0,4,0) -- (0,4,4);
\draw[cube] (4,0,0) -- (4,0,4);
\draw[cube] (4,4,0) -- (4,4,4);
\end{tikzpicture}
\end{center}

\begin{center}
\begin{tikzpicture}
	[cube/.style={very thick,black}, axis/.style={->,blue,thick}]

\usetikzlibrary {arrows.meta}

\fill [fill=orange!95!black]  (0,0,0) -- (0,4,0) -- (0,4,4) -- (0,0,4) ;

\fill [fill=orange!95!black]  (4,0,0) -- (4,4,0) -- (4,4,4) -- (4,0,4) ;

\fill (0,2,2) circle (1.8pt) node[below=2pt] {$s_2 s_1$};

\fill (2,2,2) circle (2.2pt) node[below=2pt] {$s_1 s_2 s_1$};

\fill (4,2,2) circle (1.8pt) node[below=2pt] {$c_1 s_2 s_1$};

 \draw [-{Classical TikZ Rightarrow[length=2.5mm]}] (0,0,0) -- (0,2,0);
 \draw [-{Classical TikZ Rightarrow[length=2.5mm]}] (0,4,0) -- (0,4,2);
 \draw [-{Classical TikZ Rightarrow[length=2.5mm]}] (0,4,4) -- (0,2,4);
 \draw [-{Classical TikZ Rightarrow[length=2.5mm]}] (0,0,4) -- (0,0,2);
 \draw [-{Classical TikZ Rightarrow[length=2.5mm]}] (4,0,0) -- (4,2,0);
 \draw [-{Classical TikZ Rightarrow[length=2.5mm]}] (4,4,0) -- (4,4,2);
 \draw [-{Classical TikZ Rightarrow[length=2.5mm]}] (4,4,4) -- (4,2,4);
 \draw [-{Classical TikZ Rightarrow[length=2.5mm]}] (4,0,4) -- (4,0,2);

\draw[axis] (0,0,0) -- (5,0,0) node[anchor=west]{$x$};
\draw[axis] (0,0,0) -- (0,5,0) node[anchor=west]{$\, y$};
\draw[axis] (0,0,0) -- (0,0,6) node[anchor=east]{$z$};


\draw[cube] (0,0,0) -- (0,4,0) node[anchor=east]{$ c_2 \ $} -- (4,4,0) node[anchor=west]{$ c_1 c_2$} -- (4,0,0) -- cycle;
\draw[cube] (0,0,4) -- (0,4,4) node[anchor=east]{$ c_1 c_2 $} -- (4,4,4) -- (4,0,4) node[anchor=north]{$1$} -- cycle;

\draw[cube] (0,0,0) -- (0,0,4);
\draw[cube] (0,4,0) -- (0,4,4);
\draw[cube] (4,0,0) -- (4,0,4);
\draw[cube] (4,4,0) -- (4,4,4);
\end{tikzpicture}
\end{center}

Now we obtain the same results using the more general algebraic method:

Remember we need to find $\Pi_{w_i'} = \Pi^+ \cap w_i' \Pi^-$ for each case to find the respective $\sigma(u,v)$. For this we use equation \ref{vara} and that $\alpha_1 = (1,-1,0)$ and $\alpha_2 = (0,1,-1)$. Here we will use $*$ to represent $c_j$ or $1$.
\begin{enumerate}
\item To calculate $\rho(s_1,*)$ we have $i=1$ and $w_1'=1$ so $\Pi_{1} = \varnothing$. Then $\rho(s_1,1)=(-1)^1$ and $\rho(s_1,c_1) = (-1)^{1+1}$

\item For $\rho(s_2,*)$ we have $i=1$ and $w_1'=1$ so $\Pi_{1} = \varnothing$. Then $\rho(s_2,1)=(-1)^1$ and $\rho(s_2,c_1) = (-1)^{1+1}$

\item For $\rho(s_1s_2,s_1 *)$ we have $i=2$ and $w_2'=1$ so $\Pi_{1} = \varnothing$. Then $\rho(s_1 s_2,s_1)=(-1)^2$ and $\rho(s_1 s_2,s_1 c_2) = (-1)^{2+1}$. Now for $\rho(s_1s_2,* s_2)$ we have $i=1$ and $w_1'=r_2$ so $\Pi_{r_2} =\{ \alpha_2\}$ by Equation \ref{vara} and $\sigma=(2.(-1))/2 = -1$. Then
$\rho(s_1 s_2, s_2) = (-1)^1$ and $\rho(s_1 s_2,c_1 s_2) = (-1)^1$.

\item For $\rho(s_2s_1,s_2 *)$ we have $i=2$ and $w_2'=1$ so $\Pi_{1} = \varnothing$. Then $\rho(s_2 s_1,s_2)=(-1)^2$ and $\rho(s_2 s_1,s_2 c_1) = (-1)^{2+1}$. Now for $\rho(s_2s_1,* s_1)$ we have $i=1$ and $w_1'=r_1$ so $\Pi_{r_1} =\{ \alpha_1\}$ by Equation \ref{vara} and $\sigma=(2.(-1))/2 = -1$. Then
$\rho(s_2 s_1, s_2) = (-1)^1$ and $\rho(s_2 s_1,c_1 s_2) = (-1)^1$.

\item For $\rho(s_1s_2s_1,s_1s_2 *)$ we have $i=3$ and $w_3'=1$ so $\Pi_{1} = \varnothing$. Then $\rho(s_1s_2 s_1,s_1s_2)=(-1)^3$ and $\rho(s_1s_2 s_1,s_1s_2 c_1) = (-1)^{3+1}$. Now for $\rho(s_1 s_2 s_1,* s_2 s_1)$ we have $i=1$ and $w_1'=r_2 r_1$ so $\Pi_{r_2 r_1} =\{ \alpha_2,r_2 \alpha_1 \} = \{ \alpha_2 ,\alpha_1 +\alpha_2 \}$ by Equation \ref{vara} and
\[
\sigma = \frac{2\langle \alpha_1 ,\alpha_2 \rangle }{\langle \alpha_1, \alpha_1 \rangle} + \frac{2 \langle \alpha_1, \alpha_1 +\alpha_ 2\rangle }{\langle \alpha_1, \alpha_1 \rangle} = \frac{2.(-1)}{2} + \frac{2.(1)}{2} = 0
\]
so $\rho(s_1 s_2 s_1,s_2 s_1) = (-1)^1$ and $\rho(s_1 s_2 s_1,c_1 s_2 s_1) = (-1)^{1+1} = 1$
\end{enumerate}
Writing $\B (c)$ as $c$, for any $c \in C$ we get,
\begin{flalign*}
&\delta_1 \B (s_1) = c_1 - 1 &\\
&\delta_1 \B (s_2) = c_2 - 1 &\\
\\
&\delta_2 \B (s_1 s_2) = \B (s_1) (1 - c_2) - \B (s_2) (1 + c_1 c_2) &\\
&\delta_2 \B (s_2 s_1) = \B (s_2) (1 - c_1) - \B (s_1) (1 + c_1 c_2) &\\
\\
&\delta_3 \B (s_1 s_2 s_1) = \B (s_1 s_2) (c_1 - 1) + \B (s_2 s_1) (c_2 - 1) &
\end{flalign*}

Calculating the kernel and image of boundary maps $\delta_k$ we can then calculate the homology of the compact group $K$.
\[
H_k = \frac {\ker \delta_k}{\Ima \delta_{k+1}}
\]
The calculation of homology groups of $K=\SO(3)$ using the kernels and images of $\delta_k$ is done in the Appendix. We obtained:
\[
H_0(K)=\Z; \quad H_1(K)=\Z/2\Z = \Z_2; \quad H_2(K) =0 ; \quad H_3(K) = \Z
\]
These results agree with $\SO(3)$ being homeomorphic to the projective three dimensional space.

\subsection{$G=G_2$ and $K=\SO(4)$}

Consider the group $G_2$ as the split real form of Dynkin diagram $G_2$, sometimes denoted $G_{2,2}$. It can be seen by the Iwasawa decomposition that the dimension of the maximal compact subgroup is $6$, Yokota showed in 1977 that the maximal compact subgroup is in fact $\SO(4)$ (see \cite{yokota}). Let the two simple roots of the split real algebra $\g_2$ be $\{ \alpha, \beta \}$ then
\[
\Pi^+=\{\alpha,\beta,\alpha+\beta,2 \alpha +\beta,3\alpha + \beta, 3 \alpha +2 \beta \}
\]
are the positive roots, let $r_1$ correspond to $\alpha$ and $r_2$ correspond to $\beta$, then the corresponding Weyl group is
\[
W  = \{1, r_1,r_2,r_1 r_2,r_2 r_1,r_1r_2r_1,r_2r_1r_2,(r_1r_2)^2,(r_2 r_1)^2,r_1(r_2r_1)^2,r_2 (r_1 r_2)^2,(r_1r_2)^3 \}
\]
Note that $(r_1r_2)^3  =(r_2r_1)^3$ since $(r_1r_2)^6=1$ and the Bruhat order for the Weyl group of $G_2$ is

\begin{center}
\begin{tikzpicture}[node distance={21mm}, thick, main/.style = {draw, rectangle} ]
\node[main] (1) {$(r_1 r_2)^3$};
\node[main] (2) [below left of=1] {$(r_1 r_2)^2 r_1$};
\node[main] (3) [below right of=1] {$(r_2 r_1)^2 r_2$};
\node[main] (4) [below of=2] {$(r_1 r_2)^2$};
\node[main] (5) [below of=3] {$(r_2 r_1)^2$};
\node[main] (6) [below of=4] {$r_1 r_2 r_1$};
\node[main] (7) [below of=5] {$r_2 r_1 r_2$};
\node[main] (8) [below of=6] {$r_1 r_2$};
\node[main] (9) [below of=7] {$r_2 r_1 $};
\node[main] (10) [below of=8] {$r_1 $};
\node[main] (11) [below of=9] {$r_2 $};
\node[main] (12) [below right of=10] {$1$};

\draw[->] (1) -- (2);
\draw[->] (1) -- (3);
\draw[->] (2) -- (4);
\draw[->] (2) -- (5);
\draw[->] (3) -- (4);
\draw[->] (3) -- (5);
\draw[->] (4) -- (6);
\draw[->] (4) -- (7);
\draw[->] (5) -- (6);
\draw[->] (5) -- (7);
\draw[->] (6) -- (8);
\draw[->] (6) -- (9);
\draw[->] (7) -- (8);
\draw[->] (7) -- (9);
\draw[->] (8) -- (10);
\draw[->] (8) -- (11);
\draw[->] (9) -- (10);
\draw[->] (9) -- (11);
\draw[->] (10) -- (12);
\draw[->] (11) -- (12);
\end{tikzpicture}
\end{center}
Note that,
\[
s_1 c_2 s_1^{-1} =\exp\left(\frac{1}{2} F_1\right) \exp (i  H_2^{\vee}) \exp \left(\frac{1}{2} F_1\right)^{-1} = \exp \left(\Ad (\exp \left(\frac{1}{2} F_1\right)) i H_2^{\vee}\right)
\]
equals
\[
\exp \left(e^{\ad (\frac{1}{2} F_1)} i H_2^{\vee}\right) = \exp (r_1 (i H_2^{\vee}))
\]
This comes from the equation before Lemma 1.7 in \cite{lonardo} and
\[
\exp (r_1 (i H_2^{\vee})) = \exp (i r_1 (H_2^{\vee}))= \exp (i r_{\alpha} ({H_\beta}^{\vee}))
\]
now
\[
r_{\alpha} ({H_\beta}^{\vee}) = H_\beta^{\vee} - 2\frac{\langle H_\alpha^{\vee},H_\beta^{\vee} \rangle}{\langle H_\alpha^{\vee},H_\alpha^{\vee} \rangle } H_\alpha^{\vee} = H_\beta^{\vee} - 2\frac{\langle \alpha, \beta\rangle}{\langle \alpha,\alpha \rangle } H_\alpha^{\vee}= H_\beta^{\vee} - 2\left(\frac{-3}{2}\right) H_\alpha^{\vee}
\]
so
\[
r_{\alpha} ({H_\beta}^{\vee})= H_\beta^{\vee} +3 H_\alpha^{\vee} = H_2^{\vee} +3 H_1^{\vee}
\]
and
\[
\exp (r_1 (\pi i H_2^{\vee})) = \exp (\pi i (H_2^{\vee} +3 H_1^{\vee}))=c_2c_1^3=c_1c_2
\]
so
\[
s_1 c_2 s_1^{-1}= c_1 c_2
\]
and, through similar calculations we also obtain,
\[
s_2 c_1 s_2^{-1}= c_1 c_2
\]

Note that $s_1 c_2 s_1^{-1}= c_1 c_2$ then by conjugating by $s_1^{-1}$ on both sides we get $c_2 = s_1^{-1} c_1 c_2 s_1$ so that $c_2 = c_1 s_1^{-1} c_2 s_1$ and
\[
s_1 c_1 c_2  = c_2 s_1
\]
from $s_2 c_1 s_2^{-1}= c_1 c_2$ we similarly get
\[
s_2 c_1 c_2  = c_1 s_2
\]

Remember we need to find $\Pi_{v_2} = \Pi^+ \cap v_2 \Pi^-$ for each case to find the respective $\sigma(u,v)$. For this we use equation \ref{vara} and that $\Pi_{v_2} = \Pi_{\pi(v_2)}$ take $\alpha_1=\alpha = (\frac{1}{2},-\frac{\sqrt{3}}{2})$ and $\alpha_2=\beta = (0,\sqrt{3})$. Here we will use $*$ to represent $c_j$ or $1$. We will also use Proposition \ref{degrees} to calculate $\rho(u,v)$.

\begin{enumerate}
\item To calculate $\rho(s_1,*)$ we have $i=1$ and $w_1'=1$ so $\Pi_{1} = \varnothing$. Then $\rho(s_1,1)=(-1)^1$ and $\rho(s_1,c_1) = (-1)^{1+1}$.

\item For $\rho(s_2,*)$ we have $i=1$ and $w_1'=1$ so $\Pi_{1} = \varnothing$. Then $\rho(s_2,1)=(-1)^1$ and $\rho(s_2,c_2) = (-1)^{1+1}$.

\item For $\rho(s_1s_2,s_1 *)$ we have $i=2$ and $w_2'=1$ so $\Pi_{1} = \varnothing$. Then $\rho(s_1 s_2,s_1)=(-1)^2$ and $\rho(s_1 s_2,s_1 c_2) = (-1)^{2+1}$. Now for $\rho(s_1s_2,* s_2)$ we have $i=1$ and $w_1'=r_2$ so $\Pi_{r_2} =\{ \alpha_2\}$ by Equation \ref{vara} and $\sigma=2(-3/2)/1 = -3$. Then
$\rho(s_1 s_2, s_2) = (-1)^1$ and $\rho(s_1 s_2,c_1 s_2) = (-1)^{1+1-3}$. Note that $c_1s_2 = s_2 c_1 c_2$.

\item For $\rho(s_2s_1,s_2 *)$ we have $i=2$ and $w_2'=1$ so $\Pi_{1} = \varnothing$. Then $\rho(s_2 s_1,s_2)=(-1)^2$ and $\rho(s_2 s_1,s_2 c_1) = (-1)^{2+1}$. Now for $\rho(s_2s_1,* s_1)$ we have $i=1$ and $w_1'=r_1$ so $\Pi_{r_1} =\{ \alpha_1\}$ by Equation \ref{vara} and $\sigma=2(-3/2)/3 = -1$. Then
$\rho(s_2 s_1, s_1) = (-1)^1$ and $\rho(s_2 s_1,c_2 s_1) = (-1)^1$. Note that $c_2s_1 = s_1 c_1 c_2$.

\item For $\rho(s_1s_2s_1,s_1s_2 *)$ we have $i=3$ and $w_3'=1$ so $\Pi_{1} = \varnothing$. Then $\rho(s_1s_2 s_1,s_1s_2)=(-1)^3$ and $\rho(s_1s_2 s_1,s_1s_2 c_1) = (-1)^{3+1}$. Now for $\rho(s_1 s_2 s_1,* s_2 s_1)$ we have $i=1$ and $w_1'=r_2 r_1$ so $\Pi_{r_2r_1} =\{ \alpha_2,r_2 \alpha_1 \} = \{ \alpha_2 ,\alpha_1 +\alpha_2 \}$ by Equation \ref{vara} and
\[
\sigma = \frac{2\langle \alpha_1 ,\alpha_2 \rangle }{\langle \alpha_1, \alpha_1 \rangle} + \frac{2 \langle \alpha_1, \alpha_1 +\alpha_ 2\rangle }{\langle \alpha_1, \alpha_1 \rangle} = \frac{2.(-3/2)}{1} + \frac{2.(-1/2)}{1} = -4
\]
so $\rho(s_1 s_2 s_1,s_2 s_1) = (-1)^1$ and $\rho(s_1 s_2 s_1,c_1 s_2 s_1) = (-1)^{1+1-4} = 1$. Note that $c_1 s_2 s_1 = s_2 c_1 c_2 s_1 = s_2 c_1 s_1 c_1 c_2= s_2 s_1 c_2$.

\item For $\rho(s_2s_1s_2,s_2s_1 *)$ we have $i=3$ and $w_3'=1$ so $\Pi_{1} = \varnothing$. Then $\rho(s_2s_1 s_2,s_2s_1)=(-1)^3$ and $\rho(s_2s_1 s_2,s_2s_1 c_2) = (-1)^{3+1}$. Now for $\rho(s_2 s_1 s_2,* s_1 s_2)$ we have $i=1$ and $w_1'=r_1 r_2$ so $\Pi_{r_1r_2} =\{ \alpha_1,r_1 \alpha_2 \} = \{ \alpha_1 ,3\alpha_1 +\alpha_2 \}$ by Equation \ref{vara} and
\[
\sigma = \frac{2\langle \alpha_2 ,\alpha_1 \rangle }{\langle \alpha_2, \alpha_2 \rangle} + \frac{2 \langle \alpha_2, 3\alpha_1 +\alpha_ 2\rangle }{\langle \alpha_2, \alpha_2 \rangle} = \frac{2.(-3/2)}{3} + \frac{2.(-9/2 + 3)}{3} = -2
\]
so $\rho(s_2 s_1 s_2,s_1 s_2) = (-1)^1$ and $\rho(s_2 s_1 s_2,c_2 s_1 s_2) = (-1)^{1+1-2} = 1$. Note that $c_2 s_1 s_2 = s_1 c_1 c_2 s_2 = s_1 c_1 s_2 c_2 = s_1 s_2 c_1 c_2 c_2 = s_1 s_2 c_1$.

\item For $\rho((s_1s_2)^2,s_1s_2s_1 *)$ we have $i=4$ and $w_4'=1$ so $\Pi_{1} = \varnothing$. Then $\rho((s_1s_2)^2,s_1s_2s_1)=(-1)^4$ and $\rho((s_1s_2)^2,s_1s_2 s_1 c_2) = (-1)^{4+1}$. Now for $\rho((s_1s_2)^2,* s_2 s_1s_2)$ we have $i=1$ and $w_1'=r_2 r_1 r_2$ so
\[
\Pi_{r_2 r_1 r_2} =\{ \alpha_2,r_2 \alpha_1 , r_2 r_1 \alpha_2 \} = \{ \alpha_2 ,\alpha_1 +\alpha_2 , r_2(\alpha_2+3\alpha_1)\} =
\]
\[
= \{ \alpha_2 ,\alpha_1 +\alpha_2 , 3\alpha_1+2\alpha_2\}
\]
by Equation \ref{vara} and
\[
\sigma = \frac{2\langle \alpha_1 ,\alpha_2 \rangle }{\langle \alpha_1, \alpha_1 \rangle} + \frac{2 \langle \alpha_1, \alpha_1 +\alpha_ 2\rangle }{\langle \alpha_1, \alpha_1 \rangle} + \frac{2 \langle \alpha_1, 3\alpha_1 +2\alpha_ 2\rangle }{\langle \alpha_1, \alpha_1 \rangle}
\]
\[
\sigma=2.(-3/2) + 2.(-1/2)+ 2.(3+2.(-3/2))= -4
\]
so $\rho((s_1s_2)^2,s_2 s_1s_2) = (-1)^1$ and $\rho((s_1s_2)^2,c_1 s_2 s_1s_2) = (-1)^{1+1-4} = 1$. Note that $c_1 s_2 s_1 s_2= s_2 c_1 c_2 s_1 s_2 = s_2 c_1 s_1 c_1 c_2 s_2 = s_2 s_1 c_2 s_2= s_2 s_1 s_2 c_2$.

\item For $\rho((s_2s_1)^2,s_2s_1s_2 *)$ we have $i=4$ and $w_4'=1$ so $\Pi_{1} = \varnothing$. Then $\rho((s_2s_1)^2,s_2s_1s_2)=(-1)^4$ and $\rho((s_2s_1)^2,s_2s_1 s_2 c_1) = (-1)^{4+1}$. Now for $\rho((s_2 s_1)^2 ,* s_1 s_2s_1)$ we have $i=1$ and $w_1'=r_1 r_2 r_1$ so
\[
\Pi_{r_1 r_2 r_1} =\{ \alpha_1,r_1 \alpha_2 , r_1 r_2 \alpha_1 \} = \{ \alpha_1 ,3\alpha_1 +\alpha_2 , r_1(\alpha_1 +\alpha_2)\} =
\]
\[
= \{ \alpha_1 ,3\alpha_1 +\alpha_2 , 2\alpha_1+\alpha_2\}
\]
by Equation \ref{vara} and
\[
\sigma = \frac{2\langle \alpha_2 ,\alpha_1 \rangle }{\langle \alpha_2, \alpha_2 \rangle} + \frac{2 \langle \alpha_2, 3\alpha_1 +\alpha_ 2\rangle }{\langle \alpha_2, \alpha_2 \rangle} + \frac{2 \langle \alpha_2, 2\alpha_1 +\alpha_ 2\rangle }{\langle \alpha_2, \alpha_2 \rangle} =
\]
\[
\quad=\frac{2.(-3/2)}{3} + \frac{2.(-9/2+3)}{3}+ \frac{2.(-6/2 +3))}{3}= -4
\]
so $\rho((s_2 s_1)^2,s_1 s_2 s_1) = (-1)^1$ and $\rho((s_2 s_1)^2,c_2 s_1 s_2 s_1) = (-1)^{1+1-4} = 1$. Note that $c_2 s_1 s_2 s_1 = s_1 c_1 c_2 s_2 s_1 = s_1 c_1 s_2 c_2 s_1 = s_1 s_2 c_1 c_2 c_2 s_1 = s_1 s_2 c_1 s_1 = s_1 s_2 s_1 c_1$.

\item For $\rho((s_1s_2)^2 s_1,(s_1s_2)^2 )$ we have $i=5$ and $w_5'=1$ so $\Pi_{1} = \varnothing$. Then $\rho((s_1s_2)^2 s_1,(s_1s_2)^2=(-1)^5$ and $\rho((s_1s_2)^2s_1,(s_1s_2)^2 c_1) = (-1)^{5+1}$. Now for $\rho((s_1s_2)^2 s_1,* (s_2 s_1)^2)$ we have $i=1$ and $w_1'=(r_2 r_1)^2$ so
\[
\Pi_{(r_2 r_1)^2} =\{ \alpha_2,r_2 \alpha_1 , r_2 r_1 \alpha_2 ,r_2r_1r_2 \alpha_1\} =
\]
\[
\quad = \{ \alpha_2 ,\alpha_1 +\alpha_2 , 3\alpha_1+2\alpha_2, 2\alpha_1+\alpha_2 \}
\]
by Equation \ref{vara} and
\[
\sigma = \frac{2\langle \alpha_1 ,\alpha_2 \rangle }{\langle \alpha_1, \alpha_1 \rangle} + \frac{2 \langle \alpha_1, \alpha_1 +\alpha_ 2\rangle }{\langle \alpha_1, \alpha_1 \rangle} + \frac{2 \langle \alpha_1, 3\alpha_1 +2\alpha_ 2\rangle }{\langle \alpha_1, \alpha_1 \rangle} + \frac{2 \langle \alpha_1, 2\alpha_1 +\alpha_ 2\rangle }{\langle \alpha_1, \alpha_1 \rangle} =
\]
\[
\quad =2.(-3/2) + 2.(-1/2)+ 2.(3+2.(-3/2))+2.(2+(-3/2)) = -3
\]
so $\rho((s_1s_2)^2 s_1,(s_2 s_1)^2 = (-1)^1$ and $\rho((s_1s_2)^2s_1,c_1 (s_2 s_1)^2 = (-1)^{1+1-3} = -1$. Note by calculations similar to item 5, $c_1 (s_2 s_1)(s_2 s_1)= (s_2 s_1) c_2 (s_2 s_1) = (s_2 s_1) (s_2 s_1) c_1 c_2$.

\item For $\rho((s_2s_1)^2 s_2,(s_2s_1)^2 )$ we have $i=5$ and $w_5'=1$ so $\Pi_{1} = \varnothing$. Then $\rho((s_2s_1)^2 s_2,(s_2s_1)^2=(-1)^5$ and $\rho((s_2s_1)^2 s_2,(s_2s_1)^2 c_2) = (-1)^{5+1}$. Now for $\rho((s_2s_1)^2 s_2,* (s_1 s_2)^2)$ we have $i=1$ and $w_1'=(r_1 r_2)^2$ so
\[
\Pi_{(r_1 r_2)^2} =\{ \alpha_1,r_1 \alpha_2 , r_1 r_2 \alpha_1 ,r_1r_2r_1 \alpha_2\} =
\]
\[
\quad = \{ \alpha_1 ,3\alpha_1 +\alpha_2 , 2\alpha_1+\alpha_2, 3\alpha_1+2\alpha_2 \}
\]
by Equation \ref{vara} and
\[
\sigma = \frac{2\langle \alpha_2 ,\alpha_1 \rangle }{\langle \alpha_2, \alpha_2 \rangle} + \frac{2 \langle \alpha_2, 3\alpha_1 +\alpha_ 2\rangle }{\langle \alpha_2, \alpha_2 \rangle} + \frac{2 \langle \alpha_2, 2\alpha_1 +\alpha_ 2\rangle }{\langle \alpha_2, \alpha_2 \rangle} + \frac{2 \langle \alpha_2, 3\alpha_1 +2\alpha_ 2\rangle }{\langle \alpha_2, \alpha_2 \rangle} =
\]
\[
\quad =\frac{2.(-3/2)}{3} + \frac{2.(-9/2 +3)}{3}+ \frac{2.(-3 +3)}{3}+ \frac{2.(-9/2+6))}{3} = -1
\]
so $\rho((s_1s_2)^2 s_1,(s_2 s_1)^2 = (-1)^1$ and $\rho((s_1s_2)^2s_1,c_1 (s_2 s_1)^2 = (-1)^{1+1-1} = -1$. Note by calculations similar to item 6, $c_2 (s_1 s_2)(s_1 s_2) = (s_1 s_2) c_1 (s_1 s_2) = (s_1 s_2) (s_1 s_2) c_1 c_2$.

\item For $\rho((s_1s_2)^3,(s_1s_2)^2 s_1 *)$ we have $i=6$ and $w_6'=1$ so $\Pi_{1} = \varnothing$. Then $\rho((s_1s_2)^3,(s_1s_2)^2s_1)=(-1)^6$ and $\rho((s_1s_2)^3,(s_1s_2)^2 s_1 c_2) = (-1)^{6+1}$. Now for $\rho((s_1s_2)^3,* (s_2 s_1)^2 s_2)$ we have $i=1$ and $w_1'=(r_2 r_1)^2 r_2$ so
\[
\Pi_{(r_2 r_1)^2 r_2} =\{ \alpha_2,r_2 \alpha_1 , r_2 r_1 \alpha_2 , r_2 r_1 r_2 \alpha_1 , r_2 r_1 r_2 r_1 \alpha_2 \} =
\]
\[
\quad = \{ \alpha_2 ,\alpha_1 +\alpha_2 , 3\alpha_1+2\alpha_2,2\alpha_1+\alpha_2,3\alpha_1+\alpha_2 \}
\]
by Equation \ref{vara} and putting the term $\frac{2\langle \alpha_1, \cdot \rangle }{\langle \alpha_1, \alpha_1 \rangle}$ in evidence,
\[
\sigma = \frac{2\langle \alpha_1 ,( \alpha_2 + (\alpha_1 +\alpha_2) + (3\alpha_1+2\alpha_2) + (2\alpha_1+\alpha_2) + (3\alpha_1+\alpha_2)) \rangle }{\langle \alpha_1, \alpha_1 \rangle} =
\]
\[
\quad=2\langle \alpha_1,(9\alpha_1 + 6\alpha_2) \rangle = 2\left( 9 - 6.\left( \frac{-3}{2}\right) \right)=0
\]
so $\rho((s_1s_2)^3,(s_2 s_1)^2s_2) = (-1)^1$ and $\rho((s_1s_2)^2,c_1 (s_2 s_1)^2 s_2) = (-1)^{1+1+0} = 1$. Note by calculations similar to item 9, $ c_1 (s_2 s_1)^2 s_2 = (s_2 s_1)^2 c_1 c_2 s_2 = (s_2 s_1)^2 c_1 s_2 c_2 = (s_2 s_1)^2  s_2 c_1$.
\end{enumerate}

Writing $\B (c)$ as $c$, for any $c \in C$ we get,
\begin{flalign*}
&\delta_1 \B (s_1) = c_1 - 1 &\\
&\delta_1 \B (s_2) = c_2 - 1 &\\
\\
&\delta_2 \B (s_1 s_2) = \B (s_1) (1 - c_2) - \B (s_2) (1 + c_1 c_2) &\\
&\delta_2 \B (s_2 s_1) = \B (s_2) (1 - c_1) - \B (s_1) (1 + c_1 c_2) &\\
\\
&\delta_3 \B (s_1 s_2 s_1) = \B (s_1 s_2) (c_1 - 1) + \B (s_2 s_1) (c_2 - 1) & \\
&\delta_3 \B (s_2 s_1 s_2) = \B (s_2 s_1) (c_2 - 1) + \B (s_1 s_2) (c_1 - 1) &
\\
\\
&\delta_4 \B ((s_1 s_2)^2) = \B (s_1 s_2 s_1) (1- c_2 ) + \B (s_2 s_1s_2) (c_2 - 1) &\\
&\delta_4 \B ((s_2 s_1)^2) = \B (s_2 s_1 s_2) (1 -c_1 ) + \B (s_1 s_2 s_1) (c_1 - 1) &\\
\\
&\delta_5 \B ((s_1 s_2)^2 s_1) = \B ((s_1 s_2)^2) (c_1 - 1) - \B ((s_2 s_1)^2) (1+ c_1c_2) &\\
&\delta_5 \B ((s_2 s_1)^2 s_2) = \B ((s_2 s_1)^2) (c_2 - 1) - \B ((s_1 s_2)^2) (1+ c_1c_2) &\\
\\
&\delta_6 \B ((s_1 s_2)^3) = \B ((s_1 s_2)^2 s_1) (1 - c_2) + \B ((s_2 s_1)^2 s_2) (c_1 - 1) &
\end{flalign*}

The calculation of homology groups of $K=\SO(4)$ using the kernels and images of $\delta_k$ turn impractical to be done by hand, we then express the $\delta_k$ in matrix form so we can use the Sagemath homology package to calculate the homology groups more easily. We obtained:
\[
H_0(K)=\Z; \quad H_1(K)=\Z_2; \quad H_2(K) =0 ; \quad H_3(K) = \Z \oplus \Z;
\]
\[
H_4(K)= \Z_2; \quad H_5(K)=0; \quad H_6(K)= \Z
\]

These are indeed the homology groups of $\SO(4)$, one can alternatively find the homology groups of $\SO(4)$ by using that $\SO(4)$ is homeomorphic to $S^3 \times \SO(3)$ and applying, for example, Theorem 3B.6 in \cite{hatcher}. The calculation is relatively easy since the ``Tor" components are trivial.

\appendix

\section{Appendix}

Since the operators $\delta_k$ are equivariant with relation to right multiplication with members of $M$ then the kernels and images in the previous expression also are invariant by right multiplication by $M$. Note also that $\delta_k$ are all linear operators so in the matrix format is always possible to find the kernels and images by linear algebra calculations. In the following calculations we will write $\B(s_j s_k)$ and $\B(s_j)$ as $s_j s_k$ and $s_j$, respectively. Also, let $a_i \in \Z$.

\textbf{Calculating} $\ker \delta_1$:
\[
\delta_1(s_1 (a_1+a_2 c_1+a_3 c_2+a_4 c_1 c_2)+s_2(a_5+a_6 c_1+a_7 c_2+a_8 c_1 c_2))
\]
\[
=(c_1-1) (a_1+a_2 c_1+a_3 c_2+a_4 c_1 c_2)+(c_2-1)(a_5+a_6 c_1+a_7 c_2+a_8 c_1 c_2)
\]
\[
=(-a_1+a_2-a_5+a_7)+c_1(a_1-a_2-a_6+a_8)+c_2(-a_3+a_4+a_5-a_7)+c_1c_2(-a_4+a_3+a_6-a_8)
\]
So to get $\ker \delta_1$ we need
\[
\left\{\begin{array}{llll}
a_1+a_5=a_2+a_7 \\
a_1+a_8=a_2+a_6 \\
a_3+a_7=a_4+a_5 \\
a_3+a_6=a_4+a_8 \\
\end{array}\right.
\]
Subtracting the second line by the first and the fourth line by the third,
\[
\left\{\begin{array}{llll}
a_1+a_5=a_2+a_7 \\
a_5+a_6=a_7+a_8 \\
a_3+a_7=a_4+a_5 \\
a_5+a_6=a_7+a_8 \\
\end{array}\right.
\]
So
\[
\left\{\begin{array}{llll}
a_1=a_2-a_5+a_7 = a_2 +(a_6-a_7-a_8)+a_7 = a_2+a_6-a_8\\
a_5=-a_6+a_7+a_8 \\
a_3=a_4+a_5-a_7 = a_4 +(-a_6+a_7+a_8)-a_7 = a_4-a_6+a_8\\
\end{array}\right.
\]
where the $a_5$ was substituted in the first and third line, so that
\[
s_1 (a_1+a_2 c_1+a_3 c_2+a_4 c_1 c_2)+s_2(a_5+a_6 c_1+a_7 c_2+a_8 c_1 c_2)
\]
\[
= s_1 ((a_2+a_6-a_8)+a_2 c_1+(a_4-a_6+a_8) c_2+a_4 c_1 c_2)
\]
\[
+s_2((-a_6+a_7+a_8)+a_6 c_1+a_7 c_2+a_8 c_1 c_2)
\]
reorganizing the terms
\[
a_2 s_1(1+c_1) +a_4 s_1(c_2+c_1c_2) +a_6(s_1(1-c_2)+s_2(-1+c_1))
\]
\[
+a_7(s_2(1+c_2)) +a_8 (s_1(-1+c_2)+s_2(1+c_1c_2))
\]
In the previous expression the terms multiplying $a_2$, $a_4$, $a_6$, $a_7$, $a_8$ are then generators of $\ker \delta_1$. To simplify adding the term from $a_6$ to $a_8$ we get that:
\[
\ker \delta_1 = \langle s_1(1+c_1),s_1 c_2(1+c_1), s_2(1+c_2), s_2 c_1(1+c_2),s_1(1-c_2)-s_2(1-c_1) \rangle
\]

\textbf{Calculating} $\ker \delta_2$:
\[
\delta_2(s_1 s_2 (a_1+a_2 c_1+a_3 c_2+a_4 c_1 c_2)+s_2s_1(a_5+a_6 c_1+a_7 c_2+a_8 c_1 c_2))
\]
\[
=\left.\begin{array}{lll}
(s_1(1-c_2) - s_2(1+c_1c_2))(a_1+a_2 c_1+a_3 c_2+a_4 c_1 c_2) \\
+ (s_2(1-c_1)-s_1(1+c_1c_2))(a_5+a_6 c_1+a_7 c_2+a_8 c_1 c_2)
\end{array}\right.
\]
\[
=\left.\begin{array}{lll}
s_1((a_1-a_3-a_5-a_8)+ c_1(a_2-a_4-a_6-a_7)+c_2(a_3-a_1-a_6-a_7) \\
+c_1c_2(a_4-a_2-a_5-a_8))+s_2((a_5-a_1-a_4-a_6)+ c_1(a_6-a_2-a_3-a_5) \\
+c_2(a_7-a_2-a_3-a_8)+c_1c_2(a_8-a_1-a_4-a_7))
\end{array}\right.
\]
So to get $\ker \delta_2$ we need
\[
\left\{\begin{array}{llllllll}
a_1=a_3+a_5+a_8 \\
a_2=a_4+a_6+a_7 \\
a_3=a_1+a_6+a_7 = (a_3+a_5+a_8)+a_6+a_7 \\
a_4=a_2+a_5+a_8 = (a_4+a_6+a_7)+a_5+a_8 \\
a_5=a_1+a_4+a_6 = (a_3+a_5+a_8)+a_4+a_6 \\
a_6=a_2+a_3+a_5 = (a_4+a_6+a_7)+a_3+a_5 \\
a_7=a_2+a_3+a_8 = (a_4+a_6+a_7)+a_3+a_8 \\
a_8=a_1+a_4+a_7 = (a_3+a_5+a_8)+a_4+a_7 \\
\end{array}\right.
\]
where we substituted the the first two lines in the other equations. With some cancellations we note that the fourth, the seventh and eighth are redundant, then
\[
\left\{\begin{array}{lllll}
a_1=a_3+a_5+a_8 \\
a_2=a_4+a_6+a_7 \\
a_5+a_8+a_6+a_7 =0\\
a_3+a_8+a_4+a_6 =0\\
a_4+a_7+a_3+a_5 =0\\
\end{array}\right.
\]
Summing the last three equations we get $a_3+a_4+a_6+a_8+a_5+a_7 = 0$ and substituting the third, fourth and fifth equation we get:
\[
\left\{\begin{array}{lllll}
a_1=a_3+a_5+a_8 \\
a_2=a_4+a_6+a_7 \\
a_3+a_4=0\\
a_5+a_7=0\\
a_6+a_8=0\\
\end{array}\right.
\]
With this we can put every term as function of $a_4$,$a_7$,$a_8$ as
\[
\left\{\begin{array}{lllll}
a_1=-a_4-a_7+a_8 \\
a_2=a_4-a_8+a_7 \\
a_3=-a_4\\
a_5=-a_7\\
a_6=-a_8\\
\end{array}\right.
\]
So that
\[
s_1 s_2 (a_1+a_2 c_1+a_3 c_2+a_4 c_1 c_2)+s_2 s_1(a_5+a_6 c_1+a_7 c_2+a_8 c_1 c_2)
\]
\[
=\left.\begin{array}{lll}
s_1 s_2 ((-a_4-a_7+a_8)+(a_4-a_8+a_7)c_1-a_4 c_2 + a_4 c_1 c_2) \\
+s_2 s_1(-a_7-a_8 c_1+a_7 c_2+a_8 c_1 c_2)
\end{array}\right.
\]
reorganizing terms,
\[
=\left.\begin{array}{lll}
a_4 s_1 s_2 (-1+c_1-c_2+c_1c_2)+ a_7(s_1 s_2(-1+c_1)+ s_2 s_1(-1+c_2)) \\
+a_8(s_1 s_2(1-c_1)+s_2 s_1(-c_1+c_1c_2))
\end{array}\right.
\]
In the previous expression the three terms multiplying $a_4$,$a_7$,$a_8$ are then a generating set for $\ker \delta_2$ or
\[
\ker \delta_2 =\left.\begin{array}{lll}
\langle s_1 s_2 (c_1-1)(c_2 + 1), s_1 s_2 (c_1-1) + s_2 s_1(c_2-1), \\
\, \, s_1 s_2 (c_1-1) + s_2 s_1 c_1(1-c_2) \rangle &
\end{array}\right.
\]

\textbf{Calculating} $\ker \delta_3$:
\begin{flalign*}
& \delta_3 (s_1 s_2 s_1(a_1+a_2 c_1+a_3 c_2+a_4 c_1 c_2)) = &\\
& \quad \quad =(s_1 s_2 (c_1 - 1) + s_2 s_1(c_2 - 1))(a_1+a_2 c_1+a_3 c_2+a_4 c_1 c_2)
\end{flalign*}
\[
=\left.\begin{array}{lll}
s_1 s_2 ((a_2 - a_1) +(a_1 - a_2)c_1 +(a_4-a_3)c_2 +(a_3-a_4)c_1c_2) \\
 +s_2 s_1((a_3 - a_1) +(a_4-a_2)c_1 +(a_1 - a_3)c_2 +(a_2 - a_4)c_1c_2)
\end{array}\right.
\]
So to get $\ker \delta_3$ we need $a_1=a_2$, $a_3=a_4$ and $a_1=a_3$, $a_2=a_4$ or $a_1=a_2=a_3=a_4$  so
\begin{flalign*}
\ker \delta_3 & = \langle s_1 s_2 s_1 a_1 (1+c_1+c_2+c_1 c_2) \rangle & \\
& = \langle s_1 s_2 s_1 a_1 (c_1+1)(c_2+1) \rangle &
\end{flalign*}
In the following calculations we will write $\B(s_j s_k)$ and $\B(s_j)$ as $s_j s_k$ and $s_j$, respectively.
\begin{flalign*}
\Ima \delta_1 & = \langle c_1-1, c_2-1 , c_1c_2 -c_2,c_1c_2 - c_1 \rangle & \\
& = \langle c_1-1, c_2-1 , c_1c_2 -1 \rangle &
\end{flalign*}
Now, we can calculate the homology group,
\[
H_0=\frac{\ker \delta_0}{\Ima \delta_1} = \frac{\langle 1,c_1,c_2,c_1 c_2 \rangle}{\Ima \delta_1} = \langle 1 \rangle \simeq \Z
\]
\begin{flalign*}
\ker \delta_1 & = \langle s_1(c_1+1), s_1 c_2(c_1+1), s_2 (c_2+1), s_2 c_1(c_2+1), & \\
& \qquad \qquad \, s_1(c_2-1) - s_2(c_1-1) \rangle & \\
& = \langle s_1(c_1+1), s_1 c_2(c_1+1), s_2 (c_2+1), s_2 c_1(c_2+1), & \\
& \qquad \qquad \, s_1(c_2-1) - s_2(c_1-1) +s_2 c_1(c_2+1)\rangle & \\
& = \langle s_1(c_1+1), s_1 c_2(c_1+1), s_2 (c_2+1), s_2 c_1(c_2+1), & \\
& \qquad \qquad \, s_1(c_2-1) + s_2(c_1 c_2+1) \rangle & \\
& = \langle s_1 (c_2+1)(c_1+1),s_1 c_2 (c_1+1), s_2 (c_1+1)(c_2+1), s_2 c_1(c_2+1), & \\
& \qquad \qquad \, s_1(c_2-1) + s_2(c_1 c_2+1) \rangle & \\
& = \langle s_1 (c_2+1)(c_1+1),s_1 c_2 (c_1+1),s_2 (c_1+1)(c_2+1),  & \\
& \qquad \qquad \, s_1 c_2(c_1+1) + s_2 c_1(c_2+1), s_1(c_2-1) + s_2(c_1 c_2+1) \rangle &
\end{flalign*}
Denoting
\[
a = \delta_2 (s_1 s_2) = s_1 (1 - c_2) - s_2 (1 + c_1 c_2)
\]
and
\[
b = \delta_2 (s_2 s_1) = s_2 (1 - c_1) - s_1 (1 + c_1 c_2)
\]
since $a(1+c_2)(1-c_1 c_2) = 0$, $b(1+c_1)(1-c_1 c_2) = 0$ and $a c_1+b c_2 = a+b=-(s_1 c_2(c_1+1) + s_2c_1(c_2+1))$, it follows that
\begin{flalign*}
\Ima \delta_2 & = \langle a, a c_1, a c_2, a c_1 c_2, b, b c_1, b c_2, b c_1 c_2\rangle &\\
& = \langle a, a c_1, a c_2, b, b c_1, b c_2 \rangle &\\
& = \langle a, a c_1, a c_2, b, b c_1 \rangle & \\
& = \langle a, a c_1, a(1+c_2), b, b(1+c_1) \rangle & \\
& = \langle a, a c_1, a(1+c_2), a+b, b(1+c_1) \rangle & \\
& = \langle a, a c_1, s_2 (1 + c_1 c_2)(1+c_2), a+b, s_1 (1 + c_1 c_2)(1+c_1) \rangle & \\
& = \langle a, a c_1, s_2 (1 + c_1)(1+c_2), a+b, s_1 (1 + c_2)(1+c_1) \rangle &
\end{flalign*}
To the 2nd element add the 1st, 3rd, -5th so that
\begin{flalign*}
& \Ima \delta_2 = \langle a, -2s_1c_2(1+c_1), s_2 (1 + c_1)(1+c_2), a+b, s_1 (1 + c_2)(1+c_1) \rangle &
\end{flalign*}
Comparing $\Ima \delta_2$ and $\ker \delta_1$, note that they have 4 equal terms and that the 5th term is double the other, since its easy to check that they are linearly independent. Then, it follows that
\[
H_1=\frac{\ker \delta_1}{\Ima \delta_2} \simeq \Z / 2\Z
\]
\begin{flalign*}
\ker \delta_2 & = \langle s_1 s_2 (c_1-1)(c_2 + 1), s_1 s_2 (c_1-1) + s_2 s_1(c_2-1), & \\
& \qquad \qquad  \, s_1 s_2 (c_1-1) + s_2 s_1 c_1(1-c_2) \rangle & \\
& = \langle s_1 s_2 (c_1-1)(c_2 + 1), s_1 s_2 (c_1-1) + s_2 s_1(c_2-1), & \\
& \qquad \qquad  \, s_1 s_2 (c_1-1) + s_2 s_1 c_1(1-c_2)-(s_1 s_2 (c_1-1) + s_2 s_1(c_2-1)) \rangle & \\
& = \langle s_1 s_2 (c_1-1)(c_2 + 1), s_1 s_2 (c_1-1) + s_2 s_1(c_2-1), & \\
& \qquad \qquad  \, s_2 s_1 (c_1+1)(1-c_2) \rangle &
\end{flalign*}
Denoting
\[
c = \delta_2 (s_1 s_2 s_1) = s_1 s_2 (c_1 - 1) + s_2 s_1 (c_2 - 1
\]
since $\rho(c_1+1)(c_2+1) = 0$, it follows that
\begin{flalign*}
\Ima \delta_3 & = \langle c, c c_1, c c_2,c c_1 c_2 \rangle & \\
& = \langle c, c c_1, c c_2 \rangle & \\
& = \langle c, \rho (c_1+1), \rho (c_2+1) \rangle & \\
& = \langle s_1 s_2 (c_1 - 1) + s_2 s_1 (c_2 - 1),s_2 s_1 (c_2 - 1)(c_1+1), s_1 s_2 (c_1 - 1)(c_2 + 1) \rangle & \\
\end{flalign*}
Note then that $\Ima \delta_3 = \ker \delta_2$ so that
\[
H_2=\frac{\ker \delta_2}{\Ima \delta_3} = 0
\]
\begin{flalign*}
& \ker \delta_3 = \langle s_1 s_2 s_3(c_1+1)(c_2+1)\rangle &
\end{flalign*}
\[
H_3=\frac{\ker \delta_3}{\Ima \delta_4} = \ker \delta_3 \simeq \Z
\]

\end{document}